\DeclareFontFamily{OT1}{pzc}{}
\DeclareFontShape{OT1}{pzc}{m}{it}%
             {<-> s * [1.195] pzcmi7t}{}
\DeclareMathAlphabet{\mathscr}{OT1}{pzc}%
                                 {m}{it}
\newcommand{\tensor}{\otimes}
\newcommand{\colim}{\operatorname{colim}}
\newcommand{\Spec}{\operatorname{Spec}}
\newcommand{\isomt}{{\stackrel{{\scriptscriptstyle{\sim}}}{\;\rightarrow\;}}}
\newcommand{\sma}{{\scriptstyle{\wedge}}}
\newcommand{\coker}{\operatorname{coker}}
\newcommand{\longtwoheadrightarrow}{\relbar\joinrel\twoheadrightarrow}
\renewcommand{\O}{{\mathcal O}}
\renewcommand{\hom}{\operatorname{Hom}}
\newcommand{\real}{{\mathbb R}}
\newcommand{\cplx}{{\mathbb C}}
\newcommand{\Z}{{\mathbb Z}}
\newcommand{\N}{{\mathbb N}}
\newcommand{\A}{{\mathbb A}}
\newcommand{\aone}{{\mathbb A}^1}
\newcommand{\pone}{{\mathbb P}^1}
\newcommand{\gm}[1]{{{\mathbf G}_{m}^{#1}}}
\renewcommand{\L}{{\mathcal L}}
\newcommand{\ho}[1]{\mathscr{H}({#1})}
\newcommand{\hop}[1]{\mathscr{H}_{\bullet}({#1})}
\newcommand{\bpi}{\boldsymbol{\pi}}
\newcommand{\piaone}{{\bpi}^{\aone}}
\newcommand{\Nis}{\operatorname{Nis}}
\newcommand{\Shv}{{\mathscr{Shv}}}
\newcommand{\Sm}{\mathscr{Sm}}
\newcommand{\Spc}{\mathscr{Spc}}
\newcommand{\K}{{{\mathbf K}}}
\newcommand{\hsnis}{\mathscr{H}_s^{\Nis}(k)}
\newcommand{\hspnis}{\mathscr{H}_{s,\bullet}^{\Nis}(k)}
\newcommand{\F}{{\mathcal F}}
\newcounter{intro}
\theoremstyle{plain}
\newtheorem{thm}{Theorem}[subsection]
\newtheorem{lem}[thm]{Lemma}
\newtheorem{cor}[thm]{Corollary}
\newtheorem{prop}[thm]{Proposition}
\newtheorem*{claim*}{Claim}  
\newtheorem*{thm*}{Theorem}
\newtheorem*{problem*}{Problem}
\newtheorem{thmintro}{Theorem}
\newtheorem{conjintro}[thmintro]{Conjecture}
\theoremstyle{definition}
\newtheorem{defn}[thm]{Definition}
\newtheorem{notation}[thm]{Notation}
\theoremstyle{remark}
\newtheorem{rem}[thm]{Remark}
\newtheorem{remintro}[thmintro]{Remark}
\newtheorem{ex}[thm]{Example}
\numberwithin{equation}{subsection}
\begin{document}
\pagestyle{fancy}
\renewcommand{\sectionmark}[1]{\markright{\thesection\ #1}}
\fancyhead{}
\fancyhead[LO,R]{\bfseries\footnotesize\thepage}
\fancyhead[LE]{\bfseries\footnotesize\rightmark}
\fancyhead[RO]{\bfseries\footnotesize\rightmark}
\chead[]{}
\cfoot[]{}
\setlength{\headheight}{1cm}

\author{\begin{small}Aravind Asok\thanks{Aravind Asok was partially supported by National Science Foundation Awards DMS-0900813 and DMS-0966589.}\end{small} \\ \begin{footnotesize}Department of Mathematics\end{footnotesize} \\ \begin{footnotesize}University of Southern California\end{footnotesize} \\ \begin{footnotesize}Los Angeles, CA 90089-2532 \end{footnotesize} \\ \begin{footnotesize}\url{asok@usc.edu}\end{footnotesize} \and \begin{small}Jean Fasel\thanks{Jean Fasel was supported by the DFG Grant SFB Transregio 45.}\end{small} \\ \begin{footnotesize}Fakult\"at Mathematik \end{footnotesize} \\ \begin{footnotesize}Universit\"at Duisburg-Essen, Campus Essen\end{footnotesize} \\ \begin{footnotesize}Thea-Leymann-Strasse 9, D-45127 Essen\end{footnotesize} \\ \begin{footnotesize}\url{jean.fasel@gmail.com}\end{footnotesize}}

\title{{\bf Splitting vector bundles outside the stable range and \\ $\aone$-homotopy sheaves of punctured affine spaces}}
\date{}
\maketitle

\begin{abstract}
We discuss the relationship between the ${\mathbb A}^1$-homotopy sheaves of ${\mathbb A}^n \setminus 0$ and the problem of splitting off a trivial rank $1$ summand from a rank $n$ vector bundle.  We begin by computing $\bpi_3^{\aone}({\mathbb A}^3 \setminus 0)$, and providing a host of related computations of ``non-stable" $\aone$-homotopy sheaves.  We then use our computation to deduce that a rank $3$ vector bundle on a smooth affine $4$-fold over an algebraically closed field having characteristic unequal to $2$ splits off a trivial rank $1$ summand if and only if its third Chern class (in Chow theory) is trivial.  This result provides a positive answer to a case of a conjecture of M.P. Murthy.
\end{abstract}
\setcounter{tocdepth}{1}
\begin{footnotesize}
\tableofcontents
\end{footnotesize}

\section{Introduction}
This paper is motivated in part by the following classical question: if $X$ is a smooth affine variety of dimension $d$ over a field $k$, under what conditions does a rank $r$ vector bundle on $X$ split as the direct sum of a rank $r-1$ vector bundle and a trivial bundle of rank $1$ (briefly: when does a rank $r$ vector bundle split off a trivial rank $1$ bundle)?  The main idea of this paper, which is the third in a series after \cite{AsokFaselSpheres} and \cite{AsokFaselThreefolds}, is to apply $\aone$-homotopy theory to provide some new results regarding this splitting problem.

The answer to the question posed in the previous paragraph depends on the relationship between $r$ and $d$.  For example, in 1958, Serre proved that if $X$ is an affine scheme of Krull dimension $d$, then any vector bundle on $X$ of rank $r > d$ is the direct sum of a vector bundle of rank $d$ and a trivial bundle \cite[Th\'eor\`eme 1]{Serre58}.  If $X$ is an affine curve, then answering the above question when $r = d$ is trivial (in fact, cancellation holds, i.e., stably isomorphic vector bundles are isomorphic). Answering the splitting question for  $r = d \geq 2$ led to a torrent of work.


It follows from  \cite[Theorem 1]{MurthySwan} that if $X$ is a smooth affine surface over an algebraically closed field, then a rank $2$ bundle on $X$ splits off a trivial rank $1$ summand if and only if its second Chern class in $CH^2(X)$ is zero.  In \cite[Corollary 2.4]{KumarMurthy} Murthy and Mohan Kumar showed that if $X$ is a smooth affine threefold over an algebraically closed field, then a rank $3$ bundle $\mathcal{E}$ splits off a trivial rank $1$ summand if and only if $0 = c_3(\mathcal{E}) \in CH^3(X)$.  Subsequently, Murthy \cite[Remark 3.6 and Theorem 3.7]{Murthy94} showed (in particular) that if $X$ is a smooth affine variety of dimension $d$ over an algebraically closed field $k$, and if $\mathcal{E}$ is a rank $d$ bundle on $X$, then $\mathcal{E}$ splits off a trivial rank $1$ summand if and only if $0 = c_d(\mathcal{E}) \in CH^d(X)$.

In \cite[Chapter 8]{MField}, Morel revisited the splitting problem in the setting of $\aone$-homotopy theory.  Using his classification theorem for vector bundles on smooth affine schemes (\cite[Theorem 8.1]{MField}), and techniques of obstruction theory, he was able to recast the splitting problem in terms precisely analogous to the classical topological theory of the Euler class (see, e.g., \cite{MilnorStasheff}).  In particular, he showed that, for smooth varieties of dimension $d$ over an arbitrary perfect field $k$, there is an ``Euler class" obstruction to splitting off a trivial rank $1$ bundle from a rank $r$ bundle with trivial determinant \cite[Theorem 8.14]{MField}; see also \cite[Corollaire 15.3.12]{FaselChowWitt} and \cite[Theorem 38]{FaselSrinivas} for $d=2,3$ (Morel also remarks that for vector bundles with non-trivial determinant \cite[Remark 8.15.2]{MField} it is possible to define a twisted Euler class that obstructs splitting off a trivial rank $1$ summand).  Morel's Euler class is the unique obstruction to splitting off a trivial rank $1$ summand if $r = d$, and if, furthermore, $k$ is algebraically closed, then the Euler class coincides with the top Chern class of the bundle.

Rank $d$ vector bundles on smooth affine $d$-folds are ``at the edge of the stable range".  Indeed, if $k$ is field of $2$-cohomological dimension $\leq 1$, e.g., if $k$ is algebraically closed, it follows from Bhatwadekar's extension \cite{BhatwadekarCancellation} of Suslin's cancellation theorem \cite{Suslin77} that rank $d$ vector bundles are determined by their class in $K_0(X)$.  {\em A priori}, one might not expect to be able to make any reasonable statements about the structure of vector bundles of rank $r<d$. Nevertheless, Murthy wrote that he did not know an example of a vector bundle $\mathcal{E}$ of rank $d-1$ on a smooth affine $d$-fold over an algebraically closed field $k$ such that $c_{d-1}(\mathcal{E}) = 0 \in CH^{d-1}(X)$ that does not split off a trivial rank $1$ summand \cite[p. 173]{Murthy97}.  Following a long established tradition, we reformulate the question implicit in Murthy's statement as a conjecture.

\begin{conjintro}[Murthy's splitting conjecture]
\label{conj:murthy}
If $X$ is a smooth affine $d$-fold over an algebraically closed field $k$ and $\mathcal E$ is a vector bundle of rank $d-1$ over $X$, then ${\mathcal E}$ splits off a trivial rank $1$ bundle if and only if $c_{d-1}(\mathcal E)=0$ in $CH^{d-1}(X)$.
\end{conjintro}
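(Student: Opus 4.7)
The reverse implication is immediate: if $\mathcal E \cong \mathcal E' \oplus \O_X$ with $\mathcal E'$ of rank $d-2$, the Whitney sum formula forces $c_{d-1}(\mathcal E) = c_{d-1}(\mathcal E') = 0$ for dimensional reasons. For the forward direction, my plan is to follow Morel's obstruction-theoretic setup from \cite[Chapter 8]{MField}. Rank $d-1$ vector bundles on the smooth affine $d$-fold $X$ correspond to $\aone$-homotopy classes of maps $X \to B\GL_{d-1}$, and splitting off a trivial rank-$1$ summand amounts to lifting the classifying map along $B\GL_{d-2} \to B\GL_{d-1}$. The $\aone$-homotopy fiber of this map is $\GL_{d-1}/\GL_{d-2} \simeq \A^{d-1}\setminus 0$, so the splitting question reduces to a lifting problem governed by the $\aone$-Postnikov tower of the punctured affine space.

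Since $\A^{d-1}\setminus 0$ is $\aone$-$(d-3)$-connected, the obstructions to such a lift live in Nisnevich cohomology groups $H^i_{\Nis}(X, \piaone_{i-1}(\A^{d-1}\setminus 0))$ for $i \geq d-1$, and on a $d$-fold only the cases $i=d-1$ and $i=d$ can contribute. The primary obstruction is built from $\piaone_{d-2}(\A^{d-1}\setminus 0) \cong \K^{\MW}_{d-1}$ and is Morel's Euler class; over an algebraically closed field the natural map $H^{d-1}_{\Nis}(X, \K^{\MW}_{d-1}) \to CH^{d-1}(X)$ is an isomorphism and identifies this Euler class with $c_{d-1}(\mathcal E)$. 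The hypothesis $c_{d-1}(\mathcal E)=0$ therefore lifts the classifying map to the $(d-2)$nd Postnikov stage, and the entire remaining content is the secondary obstruction in $H^d_{\Nis}(X, \piaone_{d-1}(\A^{d-1}\setminus 0))$.

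Everything hinges on controlling the non-stable sheaf $\piaone_{d-1}(\A^{d-1}\setminus 0)$ and its top-dimensional Nisnevich cohomology on $X$. I would concentrate first on $d=4$, where one needs an explicit calculation of $\piaone_3(\A^3\setminus 0)$---an object lying just beyond the Milnor-Witt stable range and precisely the object of the paper's main computational theorem. A natural strategy is to combine Morel's computations of the $\aone$-homotopy sheaves of motivic spheres with a motivic fiber-sequence/Blakers-Massey analysis relating $\piaone_*(\A^2\setminus 0)$ and $\piaone_*(\A^3\setminus 0)$, then use Gersten-type resolutions to control $H^4_{\Nis}$ of the resulting sheaf. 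The main obstacle is exactly this computation: $\piaone_3(\A^3\setminus 0)$ will contain contributions coming from the motivic Hopf map and from $2$-torsion in Milnor-Witt $K$-theory, so a precise structural description (under the running hypothesis $\operatorname{char}(k)\neq 2$) is essential in order to show either that $H^4_{\Nis}$ vanishes or that the secondary obstruction admits a Chow-Witt description forced to vanish by $c_3(\mathcal E)=0$. For $d>4$ the same framework should in principle apply, but each step requires a successively more delicate computation of $\piaone_{d-1}(\A^{d-1}\setminus 0)$, and these appear progressively less tractable.
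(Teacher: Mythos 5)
The statement you were asked to prove is a \emph{conjecture}, not a theorem of the paper: the authors establish it only for $d=4$ (Theorem \ref{thmintro:murthysplittingdimension4}), with $d\leq 3$ due to earlier work, and the general case is open. You acknowledge this, so your writeup is an outline rather than a proof, and it should be read accordingly. Within those bounds your sketch matches the strategy of Section \ref{section:splittingproblem} in broad strokes: reduce to lifting along $BGL_{d-2}\to BGL_{d-1}$ via Morel's representability theorem, locate obstructions in the $\aone$-homotopy sheaves of the fiber $\A^{d-1}\setminus 0$, identify the primary obstruction with the Euler class (a multiple of $c_{d-1}$ over algebraically closed $k$), and reduce to the secondary obstruction in top-degree Nisnevich cohomology. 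You have, however, omitted an essential technicality: $BGL_{n}$ has $\bpi_1^{\aone}\cong\gm{}$, so the Moore--Postnikov tower is not simple, and every cohomology group you write must carry a local-coefficient twist by $\det\xi$. The primary obstruction lies in $H^{d-1}_{\Nis}(X,\K^{MW}_{d-1}(\det\xi))$ and the secondary in $H^{d}_{\Nis}(X,\piaone_{d-1}(\A^{d-1}\setminus 0)(\det\xi))$. Pinning down exactly which $\gm{}$-action arises, and matching it to the multiplicative action underlying the Grothendieck--Witt twists, is a substantial part of the paper (Subsections \ref{ss:actionsongwsheaves}, \ref{ss:actionsonpi3a3minus0}, \ref{ss:specializingtomurthy}); your proposal sweeps this under the rug.

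Your suggested route to $\piaone_3(\A^3\setminus 0)$ --- a Blakers--Massey-type analysis relating $\piaone_*(\A^2\setminus 0)$ and $\piaone_*(\A^3\setminus 0)$ layered on Morel's sphere computations --- is also not how the paper proceeds, and it is unclear it would yield an answer. The paper instead exploits the isomorphism $\A^3\setminus 0\cong SL_4/Sp_4$ and studies symplectic stabilization $X_n=SL_{2n}/Sp_{2n}\to SL/Sp$, where $SL/Sp$ represents shifted Hermitian $K$-theory. The $\aone$-fiber sequences $X_n\to X_{n+1}\to SL_{2n+1}/SL_{2n}$, Morel's identification $\bpi_{2n}^{\aone}(\A^{2n+1}\setminus 0)\cong\K^{MW}_{2n+1}$, and Karoubi periodicity together produce the short exact sequence $0\to\mathbf{F}_5\to\bpi_3^{\aone}(\A^3\setminus 0)\to\mathbf{GW}^3_4\to 0$. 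This symplectic detour is precisely what makes the Grothendieck--Witt sheaf and the factor $24$ appear structurally, and it, together with the twisting analysis, is the key technology missing from your sketch; without it you have reproduced the setup but not proved even the $d=4$ case, let alone the conjecture in general.
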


With one exception, we were not aware of any general (algebro-geometric) results regarding splitting vector bundles outside the stable range.  In \cite[Theorem 6.6]{AsokFaselThreefolds} we proved that, given an algebraically closed field $k$ and a smooth affine threefold over $k$, there is a unique rank $2$ vector bundle on $X$ with given $c_1$ and $c_2$; consequently, a rank $2$ vector bundle on such a variety splits off a trivial rank $1$ summand if and only if $c_2$ is trivial.  In particular, Conjecture \ref{conj:murthy} is true when $d=3$ under the additional assumption that $k$ has characteristic unequal to $2$.  In this work, we provide a solution to Conjecture \ref{conj:murthy} in case $d=4$, again under the additional assumption that $k$ has characteristic unequal to $2$.

\begin{thmintro}[See Subsections \ref{ss:primaryobstruction} and \ref{ss:secondaryobstruction}]
\label{thmintro:murthysplittingdimension4}
If $X$ is a smooth affine $4$-fold over an algebraically closed field $k$ having characteristic unequal to $2$ and if $\mathcal{E}$ is a rank $3$ vector bundle on $X$, then $\mathcal{E}$ splits off a trivial rank $1$ bundle if and only if $0 = c_3(\mathcal{E}) \in CH^3(X)$.
\end{thmintro}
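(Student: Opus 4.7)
The ``only if'' direction is immediate: if $\mathcal{E} \cong \mathcal{F} \oplus \O$ with $\mathcal{F}$ of rank $2$, then $c_3(\mathcal{E}) = c_3(\mathcal{F}) = 0$. The substantive direction is the converse, and the plan is to recast the splitting problem in the $\aone$-homotopy-theoretic framework described in the introduction. By Morel's classification theorem for vector bundles on smooth affine schemes, $\mathcal{E}$ is classified by a map $\xi : X \to B\GL_3$ in $\ho{k}$, and $\mathcal{E}$ splits off a trivial rank $1$ summand precisely when $\xi$ lifts along the natural map $B\GL_2 \to B\GL_3$. The $\aone$-homotopy fiber of this map is (a twisted form of) $\aone^3 \setminus 0$, so the existence of a lift is governed by a Postnikov-style obstruction theory whose obstruction groups are Nisnevich cohomology groups of $X$ with coefficients in the $\aone$-homotopy sheaves of $\aone^3 \setminus 0$, twisted by $\det \mathcal{E}$.

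Since $X$ has Nisnevich cohomological dimension $\leq 4$ and $\aone^3 \setminus 0$ is $1$-$\aone$-connected, at most two obstructions appear: a primary one in $H^3_{\Nis}(X, \bpi_2^{\aone}(\aone^3 \setminus 0)(\det \mathcal{E}))$ and a secondary one in $H^4_{\Nis}(X, \bpi_3^{\aone}(\aone^3 \setminus 0)(\det \mathcal{E}))$. By Morel, $\bpi_2^{\aone}(\aone^3 \setminus 0) \cong \K^{MW}_3$, so the primary obstruction is a class in $\CH^3(X, \det \mathcal{E})$; this is (essentially by definition) the Chow--Witt Euler class $\tilde e(\mathcal{E})$. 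Its image under the canonical map $\CH^3(X, \det \mathcal{E}) \to CH^3(X)$ is $c_3(\mathcal{E})$, so when $c_3(\mathcal{E}) = 0$ the class $\tilde e(\mathcal{E})$ lies in the image of $H^3(X, I^4(\det \mathcal{E}))$. The first main task (Subsection \ref{ss:primaryobstruction}) is to show this residual contribution must itself be killed. Over an algebraically closed field of characteristic $\neq 2$ the Milnor conjecture identifies the associated graded of the fundamental-ideal filtration on $X$ with mod-$2$ Milnor K-cohomology, and one can exploit this, together with the low $2$-cohomological dimension of $k$ and possibly a modification of $\mathcal{E}$ by a stable isomorphism, to arrange $\tilde e(\mathcal{E}) = 0$.

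Once the primary obstruction is dispatched, $\xi$ admits a lift through the first non-trivial Postnikov stage, and the secondary obstruction lives in $H^4_{\Nis}(X, \bpi_3^{\aone}(\aone^3 \setminus 0)(\det \mathcal{E}))$. This is where the paper's central technical computation enters (Subsection \ref{ss:secondaryobstruction}): one uses the explicit determination of $\bpi_3^{\aone}(\aone^3 \setminus 0)$ advertised in the abstract, which should present this sheaf as an extension built from $\K^{MW}_4$ and an essentially non-stable piece measuring failure of Freudenthal suspension. Armed with this description, one computes or bounds the fourth Nisnevich cohomology of this coefficient sheaf on $X$ and shows either that it vanishes on smooth affine $4$-folds over algebraically closed fields of characteristic $\neq 2$, or that the indeterminacy coming from the choice of primary lift is sufficient to annihilate the secondary obstruction class.

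The main obstacle, as I see it, is precisely this secondary-obstruction analysis: pinning down $\bpi_3^{\aone}(\aone^3 \setminus 0)$ is already a delicate non-stable homotopy-sheaf computation, and extracting the vanishing (or killability) of its fourth cohomology requires a careful understanding of its structure as a strictly $\aone$-invariant sheaf with transfers in conjunction with the hypothesis that $k$ be algebraically closed of characteristic $\neq 2$. Once both obstructions are shown to vanish, standard obstruction theory produces a lift $X \to B\GL_2$, and hence the desired splitting $\mathcal{E} \cong \mathcal{F} \oplus \O$.
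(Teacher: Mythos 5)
Your overall framework — lifting along $BGL_2 \to BGL_3$, obstruction theory on the Moore--Postnikov tower with fiber $\aone^3\setminus 0$, a primary obstruction in $H^3_{\Nis}(X,\bpi_2^{\aone}(\aone^3\setminus 0)(\det\mathcal{E}))$ and a secondary one in $H^4_{\Nis}(X,\bpi_3^{\aone}(\aone^3\setminus 0)(\det\mathcal{E}))$ — agrees with the paper's. But two key steps are handled differently in the paper, and your versions of them have genuine gaps.

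For the primary obstruction, you envision exhibiting the $I^4$-part of $\widetilde{CH}^3(X,\det\mathcal{E})$ as a residual contribution and killing it by a combination of the Milnor conjecture and ``a modification of $\mathcal{E}$ by a stable isomorphism.'' This is off track: the paper invokes \cite[Corollary 5.3]{AsokFaselThreefolds}, which says that for $X$ a smooth affine $(n+2)$-fold over an algebraically closed field of characteristic $\neq 2$ the canonical map $H^{n+1}_{\Nis}(X,\K^{MW}_{n+1}(\det\xi)) \to CH^{n+1}(X)$ is already an \emph{isomorphism}, so there is no residual contribution to dispose of; and no ``stable modification'' of $\mathcal{E}$ enters the argument (it would moreover be unclear how a stable modification helps, since the problem is precisely unstable). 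The paper also treats a step you elide entirely: identifying $o_{2,3}(\xi)$ with (a multiple of) the top Chern class is done by pulling back the universal obstruction class on $BGL_3 \simeq Gr_3$, where $CH^3(Gr_3)\cong\Z$ forces the identification up to an integer factor.

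For the secondary obstruction, your description of $\bpi_3^{\aone}(\aone^3\setminus 0)$ as ``an extension built from $\K^{MW}_4$'' is not what the computation gives. The decomposition is the $\gm{}$-equivariant short exact sequence
\[
0 \longrightarrow \mathbf{F}_5 \longrightarrow \bpi_3^{\aone}(\aone^3\setminus 0) \longrightarrow \mathbf{GW}^3_4 \longrightarrow 0,
\]
with the Grothendieck--Witt sheaf $\mathbf{GW}^3_4$ (not $\K^{MW}_4$) as quotient and $\mathbf{F}_5$, a quotient of $\mathbf{T}_5$ (itself a fiber product of $\mathbf{I}^5$ and $\mathbf{S}_5$ over $\K^M_5/2$), as subsheaf. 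Moreover, you hedge between showing $H^4$ of the coefficient sheaf vanishes and invoking the lift-indeterminacy to kill the secondary class. The paper proves the stronger statement: $H^4_{\Nis}(X, \bpi_3^{\aone}(\aone^3\setminus 0)(\L))=0$ for \emph{every} line bundle $\L$ on $X$. This follows from (a) $H^4_{\Nis}(X,\mathbf{F}_5)=0$, via the surjection from $H^4(X,\mathbf{T}_5)$ and vanishing of $H^4$ with coefficients in $\mathbf{I}^5$ and $\K^M_5/24$ over an algebraically closed field, and (b) $H^4_{\Nis}(X,\mathbf{GW}^3_4(\L))=0$, via the exact sequence $Ch^3(X)\to Ch^4(X)\to H^4_{\Nis}(X,\mathbf{GW}^3_4(\L))\to 0$ together with divisibility of $CH^4(X)$ for a smooth affine $4$-fold over an algebraically closed field. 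So no analysis of indeterminacy is required, and the invocation of ``transfers'' in your last paragraph is not part of the argument.
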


This result and the one mentioned in the previous paragraph were deduced by studying the link between the splitting problem and $\aone$-homotopy theory.  To explain this relation, write $BGL_n$ for the classifying space for $GL_n$-torsors (the reader is encouraged to think of an appropriate infinite Grassmannian).  Write $\ho{k}$ for the Morel-Voevodsky $\aone$-homotopy category.  Given any smooth scheme $X$, we write $[X,BGL_n]_{\aone}$ for the set of morphisms in $\ho{k}$ from $X$ to $BGL_n$.  Morel showed in \cite[Theorem 8.1]{MField} that the set $[X,BGL_n]_{\aone}$ is canonically in bijection with the set $\mathscr{V}_n(X)$ of isomorphism classes of rank $n$ vector bundles on $X$ (provided $X$ is affine).

There is a morphism $BGL_{n-1} \to BGL_n$ corresponding to the inclusion map $GL_{n-1} \to GL_n$ sending an invertible matrix $M$ to the block diagonal $n \times n$-matrix with blocks $M$ and $1$.  This morphism induces a map $[X,BGL_{n-1}]_{\aone} \to [X,BGL_{n}]_{\aone}$ that sends the isomorphism class of a rank $n-1$ vector bundle $\mathcal{E}$ to the isomorphism class of the rank $n$ vector bundle $\mathcal{E} \oplus \O_X$.  Therefore, the splitting problem is equivalent to the following lifting question: given an element of $[X,BGL_n]_{\aone}$, when can it be lifted along the morphism $BGL_{n-1} \to BGL_n$ to an element of $[X,BGL_{n-1}]_{\aone}$?  Standard obstruction theoretic techniques from algebraic topology, adapted to the setting of $\aone$-homotopy theory, show that the obstructions to existence of such a lift are governed by the structure of the ($\aone$-)homotopy fiber of the above map $BGL_{n-1} \to BGL_n$. Morel explicitly identified this $\aone$-homotopy fiber by proving the existence of an $\aone$-fiber sequence \cite[Proposition 8.11]{MField} of the form:
\[
GL_n/GL_{n-1} \longrightarrow BGL_{n-1} \longrightarrow BGL_n.
\]
The left-hand morphism is the classifying map of the $GL_{n-1}$-torsor $GL_n\to GL_{n}/GL_{n-1}$. Moreover, the space $GL_n/GL_{n-1}$ is $\aone$-weakly equivalent to ${\mathbb A}^n\setminus 0$ which is a sphere in $\ho k$.  Alternatively, $BGL_{n-1}$ can be identified, up to $\aone$-weak equivalence, with the complement of the zero section of the tautological vector bundle over $BGL_n$, and the left hand map can be thought of, again up to $\aone$-weak equivalence, as the inclusion of the fiber over the base-point.

By obstruction theory, understanding the lifting question is then tantamount to understanding the (unstable) $\aone$-homotopy theory of ${\mathbb A}^n \setminus 0$.  To provide a positive answer to Murthy's question for a given integer $d$, the above approach requires as input sufficiently detailed information about the $d-1$st $\aone$-homotopy sheaf of ${\mathbb A}^{d-1} \setminus 0$.  In particular, in \cite[Theorem 3]{AsokFaselThreefolds} we computed $\bpi_2^{\aone}({\mathbb A}^2 \setminus 0)$.  In this paper, we deduce Theorem \ref{thmintro:murthysplittingdimension4} from a computation of $\bpi_3^{\aone}({\mathbb A}^3 \setminus 0)$.

\begin{thmintro}[See Theorem \ref{thm:pi3a3minus0} and Lemma \ref{lem:contractedT5}]
\label{thmintro:pi3a3minus0}
If $k$ is an infinite perfect field having characteristic unequal to $2$, there is a short exact sequence of Nisnevich sheaves of abelian groups of the form
\[
0 \longrightarrow {\mathbf F}_5 \longrightarrow \bpi_3^{\aone}({\mathbb A}^3 \setminus 0) \longrightarrow \mathbf{GW}^3_4 \longrightarrow 0,
\]
where $\mathbf{GW}^3_4$ is a sheafification of a certain Karoubi $U$-theory group for the Nisnevich topology, and $\mathbf{F}_5$ is a quotient of the sheaf ${\mathbf T}_5$ introduced in \cite[\S 3.6]{AsokFaselSpheres} (see Section \ref{section:metastablecomputations} for more details). Moreover, the epimorphism ${\mathbf T}_5 \to {\mathbf F}_5$ becomes an isomorphism after $4$-fold contraction.
\end{thmintro}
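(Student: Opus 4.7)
The plan is to decompose $\bpi_3^{\aone}(\mathbb{A}^3\setminus 0)$ into a stable quotient and an unstable correction, mirroring the classical picture of unstable homotopy groups at the edge of the stable range. Recall that $\mathbb{A}^3\setminus 0$ is $\aone$-weakly equivalent to the motivic sphere $S^2_s\wedge\mathbb{G}_m^{\wedge 3}$, so by Morel's connectivity and Hurewicz theorems the first nontrivial $\aone$-homotopy sheaf is $\bpi_2^{\aone}(\mathbb{A}^3\setminus 0) = \mathbf{K}^{\MW}_3$, and we expect $\bpi_3^{\aone}$ to combine a stable contribution with a metastable correction measuring the failure of Freudenthal suspension.

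The first step is to stabilize. Applying simplicial suspension and using Morel's motivic Freudenthal theorem yields a natural map from $\bpi_3^{\aone}(\mathbb{A}^3\setminus 0)$ to the corresponding stable motivic homotopy sheaf which is surjective at the edge of the stable range. Using the identification of low-dimensional stable motivic homotopy groups with hermitian K-theory (extending Morel's identification of the zeroth stable motivic stem with Milnor-Witt K-theory), this stable quotient can be identified with $\mathbf{GW}^3_4$. This produces the surjection appearing as the right-hand map in the displayed sequence.

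The second step, which is the main obstacle, is to describe the kernel of stabilization. By a motivic James--Hopf / homotopy excision argument, this kernel is measured by homotopy sheaves of the double smash $(\mathbb{A}^3\setminus 0)^{\wedge 2}$ in the metastable range; this is precisely the input computed in \cite[\S 3.6]{AsokFaselSpheres}, whose output is the sheaf $\mathbf{T}_5$. The unstable kernel is then a specific quotient $\mathbf{F}_5$ of $\mathbf{T}_5$, the extra relations encoding Whitehead products in $\bpi_*^{\aone}(\mathbb{A}^3\setminus 0)$ that become trivial in the double smash but must be quotiented out of the unstable group. Constructing the James--Hopf invariant motivically and controlling its image and kernel at exactly this level is the delicate point of the proof.

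Finally, for the contraction claim, I would compute the kernel $K$ of the surjection $\mathbf{T}_5\twoheadrightarrow\mathbf{F}_5$ directly from the explicit generators recorded in \cite[\S 3.6]{AsokFaselSpheres}. The contraction functor shifts indices of Milnor--Witt K-sheaves in a controlled way, and a direct bookkeeping argument should show that after four contractions the generators of $K$ become identified with elements already trivial in the relevant base sheaf, so that $\mathbf{T}_5\to\mathbf{F}_5$ becomes an isomorphism as claimed.
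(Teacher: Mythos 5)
Your proposal diverges from the paper at the very first step, and the divergence is a genuine gap rather than an alternate route. You propose to produce the surjection $\bpi_3^{\aone}(\mathbb{A}^3\setminus 0)\twoheadrightarrow\mathbf{GW}^3_4$ by applying simplicial suspension and Morel's Freudenthal theorem, appealing to an ``identification of low-dimensional stable motivic homotopy groups with hermitian K-theory.'' But the target of the suspension stabilization is the first \emph{motivic stable stem}, not a hermitian K-theory sheaf: there is no identification of $\pi_1^{s,\aone}$ of the motivic sphere spectrum with $\mathbf{GW}^3_4$, and no extension of Morel's $\pi_0^{s,\aone}\cong\K^{MW}_0$ to that effect. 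The paper instead identifies $\mathbb{A}^3\setminus 0$, up to $\aone$-weak equivalence, with $X_2=SL_4/Sp_4$ and stabilizes along the \emph{geometric} tower $X_n=SL_{2n}/Sp_{2n}\to X_{n+1}\to\cdots\to SL/Sp$. Geometric representability of hermitian K-theory (Proposition~\ref{prop:geometricGW3}) gives $\bpi_i^{\aone}(SL/Sp)\cong\mathbf{GW}^3_{i+1}$, and the $\aone$-fiber sequence $X_n\to X_{n+1}\to SL_{2n+1}/SL_{2n}\simeq\mathbb{A}^{2n+1}\setminus 0$ (Proposition~\ref{prop:connectivityofSL2nSp2ntoSLSp}) provides the long exact sequence that both shows the stabilization is $(2n-2)$-connected and exhibits $\bpi_{2n-1}^{\aone}(X_n)$ as an extension of $\mathbf{GW}^3_{2n}$ by $\mathbf{F}_{2n+1}$. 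That stabilization map and the suspension map are not the same thing, and substituting one for the other does not produce the right quotient.

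The second step also misattributes the origin of $\mathbf{T}_5$. In the cited reference, $\mathbf{T}_{2n+1}$ is defined as the cokernel of a morphism $\psi'_{2n+1}\colon\K^Q_{2n+1}\to\K^{MW}_{2n+1}$ arising from the stabilization problem for $SL_n$, not as a homotopy sheaf of a double smash product detected by a James--Hopf invariant; a motivic James--Hopf/EHP machinery at this level of generality was not available and is not used. The paper's argument is considerably more direct: the commutative triangle~(\ref{eqn:triangle2}), i.e., $\K^Q_{2n+1}\xrightarrow{H_{3,2n+1}}\mathbf{GW}^3_{2n+1}\xrightarrow{\chi_{2n+1}}\K^{MW}_{2n+1}$ with composite $\psi'_{2n+1}$, immediately gives $\operatorname{Im}(\psi'_{2n+1})\subseteq\operatorname{Im}(\chi_{2n+1})$, hence an epimorphism $\mathbf{T}_{2n+1}=\operatorname{coker}(\psi'_{2n+1})\twoheadrightarrow\operatorname{coker}(\chi_{2n+1})=\mathbf{F}_{2n+1}$. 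Your plan to construct and control a motivic Hopf invariant here is both an overreach and a misreading of where $\mathbf{T}_5$ comes from.

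Finally, the contraction statement is not a matter of bookkeeping with ``explicit generators of the kernel.'' The kernel of $\mathbf{T}_{2n+1}\to\mathbf{F}_{2n+1}$ is $\operatorname{Im}(\chi_{2n+1})/\operatorname{Im}(\psi'_{2n+1})$, and the point of Lemma~\ref{lem:contractedT5} is that after $2n$-fold contraction the hyperbolic morphism becomes the map $H_{3,1}\colon\K^Q_1\to\mathbf{GW}^3_1$, which is surjective (indeed an isomorphism) on sections over fields. Once $H_{3,1}$ is an epimorphism of strictly $\aone$-invariant sheaves, the two images coincide and so do the two cokernels. Without this surjectivity input your argument has no traction; it is the essential content, not a triviality to be checked generator by generator.
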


The word contraction in the theorem refers to a well-known construction of Voevodsky. If $\F$ is a sheaf of abelian groups, then its contraction $\F_{-1}$ is defined by $\F_{-1}(X):=\ker \F(X\times\gm{})\to \F(X)$ where the map is induced by the morphism $X\to X\times \gm {}$ defined by $x\mapsto (x,1)$.

\begin{remintro}
To be clear, this computation actually goes beyond Murthy's splitting conjecture.  If $X$ is a smooth affine $d$-fold over an infinite perfect field $k$ having characteristic unequal to $2$, and $\mathcal{E}$ is a rank $d-1$ vector bundle on $X$, then there is a primary ``Euler class" obstruction to splitting off a trivial rank $1$ summand from $\mathcal{E}$.  In the special case where $d = 4$, when this primary obstruction vanishes, our computations involving homotopy sheaves yield a secondary characteristic class whose vanishing is necessary and sufficient to establish the existence of a splitting.  In fact, the proof of Theorem \ref{thmintro:murthysplittingdimension4} amounts to showing, under the additional hypothesis that $k$ is algebraically closed, that i) the primary obstruction can be identified with the vanishing of the third Chern class, and ii) the secondary characteristic class always vanishes; we refer the reader to the beginning of Section \ref{section:splittingproblem} and Remark \ref{rem:nonalgebraicallyclosedsplitting} for a discussion of this point of view.  It would be interesting to know whether this secondary characteristic class can be described using cohomology operations. We pursue this approach in \cite{Asok13c}.
\end{remintro}

\begin{remintro}
The assumption that $k$ has characteristic unequal to $2$ in Theorem \ref{thmintro:murthysplittingdimension4} comes by way of Theorem \ref{thmintro:pi3a3minus0} and our appeal to the machinery of Grothendieck-Witt theory in its proof.  At the moment, we do not know how to remove this assumption, and thus Conjecture \ref{conj:murthy} remains open in this case.
\end{remintro}

As mentioned earlier, the space ${\mathbb A}^d \setminus 0$ is a motivic sphere, and the computation above, together with the parallels in algebraic topology, hint at an extraordinarily rich structure in its unstable $\aone$-homotopy sheaves.  The results above exemplify how this structure is reflected in the splitting problem for projective modules.  We draw the reader's attention to some tantalizing features of the above computation.  The Grothendieck-Witt sheaf $\mathbf{GW}^3_4$ that appears corresponds to the part of the $\aone$-homotopy sheaf detected by the ``degree" homomorphism in Hermitian K-theory, though we defer a detailed explanation of this connection to a subsequent paper \cite{AsokFaselKMW3}. On the other hand, the kernel of the surjective map to the Grothendieck-Witt sheaf is closely related to the motivic version of the classical $J$-homomorphism.  In a sense we will make precise (see Proposition \ref{prop:gwmodulestructureofpi35a3minus0} and Corollary \ref{cor:complexrealization}), the $24$ that appears is the ``same" $24$ that intervenes in the third stable homotopy group of the classical sphere spectrum (see \cite[Theorem 16.4]{HuHomotopy}): our computation therefore mixes together topological information about the homotopy groups of spheres and arithmetic information about the base-field and its finitely generated extensions!

The sheaf ${\mathbf I}^5$ contained in the sheaf ${\mathbf T}_5$ appearing in Theorem \ref{thmintro:pi3a3minus0} appears to be a purely unstable phenomenon (see Corollary \ref{cor:realrealization} and Remark \ref{rem:i5isalowdimensionalaccident}); detailed analysis of this phenomenon is deferred to \cite{AsokFaselSSS}.  Up to this contribution from $\mathbf{I}^5$, the sheaf $\bpi_3^{\aone}({\mathbb A}^3 \setminus 0)$ is an extension of two sheaves that are of ``stable" provenance (in the sense of stable $\aone$-homotopy theory \cite[\S 5]{MIntro}).  While we cannot yet compute the groups $\bpi_d^{\aone}({\mathbb A}^d \setminus 0)$ for $d > 3$, based on the analogy with classical unstable homotopy groups of spheres, we still expect these sheaves to stabilize in a suitable sense: they should be an extension of a (subsheaf of a) Grothendieck-Witt sheaf by an appropriate Milnor K-theory sheaf modulo $24$.  A precise conjecture on the structure of $\bpi_d^{\aone}({\mathbb A}^d \setminus 0)$ is given in \cite{AsokFaselOberwolfach}.  Moreover, the phenomenon that $\bpi_d^{\aone}({\mathbb A}^d \setminus 0)$ is an extension of two ``stable" pieces in the known examples, together with computations from classical unstable homotopy theory \cite{Toda,Mahowald}, hint at the existence of a meta-stable range for $\aone$-homotopy sheaves of ${\mathbb A}^d \setminus 0$.

\subsubsection*{Detailed description of contents}
The computation of $\bpi_3^{\aone}({\mathbb A}^3 \setminus 0)$ involves a number of ingredients, some of which are established in greater generality than actually required for the applications to projective modules envisioned in this paper.  Section \ref{section:preliminaries} begins with a short survey of some terminology and results from $\aone$-homotopy theory.  Section \ref{section:geometricbottperiodicity} quickly reviews aspects of the theory of Grothendieck-Witt groups, including geometric representability results in the $\aone$-homotopy category and closes with a cohomology computation; this computation is one of the key ingredients in the proof of Theorem \ref{thmintro:murthysplittingdimension4}.

Section \ref{section:metastablecomputations} begins by establishing a ``stable range" for the $\aone$-homotopy sheaves of $GL_{2n}/Sp_{2n}$ in the sense of \cite{BottPeriodicity}.  The main goal of the section is then to compute the first non-stable $\aone$-homotopy sheaf of $GL_{2n}/Sp_{2n}$ and to provide a brief comparison with corresponding results in classical algebraic topology.  Our use of the terminology ``non-stable" for the homotopy sheaves under consideration follows \cite{Kervaire} and is meant to indicate that we are outside the ``stable range" mentioned previously.  The computation of the first non-stable homotopy sheaf of $GL_{4}/Sp_{4}$ (really $SL_4/Sp_4$) effectively yields Theorem \ref{thmintro:pi3a3minus0}, but the result in the text (i.e., Theorem \ref{thm:pi3a3minus0}) contains a discussion of some finer points of equivariant structures that intercede in subsequent cohomological computations.

Section \ref{section:hopfmap} is devoted to analyzing the computation of $\bpi_3^{\aone}({\mathbb A}^3 \setminus 0)$ in greater detail. The fiber sequences of Section \ref{section:metastablecomputations} identify $\bpi_3^{\aone}({\mathbb A}^3 \setminus 0)$ as an extension of a Grothendieck-Witt sheaf by the sheaf we called ${\mathbf F}_5$ above.  The main goal of this section is to understand the origins of the ${\mathbf F}_5$ factor.  In a sense we make more precise, the factor of ${\mathbf F}_5$ is ``generated" by a map we call $\delta$.

Finally, Section \ref{section:splittingproblem} studies the problem of splitting a trivial rank $1$ summand off a vector bundle by means of the techniques of obstruction theory.  We first identify the primary obstruction to splitting in Proposition \ref{prop:eulerclassalgclosedisachernclass}.  For rank $d-1$ vector bundles on a smooth affine $d$-fold, we formulate a general cohomological vanishing conjecture that implies Murthy's splitting conjecture in Corollary \ref{cor:murthysconjectureisvanishing}.  Theorem \ref{thmintro:murthysplittingdimension4} is then proven by establishing the vanishing theorem alluded to above in the case $d = 4$.

\subsubsection*{Acknowledgements}
The authors would like to thank Marco Schlichting and Girja Tripathi for making available preliminary versions of their work on geometric representability of Hermitian K-theory, and V. Srinivas for helpful questions.  The first author would also like to thank Brent Doran for many discussions about homotopy theory of quadrics, which led to a better understanding of the sheaf $\mathbf{F}_5$, and Fabien Morel for an explanation several years ago as to why Milnor K-theory modulo $24$ should appear in the first stable $\aone$-homotopy sheaf of the motivic sphere spectrum.  Special thanks are due to the referees of this paper for a very careful reading and numerous comments that helped improve the exposition.

\section{Preliminaries and notation}\label{section:preliminaries}
In this section, we discuss some preliminary results that will be used in the sequel.  Fix a field $k$, which, throughout the paper, we will assume to be perfect; the latter hypothesis is necessitated by our repeated appeals to Theorem \ref{thm:morelaoneinvariance}.  The careful reader might also want to assume that $k$ is infinite because the current proofs of Theorem \ref{thm:morelaoneinvariance} rely on \cite[Lemma 1.15]{MField}, which states without proof a generalization of Gabber's presentation lemma, a result that normally assumes $k$ infinite.  In some theorem statements, e.g., those appealing implicitly or explicitly to Grothendieck-Witt theory, further assumptions on $k$, e.g., that it have characteristic unequal to $2$ will be made.  Write $\Sm_k$ for the category of schemes that are smooth, separated and have finite type over $\Spec k$.

\subsection{Preliminaries on $\aone$-homotopy theory}
\label{ss:preliminaries}
Set $\Spc_k := \Delta^{\circ}\Shv_{\Nis}(\Sm_k)$ (resp. $\Spc_{k,\bullet}$) for the category of (pointed) simplicial sheaves on the site of smooth schemes equipped with the Nisnevich topology; objects of this category will be referred to as {\em (pointed) $k$-spaces}, or simply as {\em (pointed) spaces} if $k$ is clear from the context.  Write $\hsnis$ (resp $\hspnis$) for the (pointed) Nisnevich simplicial homotopy category: this category can be obtained as the homotopy category of, e.g., the injective local model structure on $\Spc_k$ (see, e.g., \cite{MV} for details).  Write $\ho{k}$ (resp. $\hop{k}$) for the associated $\aone$-homotopy category, which is constructed as a Bousfield localization of $\hsnis$ (resp. $\hspnis$).

Equivalently, the category $\ho{k}$ can be obtained as the homotopy category of the ``motivic" model structure on the category of simplicial presheaves on $\Sm_k$ described in \cite[\S 2]{DundasOestvaerRoendigs}; this equivalent presentation will be useful at some points.  In particular, in the motivic model structure by \cite[Corollary 2.16]{DundasOestvaerRoendigs}, motivic weak equivalences and motivic fibrations with motivically fibrant codomains are closed under filtered colimits.  Thus, in this setting, formation of homotopy fibers commutes with formation of filtered colimits.

Given two (pointed) $k$-spaces $(\mathscr{X},x)$ and $(\mathscr{Y},y)$, we set $[\mathscr{X},\mathscr{Y}]_{s} := \hom_{\hsnis}(\mathscr{X},\mathscr{Y})$, $[(\mathscr{X},x),(\mathscr{Y},y)]_s := \hom_{\hspnis}((\mathscr{X},x),(\mathscr{Y},y))$, $[\mathscr{X},\mathscr{Y}]_{\aone} := \hom_{\ho{k}}(\mathscr{X},\mathscr{Y})$, and $[(\mathscr{X},x),(\mathscr{Y},y)]_{\aone} := \hom_{\hop{k}}((\mathscr{X},x),(\mathscr{Y},y))$.  Sometimes, for notational compactness it will be helpful to suppress base-points from the notation.  We write $S^i_s$ for the constant sheaf on $\Sm_k$ associated with the simplicial $i$-sphere, and $\gm{}$ will always be pointed by $1$.

If $\mathcal{X}$ is any space, the sheaf of $\aone$-connected components $\bpi_0^{\aone}({\mathcal X})$ is the Nisnevich sheaf associated with the presheaf $U \mapsto [U,\mathcal{X}]_{\aone}$.  More generally, the $\aone$-homotopy sheaves of a pointed space $(\mathscr{X},x)$, denoted $\bpi_i^{\aone}(\mathscr{X},x)$ are defined as the Nisnevich sheaves associated with the presheaves $U \mapsto [S^i_s \wedge U_+,(\mathscr{X},x)]_{\aone}$.  We also write $\bpi_{i,j}^{\aone}({\mathscr X},x)$ for the Nisnevich sheafification of the presheaf $U \mapsto [S^i_s \wedge \gm{\wedge j} \wedge U_+,({\mathscr X},x)]_{\aone}$.  We caution the reader that this convention for bigraded homotopy sheaves differs from some others in the literature.  For $i = j = 0$, the morphism of sheaves $\bpi_{0,0}^{\aone}(\mathcal{X},x) \to \bpi_0^{\aone}({\mathcal X})$ obtained by ``forgetting the base-point" is an isomorphism.

\subsection{Strictly $\aone$-invariant sheaves}
\label{ss:strictlyaoneinvariant}
Recall from \cite[Definition 1.7]{MField} that a presheaf of sets $\F$ on $\Sm_k$ is called {\em $\aone$-invariant} if for any smooth $k$-scheme $U$ the morphism $\F(U) \to \F(U \times \aone)$ induced by pullback along the projection $U \times \aone \to U$ is a bijection.  A Nisnevich sheaf of groups $\mathcal{G}$ is called {\em strongly $\aone$-invariant} if the cohomology presheaves $H^i_{\Nis}(\cdot,\mathcal{G})$ are $\aone$-invariant for $i = 0,1$.  A Nisnevich sheaf of abelian groups ${\mathbf A}$ is called {\em strictly $\aone$-invariant} if the cohomology presheaves $H^i_{\Nis}(\cdot,{\mathbf A})$ are $\aone$-invariant for every $i \geq 0$. The full-subcategory of the category of Nisnevich sheaves of abelian groups on $\Sm_k$ whose objects are strictly $\aone$-invariant sheaves is abelian by \cite[Lemma 6.2.13]{MStable}.  Typically, strictly $\aone$-invariant sheaves will be indicated in boldface.

If ${\mathbf A}$ is strictly $\aone$-invariant, it is necessarily unramified in the sense of \cite[Definition 2.1]{MField}.  It follows from, e.g.,  \cite[Theorem 2.11]{MField}, that if $f: \mathbf{A} \to \mathbf{A}'$ is a morphism of strictly $\aone$-invariant sheaves, then $f$ is an isomorphism (or epimorphism or monomorphism) if and only if the induced morphism on sections over finitely generated field extensions of the base field is an isomorphism (or epimorphism or monomorphism).

\begin{thm}[{\cite[Theorems 1.9 and 1.16]{MField}}]
\label{thm:morelaoneinvariance}
Suppose $k$ is a perfect field, and $(\mathcal{X},x)$ is any pointed $k$-space.  The sheaves $\bpi_i^{\aone}(\mathcal{X},x)$ are strongly $\aone$-invariant for $i \geq 1$.  A strongly $\aone$-invariant sheaf of abelian groups is strictly $\aone$-invariant; in particular, $\bpi_i^{\aone}(\mathcal{X},x)$ is strictly $\aone$-invariant for $i \geq 2$.
\end{thm}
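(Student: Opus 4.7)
The plan is to establish the two assertions by separate but related arguments. For the first assertion, the strategy is to construct an explicit $\aone$-fibrant replacement of a pointed space and read off its homotopy sheaves from that model. Starting from a Nisnevich-fibrant $(\mathcal{X}, x)$, I would iterate the Suslin--Voevodsky singular construction
\[
\operatorname{Sing}^{\aone}_*(\mathcal{X})_n(U) = \mathcal{X}_n(U \times \Delta^n_k)
\]
together with a Nisnevich-local fibrant replacement; the compatibility of motivic weak equivalences with filtered colimits recalled in Subsection \ref{ss:preliminaries} ensures that this procedure converges to a motivically fibrant object. The crux is to verify that $\operatorname{Sing}^{\aone}_*$ preserves Nisnevich-local weak equivalences and Nisnevich-local fibrations; this step uses Gabber's presentation lemma, generalized in \cite[Lemma 1.15]{MField}, and is the source of the perfectness hypothesis on $k$. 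Granting it, $\bpi_i^{\aone}(\mathcal{X}, x)$ is identified with the Nisnevich sheafification of $U \mapsto \pi_i(\operatorname{Sing}^{\aone}_*(\mathcal{X})(U))$, and the cosimplicial structure on $\Delta^\bullet_k$ supplies an explicit simplicial $\aone$-homotopy exhibiting $\aone$-invariance of these presheaves. For $i \geq 1$, the upgrade from $\aone$-invariance to strong $\aone$-invariance comes by applying the same argument to a simplicial model of the classifying space of $\bpi_1^{\aone}(\mathcal{X}, x)$ in order to control $H^1_{\Nis}$.

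For the second assertion, I would show that if $\mathbf{A}$ is a strongly $\aone$-invariant sheaf of abelian groups, then each Eilenberg--MacLane space $K(\mathbf{A}, n)$ is $\aone$-local; since $H^n_{\Nis}(-, \mathbf{A}) = [-, K(\mathbf{A}, n)]_s$, $\aone$-locality in all degrees is equivalent to $\aone$-invariance of cohomology in all degrees. The case $n = 1$ is precisely the hypothesis. For the inductive step one cannot naively invoke the path--loop fibration $K(\mathbf{A}, n-1) \to \ast \to K(\mathbf{A}, n)$, because compatibility of $\aone$-localization with such fibrations is essentially what is at stake. Instead, following Morel, I would construct a Gersten/Rost--Schmid-type flasque resolution of $\mathbf{A}$ whose terms are assembled, codimension by codimension, from iterated contractions
\[
\mathbf{A}_{-j}(F) = \ker\bigl(\mathbf{A}(F \otimes_k k[t, t^{-1}]) \longrightarrow \mathbf{A}(F)\bigr)
\]
evaluated at residue fields. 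This complex computes $H^*_{\Nis}(X, \mathbf{A})$. I would then prove that (i) each $\mathbf{A}_{-j}$ is again strongly $\aone$-invariant, and (ii) the Rost--Schmid complex for $X \times \aone$ admits an explicit chain-level $\aone$-null-homotopy comparing pullback with the zero section, giving $\aone$-invariance of cohomology term by term.

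The main obstacle is the construction and analysis of this Rost--Schmid complex, together with the inductive preservation of strong $\aone$-invariance under contraction. Producing a flasque resolution with prescribed behavior on essentially smooth Henselian local schemes requires a delicate analysis of \'etale-local geometry near closed subschemes, which again reduces to Gabber's presentation lemma. This is what makes the second assertion substantially deeper than the first, and it is why the hypothesis that $k$ be perfect cannot easily be weakened.
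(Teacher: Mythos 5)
This statement is not proved in the paper: it is cited verbatim from Morel's book (\cite[Theorems 1.9 and 1.16]{MField}), and the authors take it as a black box. So there is no ``paper's own proof'' to compare against; what you have written is an attempt to reconstruct Morel's argument from scratch.

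Your reconstruction gestures at several of the right ingredients — the $\operatorname{Sing}^{\aone}_*$ construction, Gabber's presentation lemma, the Rost--Schmid complex, and preservation of strong $\aone$-invariance under contraction — but it contains a genuine gap and misrepresents the logical structure of Morel's proof. The claim that ``the crux is to verify that $\operatorname{Sing}^{\aone}_*$ preserves Nisnevich-local weak equivalences and Nisnevich-local fibrations'' is both false in general and internally inconsistent with the rest of your sketch: if $\operatorname{Sing}^{\aone}_*$ did preserve local fibrations and weak equivalences after a single application of local fibrant replacement, there would be no need to iterate the construction at all, yet iteration is precisely what you invoke. More to the point, Morel's proof of strong $\aone$-invariance of $\bpi_i^{\aone}$ for $i \geq 1$ does not proceed by showing the singular construction converges fast; instead, it is an elaborate induction through the $\aone$-Postnikov tower whose inductive step already requires the unramified-sheaf machinery and cohomological control that go into the second assertion. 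In particular, the two halves of your sketch cannot be run as independent arguments: Morel's Theorem 1.9 and Theorem 1.16 are proved in tandem, with the strong-to-strict upgrade feeding back into the induction that establishes strong $\aone$-invariance in the first place. Your outline of the second half is closer to Morel's actual strategy, but it too omits the hard part: constructing the Rost--Schmid complex with the required flasqueness properties and verifying the axiomatics of unramified sheaves, rather than merely asserting a ``chain-level null-homotopy'' whose existence is the content of the theorem.

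The honest assessment is that this theorem is one of the deepest inputs to the paper and the authors deliberately outsource it; reconstructing it from first principles is a project on the scale of Morel's book, and your sketch, while pointing in roughly the right direction, does not close the gap.
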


\begin{ex}[Quillen K-theory]
\label{ex:quillenktheory}
For any $i\geq 0$, let $\K^Q_i$ be the Nisnevich sheaf associated with the presheaf $U \mapsto K_i(U)$, where $K_i(U)$ is the Quillen K-theory group of the smooth $k$-scheme $U$. The sheaves $\K^Q_i$ are strictly $\aone$-invariant: this can be seen in a number of ways, but, e.g., it follows from Theorem \ref{thm:morelaoneinvariance} and the representability of Quillen K-theory in $\ho{k}$, i.e., \cite[\S 4 Theorem 3.13]{MV}.
\end{ex}

\begin{ex}[Unramified sheaves]
\label{ex:unramfiedmilnorktheory}
If $U$ is a smooth scheme, with function field $k(U)$, we write $\K^M_i(U)$ for the subgroup of $K^M_i(k(U))$ consisting of elements that are unramified at each codimension $1$ point of $U$ (see, e.g., \cite[\S 2.2]{MMilnor} for more details).  The assignment $U \mapsto \mathbf{K}^M_i(U)$ defines a sheaf $\K^M_i$ for the Nisnevich topology.  That the sheaf $\K^M_i$ is strictly $\aone$-invariant is proven in \cite[Corollary 6.5, Proposition 8.6]{Rost96}.  It follows that $\K^M_i/n$, which is the cokernel of the multiplication by $n$ map, is again a strictly $\aone$-invariant sheaf.
\end{ex}

\begin{ex}[Unramified sheaves related to Witt groups]
\label{ex:unramifiedwitt}
In the same spirit, the presheaf $U \mapsto {W}_{ur}(U)$ of unramified Witt groups is a sheaf for the Nisnevich topology, which is strictly $\aone$-invariant by \cite[Theorem 1.1]{Panin10}; we write $\mathbf{W}$ for this sheaf. For any integer $n$, the assignments $U \mapsto I^n_{ur}(U) \subset {W}_{ur}(U)$ also define Nisnevich sheaves $\mathbf{I}^n$ (when $n = 1$, we drop the superscript); we refer to these sheaves as unramified powers of the fundamental ideal, and they are strictly $\aone$-invariant by \cite[Th\'eor\`eme 11.2.9]{FaselChowWitt}.
\end{ex}

\begin{ex}[Unramified Milnor-Witt K-theory sheaves]
\label{ex:unramifiedmilnorwitt}
Morel defines unramified Milnor-Witt K-theory sheaves $\K^{MW}_n$ in \cite[\S 3.2]{MField}.  It follows from \cite[Theorems 3.37 and 3.46]{MField} that these sheaves are strictly $\aone$-invariant in case $n\geq 0$.  When $n < 0$, the sheaf $\K^{MW}_n$ is isomorphic to $\mathbf{W}$ and strict $\aone$-invariance follows from the result of Panin quoted in the previous paragraph.
\end{ex}

\subsection{Contracted sheaves}
\label{ss:contracted}
For any smooth $k$-scheme $U$, the unit map $\Spec k \to \gm{}$ yields a morphism $U\to U\times\gm{}$.  If $\mathbf{A}$ is a strictly $\aone$-invariant sheaf, the sheaf $\mathbf{A}_{-1}$ is then defined by
\[
\mathbf{A}_{-1}(U) := \ker(\mathbf{A}(\gm{} \times U) \to \mathbf{A}(U)).
\]
We can then inductively define $\mathbf{A}_{-n} :=(\mathbf{A}_{-n+1})_{-1}$ for any integer $n\geq 1$; we call $\mathbf{A}_{-n}$ the $n$-th contraction of $\mathbf{A}$.  Contraction is an exact functor on the category of strictly $\aone$-invariant sheaves (in fact, slightly more is true; see, e.g., \cite[Lemma 7.33]{MField}). If $({\mathcal X},x)$ is a pointed $\aone$-connected space, the $\aone$-homotopy sheaves of $\gm{}$-loop spaces of ${\mathcal X}$ are related to those of ${\mathcal X}$ by the following result of Morel.

\begin{thm}[{\cite[Theorem 6.13]{MField}}]
If $(\mathscr{X},x)$ is a pointed $\aone$-connected space, then for every pair of integers $i,j \geq 1$
\[
\bpi^{\aone}_{i,j}( \mathscr{X},x):= \bpi_i^{\aone}({\mathbf R}\Omega^j_{\gm{}}(\mathscr{X},x)) = \bpi_i^{\aone}(\mathscr{X},x)_{-j}.
\]
\end{thm}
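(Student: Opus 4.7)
The plan is to reduce by induction on $j$ to the case $j=1$, since $\mathbf{R}\Omega^j_{\gm{}} = \mathbf{R}\Omega_{\gm{}}\circ \mathbf{R}\Omega^{j-1}_{\gm{}}$ and contraction iterates compatibly via its exactness on strictly $\aone$-invariant sheaves (see Section \ref{ss:contracted}). The case $j=1$ then splits into two independent identifications.

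First, I would invoke the Quillen adjunction $-\wedge \gm{} \dashv \Omega_{\gm{}}$ on pointed $k$-spaces, which descends to an adjunction on $\hop{k}$. Applied with $A = S^i_s \wedge U_+$ for any smooth $k$-scheme $U$, this yields a natural bijection
\[
[S^i_s \wedge U_+ \wedge \gm{}, (\mathscr{X},x)]_{\aone} \cong [S^i_s \wedge U_+, \mathbf{R}\Omega_{\gm{}}(\mathscr{X},x)]_{\aone}.
\]
Nisnevich-sheafifying in $U$, the left-hand side is by definition $\bpi^{\aone}_{i,1}(\mathscr{X},x)$ and the right-hand side is $\bpi_i^{\aone}(\mathbf{R}\Omega_{\gm{}}(\mathscr{X},x))$, giving the first identification essentially for free.

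Second, I would exploit that the structure map $\gm{} \to \Spec k$ retracts the basepoint inclusion $\Spec k \to \gm{}$, producing a split cofiber sequence of pointed spaces $S^0 \to \gm{}_+ \to \gm{}$. Smashing with $U_+$ and then with $S^i_s$, and applying $[-, (\mathscr{X},x)]_{\aone}$, yields for $i \geq 1$ a split short exact sequence of groups (abelian for $i \geq 2$)
\[
0 \to [S^i_s \wedge U_+ \wedge \gm{}, (\mathscr{X},x)]_{\aone} \to [S^i_s \wedge (U \times \gm{})_+, (\mathscr{X},x)]_{\aone} \to [S^i_s \wedge U_+, (\mathscr{X},x)]_{\aone} \to 0.
\]
Sheafifying in $U$ and using exactness of Nisnevich sheafification, the leftmost term is identified as a sheaf with $\ker\bigl(\bpi_i^{\aone}(\mathscr{X},x)(U \times \gm{}) \to \bpi_i^{\aone}(\mathscr{X},x)(U)\bigr)$, which is by definition $\bpi_i^{\aone}(\mathscr{X},x)_{-1}(U)$.

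The main technical subtlety is ensuring that the identifications pass correctly from presheaves of $\aone$-homotopy classes to their Nisnevich sheafifications — in particular that the sheafification of $U \mapsto [S^i_s \wedge (U \times \gm{})_+, (\mathscr{X},x)]_{\aone}$ genuinely equals $U \mapsto \bpi_i^{\aone}(\mathscr{X},x)(U \times \gm{})$. The cleanest way to handle this is to work with a simultaneously $\aone$-fibrant and Nisnevich-fibrant replacement of $(\mathscr{X},x)$, for which the presheaf of homotopy classes is already a sheaf. The $\aone$-connectedness hypothesis on $\mathscr{X}$, together with Theorem \ref{thm:morelaoneinvariance}, then guarantees that $\bpi_i^{\aone}(\mathscr{X},x)$ is strictly (or strongly, for $i=1$) $\aone$-invariant, so that the iterated contractions $\bpi_i^{\aone}(\mathscr{X},x)_{-j}$ are well-defined and sit inside the abelian category recalled in Section \ref{ss:strictlyaoneinvariant}; this is what makes the inductive step on $j$ legitimate.
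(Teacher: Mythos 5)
This theorem is cited from \cite[Theorem 6.13]{MField}; the paper contains no proof of its own, so I evaluate your argument directly. Your reduction to $j=1$ and the adjunction identification $\bpi_{i,1}^{\aone}(\mathscr{X}) \cong \bpi_i^{\aone}(\mathbf{R}\Omega_{\gm{}}\mathscr{X})$ are both fine; the latter is essentially the definition from Subsection~\ref{ss:preliminaries}. Likewise the split cofiber sequence $S^0\to(\gm{})_+\to\gm{}$ and the resulting split short exact sequence of presheaves are correct. The gap lies in your proposed resolution of the ``main technical subtlety.''

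Your claim that for a simultaneously $\aone$-fibrant and Nisnevich-fibrant replacement $\mathscr{X}^f$ the presheaf $U\mapsto\pi_i(\mathscr{X}^f(U))$ is ``already a sheaf'' is false. Nisnevich fibrancy ensures that $\mathscr{X}^f(V)$ is a homotopy limit over covers of $V$, but homotopy groups do not commute with homotopy limits, so the homotopy presheaves need not satisfy the sheaf axiom. For instance, taking $\mathscr{X}=K(\mathbf{A},n)$ for $\mathbf{A}$ strictly $\aone$-invariant gives $\pi_{n-1}(\mathscr{X}^f(U))\cong H^1_{\Nis}(U,\mathbf{A})$, a presheaf whose Nisnevich sheafification is zero but which is typically nonzero. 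Consequently the central identification --- that the Nisnevich sheafification in $U$ of $U\mapsto[S^i_s\wedge(U\times\gm{})_+,(\mathscr{X},x)]_{\aone}$ agrees with the sheaf $U\mapsto\bpi_i^{\aone}(\mathscr{X},x)(U\times\gm{})$ --- is not established, and this is precisely the nontrivial content of Morel's theorem. Proving it requires the Brown--Gersten descent spectral sequence together with the structure theory of strongly $\aone$-invariant sheaves; roughly, one must know that for a finitely generated field extension $F$ of $k$ and a strongly $\aone$-invariant sheaf $\mathbf{M}$, the group $H^1_{\Nis}$ of $\gm{}$ over $F$ with coefficients in $\mathbf{M}$ vanishes, so that the relevant edge maps of the descent spectral sequence become isomorphisms. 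Theorem~\ref{thm:morelaoneinvariance}, which you invoke only to ensure contractions are well-defined for the induction on $j$, is actually the key input you must combine with these cohomological vanishing statements in order to bridge the presheaf/sheaf gap; that is the real substance of the proof.
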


Given any smooth scheme $X$ and any strictly $\aone$-invariant sheaf $\mathbf{A}$, Morel defines a complex called the Rost-Schmid complex \cite[Definition 5.7 and Theorem 5.31]{MField} whose terms are expressed using successive contractions of $\mathbf{A}$.  Morel shows that the Rost-Schmid complex coincides with the Gersten complex \cite[Corollary 5.44]{MField} and we will routinely use this identification in the sequel.  In particular, all the strictly $\aone$-invariant sheaves in the previous section admit Gersten resolutions (presented in a particularly nice form) by this result.

\begin{ex}
\label{ex:classicalcontractions}
The descriptions of the contractions of the sheaves mentioned in Examples \ref{ex:quillenktheory}-\ref{ex:unramifiedmilnorwitt} are classical.  In particular, $(\K^Q_i)_{-j} \cong \K^Q_{i-j}$, $(\K^M_i)_{-j} \cong \K^M_{i-j}$, $(\mathbf{I}^i)_{-j} \cong \mathbf{I}^{i-j}$, $(\mathbf{W})_{-j} \cong \mathbf{W}$ and $(\K^{MW}_i)_{-j} \cong \K^{MW}_{i-j}$.  Proofs of these results can be found, e.g., in \cite[Lemma 2.7 and Proposition 2.9]{AsokFaselSpheres}.
\end{ex}

\subsection{Twists of strictly $\aone$-invariant sheaves}
\label{ss:twists}
In formulating the obstructions to lifting, we will, in analogy with the situation in classical topology, be forced to consider the analog of cohomology with coefficients in a local system.  In the context of the splitting problem, the ``local systems" that appear are ``line bundle twists of strictly $\aone$-invariant sheaves."

If $F/k$ is a field extension, and $a \in F^{\times}$, following the discussion of \cite[p. 51]{MField} we set $\langle a \rangle = 1 + \eta [a]$.  The assignment $a \mapsto \langle a \rangle$ defines, by \cite[Lemma 3.5(4)]{MField}, for any smooth connected $k$-scheme $U$ a homomorphism $\gm{}(k(U)) \to \K^{MW}_0(k(U))^{\times}$ and a morphism of sheaves $\gm{} \to (\K^{MW}_0)^{\times}$.  This morphism extends to a morphism of sheaves of rings $\Z[\gm{}] \to \K^{MW}_0$.

Suppose $\mathbf{A}$ is a strictly $\aone$-invariant sheaf.  Any sheaf of the form $\mathbf{A}_{-1}$ is equipped with an action of $\gm{}$ which factors through an action of $\K_0^{MW}$ via the morphism $\gm{} \to {\K^{MW}_0}^{\times}$ mentioned above \cite[Lemma 3.49]{MField}. For instance, since $(\K_n^{MW})_{-1}=\K_{n-1}^{MW}$ for any $n\in\Z$ \cite[\S 3.2]{MField}, the action $\K_0^{MW}\times \K_{n-1}^{MW}\to \K_{n-1}^{MW}$ is the map induced by multiplication in Milnor-Witt K-theory \cite[p. 80]{MField}.

Assume that $\mathbf{A}$ carries an action of $\gm{}$. If $X$ is a smooth scheme and $\L$ is an invertible $\O_X$-module with associated line bundle $L$, write $L^{\circ}$ for the $\gm{}$-torsor underlying $L$, i.e., the complement of the zero section in $L$. Set
\[
\mathbf{A}(\L) := \Z[L^{\circ}]\otimes_{\Z[\gm{}]}\mathbf{A}.
\]
Sheaves of the form $\mathbf{A}(\L)$ admit a Gersten (or Rost-Schmid) resolution following \cite[Remarks 5.13 and 5.14]{MField}. If $\mathbf{A}=\K_n^{MW}$ equipped with the $\gm{}$-action induced by multiplication by $\K_0^{MW}$, and $\L$ is a line bundle on a smooth scheme $X$, then the Gersten resolution of $\K_n^{MW}(\L)$ is the one considered in \cite[\S 5]{AsokFaselThreefolds} and coincides with the fiber product of \cite[D\'efinition 10.2.7]{FaselChowWitt}.

\section{Grothendieck-Witt sheaves and geometric Bott periodicity}\label{section:geometricbottperiodicity}
In this section, we begin by briefly recalling some facts about Grothendieck-Witt groups, a.k.a. Hermitian $K$-theory, which is the algebro-geometric analog of topological $KO$-theory. Our main sources are \cite{Schlichting09,Schlichting10,SchlichtingTripathi}.  From the standpoint of this paper, the most important fact about these groups that we will use is that Grothendieck-Witt groups are representable in the $\aone$-homotopy category by explicit ind-algebraic varieties, which provide algebro-geometric models for the spaces appearing in the classical Bott periodicity theorem.  The $\aone$-homotopy sheaves of the spaces representing Grothendieck-Witt groups are denoted $\mathbf{GW}^j_i$ and we describe and study several equivalent versions of a natural action of $\gm{}$ on these sheaves.  Explicit information about this action is required in the proof of Theorem \ref{thmintro:murthysplittingdimension4} because we will need to consider cohomology with coefficients in Grothendieck-Witt sheaves twisted by a line bundle as described in Subsection \ref{ss:twists}.  We end our discussion with a computation of $H^n_{\Nis}(X,\mathbf{GW}_n^{n-1})$ (and its twisted versions) for any smooth scheme $X$.

\subsection{Definitions}
\label{ss:definitions}
Let $X$ be a smooth scheme over a field $k$ of characteristic different from $2$ (we keep these assumptions throughout the section, though it is not necessary for some of the arguments).  For every such $X$, and any line bundle $\L$ on $X$, one has an {\em exact category with weak equivalences and (strong) duality} $(\mathcal{C}^b(X),qis,\sharp_{\L},\varpi_{\L})$ in the sense of \cite[\S 2.3]{Schlichting10}.  Here, $\mathcal{C}^b(X)$ is the category of bounded complexes of locally free coherent sheaves on $X$, weak equivalences are given by quasi-isomorphisms of complexes, $\sharp_{\L}$ is the duality functor induced by the functor $\hom_{\O_X}(\_,\L)$, and the natural transformation $\varpi_{\L}:1\to \sharp_{\L}\sharp_{\L}$ is induced by the canonical identification of a locally free sheaf with its double dual.  The (left) translation (or shift) functor $T:\mathcal{C}^b(R)\to \mathcal{C}^b(R)$ yields new dualities $\sharp^n_{\L}:=T^n\circ \sharp_{\L}$ and canonical isomorphisms $\varpi^n_{\L}:=(-1)^{n(n+1)/2}\varpi_{\L}$.

Associated with an exact category with weak equivalences and (strong) duality is a Grothendieck-Witt space and higher Grothendieck-Witt groups, obtained as homotopy groups of the Grothendieck-Witt space \cite[\S 2.11]{Schlichting10}.  We write $\mathcal{GW}(\mathcal{C}^b(X),qis,\sharp^j_{\L},\varpi^j_{\L})$ for the Grothendieck-Witt space of the example described in the previous paragraph.

\begin{defn}
\label{defn:gwgroups}
For $i\geq 0$, we denote by $GW^j_i(X,{\L})$ the group $\pi_i\mathcal{GW}(\mathcal{C}^b(X),qis,\sharp^j_{\L},\varpi^j_{\L})$. If ${\L}=\O_X$, we write $GW^j_i(X)$ for $GW^j_i(X,\O_X)$.
\end{defn}

Since $2$ is invertible in $k$ by assumption, the Grothendieck-Witt groups defined above are a Waldhausen-style version of hermitian $K$-theory as defined by M. Karoubi \cite{Karoubi,Karoubi80} in the case of affine schemes (see \cite[Remark 4.16]{Schlichting09} or \cite{Horn}).  In particular, given a smooth $k$-algebra $R$ there are identifications of the form
\[
\begin{split}
GW_i^0(R) &= K_iO(R), \text{ and } \\
GW_i^2(R) &= K_iSp(R)
\end{split}
\]
by \cite[Corollary A.2]{SchlichtingHKDE}.  Furthermore, there are identifications $GW_i^1(R)={}_{-1}U_i(R)$ and $GW_i^3(R)=U_i(R)$, where the groups $U_i(R)$ and ${}_{-1}U_i(R)$ are Karoubi's $U$-groups, and $GW_i^n$ is $4$-periodic in $n$.

We will frequently use the \emph{Karoubi periodicity} exact sequences \cite[Theorem 6.1]{SchlichtingHKDE}:
\[
\xymatrix@R=.5em@C=2.5em{\ldots\ar[r]^-{f_{i,j-1}} & K_i(X)\ar[r]^-{H_{i,j}} & GW_i^j(X,{\L})\ar[r]^-{\eta} & GW_{i-1}^{j-1}(X,{\L})\ar[r]^-{f_{i-1,j-1}} & K_{i-1}(X)\ar[r]^-{H_{i-1,j}} & \ldots
}
\]
which are defined for any $i,j\in \mathbb{N}$ and any line bundle ${\L}$ over $X$. Here, the maps $H_{i,j}:K_i(X)\to GW_i^j(X,{\L})$ are the hyperbolic morphisms, $f_{i,j}:GW_i^j(X,{\L})\to K_i(X)$ are the forgetful morphisms and $\eta$ is multiplication by a distinguished element in $GW_{-1}^{-1}(k)$.

\begin{rem}
If $F$ is a field, Morel shows in \cite[Lemma 3.10]{MField} that there are canonical isomorphisms $\K^{MW}_0(F) \isomt GW(F)$ and $\K^{MW}_{-i}(F) \isomt W(F)$ for $i > 0$.  By definition, $GW^0_0(F) = GW(F)$ and $GW^{-i}_{-i}(F) \cong W(F)$.  Via these identifications $\eta \in K^{MW}_{-1}(F)$ coincides with $\eta \in GW^{-1}_{-1}(F)$.
\end{rem}

\subsection{Classifying spaces}
\label{ss:classifyingspaces}
If $G$ is a simplicial Nisnevich sheaf of groups, we write $BG$ for the usual simplicial classifying space of $G$ (see \cite[\S 4.1]{MV}) and $\mathcal BG$ for a fibrant model of $BG$.  As usual, $BG$ can be described as $EG/G$ where $EG$ is a simplicially contractible space with a free action of $G$.

If $G$ is a linear algebraic group, then by \cite[\S 4 Proposition 1.15]{MV}, the space $BG$ classifies Nisnevich locally trivial $G$-torsors in $\hsnis$.  In particular, if $P \to X$ is a Nisnevich locally trivial $G$-torsor over a smooth scheme $X$, there is a (well-defined up to simplicial homotopy) morphism $X \to \mathcal {B}G$ such that $P$ is the pullback of the universal $G$-torsor over $\mathcal {B}G$ \cite[\S 4, Proposition 1.15]{MV}.

\subsection{Geometric representability}
\label{ss:geometricrepresentability}
For any $j\in\N$, write $\mathscr{GW}^j$ for a simplicially fibrant model of the presheaf $X\mapsto \mathcal{GW}(\mathcal{C}^b(X),qis,\sharp^j,\varpi^j)$. As Grothendieck-Witt groups satisfy Nisnevich descent and are $\aone$-homotopy invariant, the spaces $\mathscr{GW}^j$ represent Grothendieck-Witt groups in $\hop k$, i.e.,
\[
GW_i^j(X)=[S^i_s\wedge X_+,\mathscr{GW}^j]_{\aone}
\]
for any $i,j\in\N$ (\cite[Theorem 3.1]{Hornbostel05} or \cite[Lemma 6.2]{PaninWalter1}). For any $i,j\in\N$, write $\mathbf{GW}_i^j$ for the Nisnevich sheaf on $\Sm_k$ associated with the presheaf $U \mapsto GW_i^j(U)$. The above formula immediately yields $\piaone_i(\mathscr{GW}^j)=\mathbf{GW}_i^j$. We have therefore obtained the following result.  We now review the explicit geometric model for the space $\Omega_s^1\mathscr{GW}^3$ provided by \cite{SchlichtingTripathi}.

\begin{rem}[Quotients]
In the sequel, we will discuss quotients of smooth schemes $X$ by the free action of a (reductive) linear algebraic group $G$ . A priori, there are two things we could mean by ``quotient."  First, one could consider the quotient of the representable sheaf $X$ by the action of the sheaf of groups $G$ as a Nisnevich sheaf.  Alternatively, if $X$ is a smooth quasi-projective variety and $G$ is a reductive algebraic group acting (scheme-theoretically) freely on $X$, then a geometric quotient $X/G$ exists as a smooth scheme \cite[Chapter 1]{GIT}.  If $G$ is special, i.e., all $G$-torsors are Zariski locally trivial, which happens for $G = GL_n, SL_n$ or $Sp_{2n}$, then the quotient map $X \to X/G$ is Zariski locally trivial.  Using Zariski local sections one deduces that the quotient of the representable sheaf $X$ by the representable sheaf $G$ computed in the category of Nisnevich sheaves coincides with the representable sheaf associated with $X/G$, i.e., the notation $X/G$ is unambiguous.
\end{rem}

As usual, write $GL:=\colim_{m\in \N} GL_m$ where the transition maps $GL_m\hookrightarrow GL_{m+1}$ are obtained by sending an invertible $m \times m$ matrix $M$ to the block $(m+1)\times (m+1)$-matrix
\[
\begin{pmatrix}M & 0 \\ 0 & 1 \end{pmatrix}.
\]
By \cite[\S 4 Theorem 3.13]{MV}, the space $KGL:=\Z \times BGL$ represents algebraic K-theory in $\hop{k}$.

Similarly, write $Sp:=\colim_{m\in\N} Sp_{2m}$ where the maps $Sp_{2m}\to Sp_{2m+2}$ are obtained my mapping a symplectic $2m\times 2m$ matrix $M$ to the block $(2m+2)\times (2m+2)$-symplectic matrix
\[
\begin{pmatrix}M & 0 \\ 0 & Id_2 \end{pmatrix}.
\]
By \cite[Theorem 8.2]{PaninWalter1}, the space $KSp:=\Z\times BSp$ represents symplectic $K$-theory in $\hop{k}$. Taking the colimit of the standard inclusions $Sp_{2m}\to GL_{2m}$ yields a map $Sp\to GL$ and therefore a map $BSp\to BGL$. Taking the cartesian product with the multiplication by $2$ map $\Z\to \Z$, we obtain a map $KSp\to KGL$ which we refer to as the \emph{forgetful} map.

It follows from \cite[Proposition 5.2]{Wendt} that the inclusions $Sp_{2m}\to GL_{2m}$ yield $\aone$-fiber sequences
\[
GL_{2m}/Sp_{2m} \longrightarrow BSp_{2m} \longrightarrow BGL_{2m}
\]
for any $m\in\N$ and thus, passing to the colimits and using \cite[Corollary 2.16]{DundasOestvaerRoendigs}, an $\aone$-fiber sequence
\[
GL/Sp \longrightarrow BSp \longrightarrow BGL.
\]
By \cite[Theorem 8.4]{SchlichtingTripathi}, the space $GL/Sp$ is isomorphic to $\Omega_s^1\mathscr{GW}^3$ in $\hop{k}$.

\begin{prop}
\label{prop:geometricGW3}
For any $i\in\N$, we have $\piaone_i(GL/Sp)=\mathbf{GW}_{i+1}^3$.
\end{prop}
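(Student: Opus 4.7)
The plan is to derive the statement directly from the Schlichting--Tripathi identification $GL/Sp \simeq \Omega_s^1 \mathscr{GW}^3$ in $\hop{k}$ recorded just above the proposition, combined with the representability formula $\piaone_i(\mathscr{GW}^j) = \mathbf{GW}_i^j$ already established in the previous paragraph. In other words, there is essentially no new content to produce: the work has all been done by quoting \cite[Theorem 8.4]{SchlichtingTripathi} and \cite[Theorem 3.1]{Hornbostel05} (or \cite[Lemma 6.2]{PaninWalter1}).

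First I would write $\piaone_i(GL/Sp) = \piaone_i(\Omega_s^1 \mathscr{GW}^3)$ using the cited weak equivalence, noting that $\aone$-homotopy sheaves are invariants of isomorphism class in $\hop{k}$. Next I would invoke the standard identity $\piaone_i(\Omega_s^1 \mathscr{Y}) = \piaone_{i+1}(\mathscr{Y})$ for any pointed space $\mathscr{Y}$. Concretely, since $\mathscr{GW}^3$ is simplicially fibrant by construction and represents an $\aone$-invariant, Nisnevich-local theory, it is motivically fibrant, and the functor $\Omega_s^1$ preserves motivic fibrations between motivically fibrant objects. Hence $\Omega_s^1 \mathscr{GW}^3$ computes the derived loop space in $\hop{k}$, and the shift formula for $\aone$-homotopy sheaves applies.

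Combining the two steps yields
\[
\piaone_i(GL/Sp) \;=\; \piaone_i(\Omega_s^1 \mathscr{GW}^3) \;=\; \piaone_{i+1}(\mathscr{GW}^3) \;=\; \mathbf{GW}^3_{i+1},
\]
where the last equality is the representability statement recalled above the proposition.

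There is no substantive obstacle here; the only thing to verify is the compatibility of $\Omega_s^1$ with the passage to $\aone$-homotopy sheaves, which is standard once one knows $\mathscr{GW}^3$ is motivically fibrant (a consequence of Nisnevich descent plus $\aone$-invariance of Grothendieck--Witt groups, as invoked in the paragraph preceding the proposition). All conceptual difficulty is hidden in the Schlichting--Tripathi theorem, which is used as a black box.
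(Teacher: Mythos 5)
Your proof is correct and follows essentially the same route as the paper, which treats the proposition as an immediate consequence of the Schlichting--Tripathi identification $GL/Sp \simeq \Omega_s^1\mathscr{GW}^3$ and the representability statement $\piaone_i(\mathscr{GW}^j)=\mathbf{GW}^j_i$ (the paper even signals this by writing ``We have therefore obtained the following result'' while setting up these two facts). The point you flag about $\Omega_s^1$ computing the derived motivic loop space of $\mathscr{GW}^3$ by fibrancy is exactly the implicit justification, so there is no gap.
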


It follows from this Proposition and Theorem \ref{thm:morelaoneinvariance} that the sheaf $\mathbf{GW}^j_i$ is strictly $\aone$-invariant for $i \geq 2$.  In fact, the sheaves $\mathbf{GW}^j_i$ are strictly $\aone$-invariant for $i < 2$ as well; this follows, e.g., from the stable representability of Grothendieck-Witt groups \cite{Hornbostel05,PaninWalter1}.

\subsection{Contractions of Grothendieck-Witt sheaves}
\label{ss:contractions}
Recall the definition of the contraction of a sheaf from Subsection \ref{ss:contracted}.

\begin{lem}[{\cite[Proposition 4.4]{AsokFaselThreefolds}}]
\label{lem:contractionofgw}
For any $i,j\in\mathbb{Z}$, we have $(\mathbf{GW}^j_{i})_{-1}=\mathbf{GW}^{j-1}_{i-1}$.
\end{lem}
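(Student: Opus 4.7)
The plan is to reduce the contraction formula to a Bass-style fundamental theorem (equivalently, a form of $\gm{}$-Bott periodicity) for Grothendieck-Witt groups. First, I would unwind the definition: by Subsection \ref{ss:contracted}, since $\mathbf{GW}^j_i$ is strictly $\aone$-invariant (as noted just after Proposition \ref{prop:geometricGW3}), the contraction $(\mathbf{GW}^j_i)_{-1}$ is the Nisnevich sheaf associated with the presheaf sending $U$ to the kernel of the restriction map $GW^j_i(\gm{} \times U) \to GW^j_i(U)$ induced by $1 \colon \Spec k \to \gm{}$.

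Next, I would invoke the Bass-type decomposition for Hermitian $K$-theory established in Schlichting's work \cite{Schlichting10, SchlichtingHKDE}: for every smooth $U$ there is a canonical split short exact sequence
\[
0 \longrightarrow GW^j_i(U) \longrightarrow GW^j_i(\gm{} \times U) \longrightarrow GW^{j-1}_{i-1}(U) \longrightarrow 0,
\]
in direct analogy with the Bass fundamental theorem $K_i(\gm{} \times U) = K_i(U) \oplus K_{i-1}(U)$. The splitting is provided by the projection $\gm{} \times U \to U$, and the map to the $GW^{j-1}_{i-1}$ summand encodes cup product with the tautological unit, consistent with the shift in the Karoubi periodicity exact sequence recalled in Subsection \ref{ss:definitions}. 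Sheafifying this decomposition for the Nisnevich topology identifies the kernel of the restriction map presheaf-wise with $U \mapsto GW^{j-1}_{i-1}(U)$, which sheafifies to $\mathbf{GW}^{j-1}_{i-1}$.

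An equivalent approach uses the geometric representability of Subsection \ref{ss:geometricrepresentability}. Since $\mathscr{GW}^j$ is $\aone$-fibrant and represents $GW^j_i$, Morel's identification $\bpi^{\aone}_{i,1}(\mathscr X,x) = (\bpi^{\aone}_i(\mathscr X,x))_{-1}$ reduces the claim to a $\gm{}$-loop space identification $\mathbf{R}\Omega_{\gm{}} \mathscr{GW}^j \simeq \mathscr{GW}^{j-1}$ in $\hop{k}$; the desired contraction formula then follows by taking $\bpi^{\aone}_{i-1}$ (with the index shift absorbed by $\Omega_{\gm{}}$). This $\gm{}$-periodicity is of course exactly what underlies the Bass-style decomposition above.

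The main obstacle is providing the Bass-style short exact sequence: this is the only nontrivial input and is not formal, requiring the machinery of Schlichting's foundational papers on Hermitian $K$-theory (which crucially uses that $\mathrm{char}\,k \neq 2$). Once this input is in hand, the passage from the presheaf-level split short exact sequence to the claimed identification of Nisnevich sheaves is routine, and the formula holds uniformly in $i, j \in \Z$ by $4$-periodicity in $j$ and the stable representability of Grothendieck-Witt groups for $i < 2$.
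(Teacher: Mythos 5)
Your proposal is correct. The paper itself does not reprove this lemma (it simply cites \cite[Proposition 4.4]{AsokFaselThreefolds}), but that reference reduces the claim to exactly the Bass-style fundamental theorem for Grothendieck-Witt groups established by Schlichting, which is the same theorem the present paper invokes (under the guise of \cite[Theorem 9.13]{SchlichtingHKDE}) in the proof of the adjacent Lemma \ref{lem:karoubiperiodicitycontractions}; so your argument follows essentially the same route as the source being cited.
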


\begin{lem}\label{lem:karoubiperiodicitycontractions}
If $H_{i,j}:\K^Q_i\to \mathbf{GW}_i^j$ is the hyperbolic morphism, and $f_{i,j}:\mathbf{GW}_i^j\to \K_i^Q$ is the forgetful morphism, then $(H_{i,j})_{-1}=H_{i-1,j-1}$ and $(f_{i,j})_{-1}=f_{i-1,j-1}$.
\end{lem}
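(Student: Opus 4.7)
The plan is to argue by naturality combined with explicit models for the contraction isomorphisms. Since both $H_{i,j}$ and $f_{i,j}$ are morphisms of strictly $\aone$-invariant sheaves, the exactness of the contraction functor on that abelian category (see Subsection \ref{ss:contracted}) immediately yields induced morphisms $(H_{i,j})_{-1}: (\K^Q_i)_{-1} \to (\mathbf{GW}^j_i)_{-1}$ and, in the other direction, $(f_{i,j})_{-1}$. What remains is to identify them under the isomorphisms $(\K^Q_i)_{-1} \cong \K^Q_{i-1}$ of Example \ref{ex:classicalcontractions} and $(\mathbf{GW}^j_i)_{-1} \cong \mathbf{GW}^{j-1}_{i-1}$ of Lemma \ref{lem:contractionofgw}.

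The cleanest execution works at the representing-space level. The hyperbolic and forgetful maps lift to morphisms $KGL \to \mathscr{GW}^j$ and $\mathscr{GW}^j \to KGL$ in $\hop{k}$. Under the standard representability and Morel's identification of the $\gm{}$-loop functor with contraction on homotopy sheaves, contraction corresponds to applying $\mathbf{R}\Omega_{\gm{}}$ to the representing space. But $\mathbf{R}\Omega_{\gm{}} KGL \simeq KGL$ (the usual loop-sum decomposition that shifts the homotopy index) and $\mathbf{R}\Omega_{\gm{}}\mathscr{GW}^j \simeq \mathscr{GW}^{j-1}$, the geometric Bott-periodicity equivalence underlying Lemma \ref{lem:contractionofgw} and coming from the splitting of Grothendieck-Witt groups on $\gm{} \times X$. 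By construction, the morphisms induced on these $\gm{}$-loop spaces are again the hyperbolic and forgetful morphisms, now at the shifted indices $(i-1,j-1)$; taking $\bpi^{\aone}_{i-1}$ gives the lemma.

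Alternatively, one can argue at the level of sections: the contraction isomorphisms are implemented by exterior multiplication with canonical classes, namely $[t] \in K_1(\gm{})$ on the K-theory side and a canonical class $\omega \in GW_1^1(\gm{})$ on the Grothendieck-Witt side. Because the Grothendieck-Witt theory of $X$ is a module over $K_0(X)$ via the forgetful map (projection formula: $f(H(x) \cdot a) = x \cdot f(a)$), the forgetful map is automatically $\K^Q_*$-linear, so $(f_{i,j})_{-1}$ carries the operator ``multiplication by $\omega$'' to ``multiplication by $[t]$,'' which is exactly the statement $(f_{i,j})_{-1} = f_{i-1,j-1}$. Dually, one has $H_{1,1}([t]) = \omega$ by the very construction of $\omega$ in Lemma \ref{lem:contractionofgw}, and this gives $(H_{i,j})_{-1} = H_{i-1,j-1}$.

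The main obstacle is making the compatibility between $\omega$ and the hyperbolic/forgetful morphisms precise, i.e., verifying that the canonical class used in the Grothendieck-Witt splitting on $\gm{} \times X$ agrees with $H_{1,1}([t])$ (and that $f_{1,1}(\omega)$ is the appropriate multiple of $[t]$). This is essentially embedded in the proof of Lemma \ref{lem:contractionofgw} given in \cite{AsokFaselThreefolds}, so in practice the lemma should follow by unpacking that construction and invoking naturality of the hyperbolic and forgetful transformations.
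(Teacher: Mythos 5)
The paper's proof of this lemma is a two-line observation: the contraction isomorphism of Lemma \ref{lem:contractionofgw} is built from Schlichting's Mayer--Vietoris long exact sequence, and, because the hyperbolic and forgetful transformations are natural in the input data, they induce morphisms of Mayer--Vietoris sequences and hence commute with the resulting identifications. Your write-up takes a genuinely different route (two, actually), and both have problems.

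Your second approach --- the ``multiplication by canonical classes'' argument --- has a substantive gap, which you partially flag yourself but treat too optimistically. The contraction isomorphism $(\mathbf{GW}^j_i)_{-1} \cong \mathbf{GW}^{j-1}_{i-1}$ shifts the Karoubi index $j$ as well as the homotopical index $i$. The $\K^Q_*$-module structure on Grothendieck--Witt theory (via the projection formula $f(H(x)\cdot a) = x\cdot f(a)$) preserves the $j$-index; there is no K-theoretic class whose cup product implements a $j$-shift. So the only way your argument could work is if the class $\omega$ implementing the GW splitting were literally hyperbolic, $\omega = H_{1,1}([t])$. There is no reason to expect this: the splitting class in Grothendieck--Witt theory of $\gm{}$ is generally not in the image of the hyperbolic map, and the relation you want is neither a formal consequence of the definitions nor anything visible in the construction of Lemma \ref{lem:contractionofgw}. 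Saying that this is ``essentially embedded'' in the proof of that lemma is not a proof.

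Your first approach (representing spaces and $\gm{}$-looping) is sounder in outline, but the crux --- that the Bott periodicity equivalences $\mathbf{R}\Omega_{\gm{}} KGL \simeq KGL$ and $\mathbf{R}\Omega_{\gm{}}\mathscr{GW}^j \simeq \mathscr{GW}^{j-1}$ carry the looped hyperbolic/forgetful maps to the shifted hyperbolic/forgetful maps --- is precisely the content of the lemma, and you assert it with ``By construction'' and no citation. To close this you would need the hyperbolic and forgetful maps to be maps of $\pone$-$\Omega$-spectra compatible with the explicit Bott periodicity equivalence; this is true but requires justification. The paper's Mayer--Vietoris argument delivers exactly this compatibility at essentially no cost, which is why it is the proof to prefer.
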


\begin{proof}
The proof of \cite[Theorem 9.13]{SchlichtingHKDE} relies on the Mayer-Vietoris long exact sequence \cite[Theorem 9.6]{SchlichtingHKDE}. It suffices to observe that both the hyperbolic and the forgetful homomorphisms induce morphisms of Mayer-Vietoris sequences, which is a consequence of the construction.
\end{proof}

While the negative K-theory of a smooth scheme is trivial, smooth schemes can have non-trivial negative Grothendieck-Witt groups.  As a consequence, iterated contractions of Grothendieck-Witt sheaves need not be trivial.  The next result shows that, nevertheless, for certain values of the indices, repeated contraction of Grothendieck-Witt sheaves does produce the trivial sheaf.

\begin{prop}
\label{prop:vanishingofcontractions}
If $j > i+1$, then $(\mathbf{GW}^i_{i+1})_{-j} = 0$
\end{prop}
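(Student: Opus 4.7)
The plan is to unfold the iterated contraction to a single sheaf-theoretic vanishing, reduce that vanishing to sections over fields by strict $\aone$-invariance, and then kill the resulting group using the Karoubi periodicity sequence together with the vanishing of the negative Quillen $K$-theory of fields. Applying Lemma \ref{lem:contractionofgw} $j$ times gives $(\mathbf{GW}^i_{i+1})_{-j} \cong \mathbf{GW}^{i-j}_{i+1-j}$; setting $n := i+1-j$, the hypothesis $j > i+1$ becomes $n \leq -1$, so it suffices to prove that $\mathbf{GW}^{n-1}_n = 0$ for every $n \leq -1$. Since this sheaf is strictly $\aone$-invariant (by the geometric representability of Hermitian $K$-theory recalled in Subsection \ref{ss:geometricrepresentability} combined with Theorem \ref{thm:morelaoneinvariance}), it is in particular unramified, and the discussion in Subsection \ref{ss:strictlyaoneinvariant} reduces the task to showing $GW^{n-1}_n(F) = 0$ for every finitely generated field extension $F/k$.

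Fix such an $F$. The key input is that $K^Q_m(F) = 0$ for all $m < 0$, which plugged into the Karoubi periodicity sequence of Subsection \ref{ss:definitions} forces $\eta\colon GW^{j}_m(F) \to GW^{j-1}_{m-1}(F)$ to be an isomorphism whenever $m \leq -1$. Inverting this isomorphism and iterating (each step raises both indices by $1$ and is permitted as long as the new lower index remains $\leq -1$), I would obtain a chain of isomorphisms
\[
GW^{n-1}_n(F) \;\cong\; GW^{n}_{n+1}(F) \;\cong\; \cdots \;\cong\; GW^{-2}_{-1}(F),
\]
terminating once the lower index reaches $-1$. By the $4$-periodicity in the upper index this last group equals $GW^{2}_{-1}(F)$, so the problem reduces to the single base-case vanishing $GW^{2}_{-1}(F) = 0$.

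The hard part will be this base case. The plan is to extract it from one more application of Karoubi periodicity: the sequence
\[
K^Q_0(F) \xrightarrow{H_{0,3}} GW^{3}_0(F) \xrightarrow{\eta} GW^{2}_{-1}(F) \longrightarrow K^Q_{-1}(F) = 0
\]
identifies $GW^2_{-1}(F)$ with $\coker(H_{0,3})$, so it suffices to argue that every element of $GW^{3}_0(F) = U_0(F)$ is hyperbolic, a classical computation of Karoubi's $U$-theory for fields of characteristic $\neq 2$. Equivalently, one may invoke the identification of negative-index Grothendieck-Witt groups of a regular ring with Balmer's triangular Witt groups, under which the iso class above corresponds to $W^{-1}(F) = W^{3}(F)$; this vanishes for fields by the standard computation of Balmer-Witt groups of a field (where only $W^{0}(F)$ is non-trivial).
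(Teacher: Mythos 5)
Your proof is correct and rests on exactly the same essential input as the paper's: reduce via Lemma \ref{lem:contractionofgw} to a negative-index Grothendieck--Witt sheaf, pass to sections over fields by strict $\aone$-invariance, and use Karoubi periodicity together with the surjectivity of the hyperbolic map $\K^Q_0 \to \mathbf{GW}^3_0$ on fields (the reference \cite[Lemma 4.1]{Fasel08c}, equivalently the vanishing of the odd Balmer--Witt group of a field). The only difference is in packaging: the paper stops contracting at $\mathbf{GW}^3_0$, observes it is an epimorphic image of the constant sheaf $\K^Q_0 = \Z$, and kills all further contractions in one stroke using exactness of contraction, whereas you unfold everything down to $GW^{2}_{-1}(F)$ by iterating the Karoubi sequence at negative $K$-theoretic degree --- the two are the same computation.
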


\begin{proof}
By Lemma \ref{lem:contractionofgw}, we know that $(\mathbf{GW}^i_{i+1})_{-i-1} \cong \mathbf{GW}^3_0$.  To establish the proposition, it suffices to show that the contractions of $\mathbf{GW}^3_0$ are trivial. To this end, recall that the hyperbolic map $\K^Q_0 \to \mathbf{GW}^3_0$ becomes surjective on sections over fields by \cite[Lemma 4.1]{Fasel08c}.  Since the morphism $\K^Q_0 \to \mathbf{GW}^3_0$ is a morphism of strictly $\aone$-invariant sheaves it is thus an epimorphism (see Subsection \ref{ss:strictlyaoneinvariant}).  The claim now follows from exactness of contractions and the fact that contractions of a constant sheaf (here, $\K^Q_0=\Z$) are trivial.
\end{proof}

\subsection{Actions of $\gm{}$}
\label{ss:actionsongwsheaves}
For any scheme $X$ and any integers $i,j\in\N$, there is a multiplication map
\[
GW_0^0(X)\times GW_i^j(X) \longrightarrow GW_i^j(X)
\]
defined in \cite[\S 5.4]{SchlichtingHKDE}.  The map associating to a unit $a \in \O_X(X)^{\times}$ the bilinear form $\O_X\times \O_X\to \O_X$ defined by $(x,y)\mapsto axy$ yields a morphism $\O_X(X)^{\times} \to GW^0_0(X)^{\times}$.  Composing these two maps defines, functorially, an action of the sheaf $\gm{}$ on the sheaves $\mathbf{GW}_i^j$; we refer to this action as the {\em multiplicative action} of $\gm{}$ on $\mathbf{GW}^j_i$.

The $\gm{}$-action on ${\mathbf {GW}}_i^j$ deduced from the natural action on the contracted sheaf $({\mathbf {GW}}_{i+1}^{j+1})_{-1}$ via the isomorphism of Lemma \ref{lem:contractionofgw} coincides with the action of the previous paragraph \cite[Lemma 4.6]{AsokFaselThreefolds}.  It follows from this that the twisted Gersten complex of $\mathbf{GW}^j_i(\L)$ as discussed in \cite[Remark 4.13-14]{MField}, coincides with the one defined in \cite[Theorem 25, Proposition 28]{FaselSrinivas}.  One can also define a sheaf on the small \'etale site of $X$ as the Nisnevich sheaf associated with the presheaf $U\mapsto GW_i^j(U,\L\vert_{U})$.  It follows from the references just provided, that this sheaf also coincides with $\mathbf{GW}_i^j(\L)$ and thus the notation is unambiguous.

We now provide an alternate ``more geometric" action of $\gm{}$ on $\mathbf{GW}_i^3$. For any ring $R$ and any unit $\lambda\in R^\times$, write $\gamma_{2n,\lambda}$ for the invertible $2n\times 2n$ matrix defined inductively by setting $\gamma_{2,\lambda}:=\mathrm{diag}(\lambda,1)$ and $\gamma_{2n,\lambda}:=\gamma_{2n-2,\lambda}\perp \gamma_{2,\lambda}$. Conjugation by $\gamma_{2n,\lambda}^{-1}$ yields an action of $\gm{}$ on $GL_{2n}$ for any $n\in\N$.

The transition maps $GL_{2n}\to GL_{2n+2}$ defined above are equivariant for this action, it follows that $GL=\colim_n GL_{2n}$ is also endowed with an action of $\gm{}$. Since conjugation by $\gamma_{2n,\lambda}^{-1}$ preserves $Sp_{2n}(R)$, there is an induced action of $\gm{}$ on the quotient $GL_{2n}/Sp_{2n}$ for which the quotient map is $\gm{}$-equivariant. Again, the transition maps $GL_{2n}/Sp_{2n} \to GL_{2n+2}/Sp_{2n+2}$ are $\gm{}$-equivariant and it follows that both $Sp$ and $GL/Sp$ are endowed with $\gm{}$-actions.  In particular, since the homotopy sheaves of $GL/Sp$ are the Grothendieck-Witt sheaves $\mathbf{GW}^3_i$ by Proposition \ref{prop:geometricGW3}, passing to homotopy sheaves we obtain action maps
\[
\gm{}\times \mathbf{GW}_i^3 \longrightarrow \mathbf{GW}_i^3
\]
for any $i\geq 1$.  We will refer to this action of $\gm{}$ on $\mathbf{GW}^3_i$ as the {\em conjugation action}.  The next result shows that the two actions considered above coincide.

\begin{prop}
\label{prop:multiplicativeGLSP}
The multiplicative action and conjugation action of $\gm{}$ on $\mathbf{GW}^3_i$ coincide.
\end{prop}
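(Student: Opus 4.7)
The plan is to verify the equality of the two actions by working on sections over fields. Both actions give morphisms of strictly $\aone$-invariant sheaves $\gm{} \wedge \mathbf{GW}^3_i \to \mathbf{GW}^3_i$, and such morphisms are determined by their behaviour on finitely generated field extensions of $k$ by the discussion in Subsection \ref{ss:strictlyaoneinvariant}. So I would fix a finitely generated extension $F/k$, an integer $i \geq 1$, and a unit $\lambda \in F^{\times}$, and compare the two induced endomorphisms of $GW^3_i(F)$.

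The key input is the elementary matrix identity
\[
\gamma_{2n,\lambda}^T\, \omega_{2n}\, \gamma_{2n,\lambda} \;=\; \lambda\,\omega_{2n},
\]
where $\omega_{2n} = \bigoplus_n \bigl(\begin{smallmatrix} 0 & 1 \\ -1 & 0 \end{smallmatrix}\bigr)$ is the standard symplectic form (a direct $2\times 2$-block computation). Geometrically, after identifying $GL_{2n}/Sp_{2n}$ with the orbit of $\omega_{2n}$ under the action $g \cdot \eta = g^T \eta g$ via $g \mapsto g^T \omega_{2n} g$, the conjugation action of $\gamma_{2n,\lambda}^{-1}$ on the quotient factors as (i) right-translation by $\gamma_{2n,\lambda}$ on $GL_{2n}$, followed by (ii) scaling of the resulting symplectic form by $\lambda^{-1}$. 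The first operation is $\aone$-homotopic to the identity after stabilising in $n$, since right-translation by a fixed element of the $\aone$-connected H-space $\colim_n GL_{2n}$ is $\aone$-homotopic to the identity in a manner compatible with the quotient. Consequently, at the level of $\aone$-homotopy sheaves, the conjugation action of $\lambda$ agrees with the action obtained by rescaling the base symplectic form by $\lambda$ (with appropriate sign bookkeeping via the sign conventions of Subsection \ref{ss:classifyingspaces}).

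Next, I would use the Schlichting--Tripathi identification $GL/Sp \simeq \Omega_s^1 \mathscr{GW}^3$ and the product structure on Hermitian $K$-theory recalled in Subsection \ref{ss:actionsongwsheaves}: tensoring a symplectic form by the rank-one form $\langle \lambda \rangle$ is precisely rescaling it by $\lambda$, so this rescaling operation on $\Omega_s^1 \mathscr{GW}^3$ induces multiplication by $\langle \lambda \rangle \in GW^0_0(F)$ on $GW^3_i(F)$. By definition, this is the multiplicative action. Comparing with the previous step establishes the desired equality.

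I expect the main obstacle to be in rigorously matching the geometric rescaling operation on symplectic forms with the GW-theoretic multiplication by $\langle \lambda \rangle$ at the level of the Schlichting--Tripathi model, and in checking that the resulting identifications are compatible with the stabilisation maps $GL_{2n}/Sp_{2n} \to GL_{2n+2}/Sp_{2n+2}$ (which insert the block $\mathrm{Id}_2$) and with the colimit defining $\mathscr{GW}^3$. Once this equivariant comparison of models is in place, the matrix identity above forces the two actions to agree, and the compatibility with \cite[Lemma 4.6]{AsokFaselThreefolds} (which relates the multiplicative action to the contraction action) provides a useful cross-check.
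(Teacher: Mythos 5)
Your proposal shares the same geometric heart as the paper's argument --- the crucial matrix identity $\gamma_{2n,\lambda}^{T}\,\omega_{2n}\,\gamma_{2n,\lambda} = \lambda\,\omega_{2n}$ is indeed what makes everything go, and both your approach and the paper's ultimately realize the equality of the two actions by recognizing the conjugating matrix $\gamma_{2n,\lambda}$ as an isometry between the standard form and its rescaling. However, there is a genuine error at the step you flag as crucial.

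You assert that ``right-translation by a fixed element of the $\aone$-connected H-space $\colim_n GL_{2n}$ is $\aone$-homotopic to the identity.'' This fails for two reasons. First, $\colim_n GL_{2n}$ is \emph{not} $\aone$-connected: its sheaf of $\aone$-connected components is $\bpi_0^{\aone}(GL) \cong \gm{}$ (over a field $F$ these are sections $K_1(F) = F^{\times}$; it is $SL = \colim_n SL_{2n}$ that is $\aone$-connected). Second, and more pointedly, the matrix $\gamma_{2n,\lambda}$ has determinant $\lambda^{n}$ and thus does not lie in the identity component: it represents the nontrivial class $\lambda^{n} \in K_1$. So a component/path argument cannot show translation by it is homotopically trivial, and the proposed justification collapses. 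Notice also that your decomposition is slightly confused: under the orbit identification $gSp_{2n} \leftrightarrow g\omega_{2n}g^{T}$, \emph{right}-translation by $\gamma_{2n,\lambda}$ is already precisely ``scale the form by $\lambda$'' (since $\gamma_{2n,\lambda}\omega_{2n}\gamma_{2n,\lambda}^{T} = \lambda\omega_{2n}$), while \emph{left}-translation by $\gamma_{2n,\lambda}^{-1}$ is the unaddressed change-of-basis piece; you have labelled the scaling step twice and the genuinely problematic translation step not at all.

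The paper sidesteps this by never claiming the translation is trivial in isolation. Instead it works with models of $BSp$ and $BGL$ as group-completions of symmetric monoidal categories and produces an explicit \emph{natural isomorphism} between the functor ``conjugation by $\gamma_{2n,t}$'' and the functor ``tensor with $\langle t\rangle$'' (this is \cite[Lemma 4.7]{AsokFaselThreefolds}; the component of the natural transformation at $h_{2n}$ is exactly the isometry $\gamma_{2n,t}$ furnished by your matrix identity). A natural isomorphism between symmetric monoidal functors gives a simplicial homotopy on classifying spaces, no connectivity assumption required. The paper then observes that both actions are trivial on $BGL$ via the forgetful functor $\mathscr{S}\to\mathscr{F}_{ev}$, identifies $GL/Sp$ as the simplicial homotopy fiber of $BSp\to BGL$, and restricts the homotopy to the fiber. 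If you want to rescue your argument, this natural-isomorphism step (or some equivalent device such as a $\gm{}$-equivariant zig-zag of models) is what needs to replace the $\aone$-connectedness claim.
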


\begin{proof}
In rough outline the proof goes as follows: we identify $GL/Sp$ as the simplicial homotopy fiber of a map $BSp \to BGL$, describe the multiplicative and conjugation actions on $BSp$ and $BGL$ individually, and construct a homotopy between these two actions compatible with the map $BSp \to BGL$.  This homotopy may be lifted to the simplicial homotopy fiber to provide a homotopy between the multiplication and conjugation actions there.

First, we recall the construction of another model of the space $BGL$.  To this end, suppose $R$ is a smooth $k$-algebra.  Write $\mathscr{F}_{ev}$ for the symmetric monoidal category whose objects are even-rank (finitely generated) free $R$-modules and where morphisms are isomorphisms.  The inclusion $\iota: \mathscr{F}_{ev} \hookrightarrow {\mathscr P}$ into the symmetric monoidal category $\mathscr{P}$ of all (finitely generated) projective $R$-modules (again, morphisms are isomorphisms) is cofinal \cite[p. 337]{Weibel} since every projective module is a direct summand of an even rank free module.  We can form the group completions $B\mathscr{F}_{ev}^{-1}\mathscr{F}_{ev}$ and $B \mathscr{P}^{-1}\mathscr{P}$ \cite[Chapter IV Definitions 4.2-3]{Weibel} and the map induced by the inclusion functor $\iota$ is an isomorphism on higher homotopy groups by \cite[Cofinality Theorem 4.11]{Weibel}.

Write $(\mathscr{F}_{ev}^{-1}\mathscr{F}_{ev})_0$ for the connected component of the base-point in $B\mathscr{F}_{ev}^{-1}\mathscr{F}_{ev}$.  The map $BGL \to (\mathscr{F}_{ev}^{-1}\mathscr{F}_{ev})_0$ is the map $\operatorname{hocolim}_{n \in {\mathbb N}}BAut(R^{\oplus 2n}) \to (\mathscr{F}_{ev}^{-1}\mathscr{F}_{ev})_0$ defined on objects by $n \mapsto (R^{\oplus 2n},R^{\oplus 2n})$ and on morphisms $f \in Aut(R^{\oplus 2n})$ by $f \mapsto (1,f)$.  The functor $\iota$ induces a map $B\mathscr{F}_{ev}^{-1}\mathscr{F}_{ev} \to B \mathscr{P}^{-1}\mathscr{P}$.  If $(\mathscr{P}^{-1}\mathscr{P})_0$ is the connected component of the base-point in $B \mathscr{P}^{-1}\mathscr{P}$, the induced map $(\mathscr{F}_{ev}^{-1}\mathscr{F}_{ev})_0 \to (\mathscr{P}^{-1}\mathscr{P})_0$ is a weak equivalence (since projective modules over smooth local $k$-algebras are free) \cite[Theorem 4.9 and Corollary 4.11.1]{Weibel}.  Note: the map $B\mathscr{F}_{ev}^{-1}\mathscr{F}_{ev} \to B \mathscr{P}^{-1}\mathscr{P}$ is not a weak equivalence because the induced map on $\pi_0$ corresponds to the inclusion $2 \Z \subset \Z$.

Next, we recall a construction of a model for the space $BSp$.  Write $h_{2n}$ for the standard hyperbolic space of dimension $2n$ over $R$.  Write $\mathscr{S}$ for the symmetric monoidal category whose objects are $h_{2n}$ and where morphisms are isometries.  Write $\mathscr{M}$ for the symmetric monoidal category whose objects are pairs $(P,\varphi)$, where $P$ is a (finitely generated) projective $R$-module equipped with a non-degenerate skew-symmetric form $\varphi$ and with morphisms given by isometries and, as above, $\iota: \mathscr{S} \to \mathscr{M}$ for the inclusion functor.  By \cite[Theorem A.1]{SchlichtingHKDE}, the groups $GW^2_i(R)$ can be identified as the homotopy groups of $B{\mathscr M}^{-1}\mathscr{M}$.  As above, write $({\mathscr M}^{-1}\mathscr{M})_0$ for the connected component of the base-point and repeating the previous discussion, the map $c: BSp \to (\mathscr{S}^{-1}\mathscr{S})_0$ is the map $\operatorname{hocolim}_{n \in \mathbb{N}} BAut(h_{2n}) \to (\mathscr{S}^{-1}\mathscr{S})_0$ defined on objects by $n \mapsto (h_{2n},h_{2n})$ and takes $f \in Aut(h_{2n})$ to $(1,f)$.  Again, the map $B\mathscr{S}^{-1}\mathscr{S} \to B{\mathscr M}^{-1}\mathscr{M}$ is a weak equivalence on smooth local $k$-algebras $R$ because all non-degenerate skew-symmetric forms over $R$ are hyperbolic.

Now, we argue as in \cite[Lemma 4.7]{AsokFaselThreefolds}.  First, observe that the multiplication action on $BSp$ can be given at the space level by the following construction.  For $t \in R^{\times}$, we define a strict symmetric monoidal functor $\cdot \tensor \langle t \rangle: \mathscr{M} \to \mathscr{M}$ on objects by $(P,\varphi) \tensor \langle t \rangle := (P,t\varphi)$ and act by the identity on morphisms.  The induced action of $t \in R^{\times}$ on $({\mathscr M}^{-1}\mathscr{M})_0$, which can be identified with $(\mathscr{S}^{-1}\mathscr{S})_0$ sends $(h_{2n},h_{2n})$ to $(h_{2n} \tensor \langle t \rangle,h_{2n} \tensor \langle t \rangle)$ and $(f,g) \mapsto (f,g)$.

Second, recall the homotopy between the multiplication and conjugation actions on $BSp$.  To this end, consider the diagram
\[
\xymatrix{
BSp \ar[r] \ar[d]^c& BSp \ar[d]^c \\
(\mathscr{S}^{-1}\mathscr{S})_0 \ar[r] & (\mathscr{S}^{-1}\mathscr{S})_0,
}
\]
where the top map is induced by conjugation with $\gamma_{2n,t}$ and the bottom map is the action of $t$ on $(\mathscr{S}^{-1}\mathscr{S})_0$ described above.  While this diagram is not strictly commutative, it is commutative up to natural isomorphism as observed in \cite[Lemma 4.7]{AsokFaselThreefolds}.  Thus, the action of conjugation by $\gamma_{2n,t}$ is homotopic to the multiplication by $t$.

Third, observe that the multiplication action on $BGL$ is trivial.  Indeed, there is a symmetric monoidal functor $\mathscr{S} \to \mathscr{F}_{ev}$ given by forgetting the hyperbolic form.  The composite of $\cdot \tensor \langle t \rangle$ with the forgetful functor is the identity functor on $\mathscr{F}_{ev}$.  The argument used in the previous paragraph, repeated in the context of $BGL$, shows that the conjugation action on $BGL$ is trivial up to homotopy.

Finally, observe that one can take as a model of $GL/Sp$ the simplicial homotopy fiber of the map $(\mathscr{S}^{-1}\mathscr{S})_0 \to (\mathscr{F}_{ev}^{-1}\mathscr{F}_{ev})_0$ by \cite[Lemma 8.3 and Theorem 8.4]{SchlichtingTripathi}.  We thus obtain a description of the multiplication action on $GL/Sp$ by restriction.  By the compatibility of the actions just constructed with the forgetful map, the homotopy between the multiplication and conjugation actions on $BSp$ restricts to a homotopy between the corresponding actions on $GL/Sp$, which provides the required identification.
\end{proof}

\subsection{Bounding $H^n_{\mathrm{Nis}}(X,\mathbf{GW}_n^{n-1}(\L))$}
\label{ss:boundinghngw}
Let $X$ be a smooth scheme of dimension $d$ and let $\L$ be a line bundle over $X$.  We now compute the cohomology group $H^n_{\mathrm{Nis}}(X,\mathbf{GW}_n^{n-1}(\L))$ following the lines of \cite[\S 4]{AsokFaselThreefolds}, i.e., by analyzing the Gersten resolution of $\mathbf{GW}^{n-1}_n(\L)$.

Sheafifying the Karoubi periodicity sequences for the Nisnevich topology, we obtain a long exact sequence of sheaves of the form
\begin{equation}\label{eqn:Karoubi}
\xymatrix@C=3.5em{\K_n^Q\ar[r]^-{H_{n,n-1}} & \mathbf{GW}_n^{n-1}(\L)\ar[r]^-\eta & \mathbf{GW}_{n-1}^{n-2}(\L)\ar[r]^-{f_{n-1,n-2}} & \K_{n-1}^Q.
}
\end{equation}
Observe that the sheaves that appear in the above sequence admit Gersten resolutions by the discussion of Subsection \ref{ss:twists}.

We set
\[
\begin{split}
\mathbf{A}(\L) &:= \operatorname{Im}(H_{n,n-1}), \text{ and } \\
\mathbf{B}(\L) &:= \operatorname{Im}(\eta).
\end{split}
\]
Again, the sheaves $\mathbf{A}(\L)$ and $\mathbf{B}(\L)$ admit Gersten-resolutions.

Taking $n=3$ in (\ref{eqn:Karoubi}), we obtain an exact sequence of the form:
\[
\xymatrix@C=3.5em{\K_3^Q\ar[r]^-{H_{3,2}} & \mathbf{GW}_3^{2}(\L)\ar[r]^-\eta & \mathbf{GW}_{2}^{1}(\L)\ar[r]^-{f_{2,1}} & \K_2^Q};
\]
this sequence is precisely \cite[Sequence (4.3)]{AsokFaselThreefolds} and this observation is used implicitly in the subsequent proofs.

\begin{notation}
For any smooth scheme $X$ and any integer $n\in\N$, set $Ch^n(X) := CH^n(X)/2$ where $CH^n(X)$ is the Chow group of codimension $n$ cycles on $X$.
\end{notation}

If $\K^Q_n/2$ is the mod $2$ Quillen K-theory sheaf, i.e., the cokernel of the multiplication by $2$ map $\K^Q_n \to \K^Q_n$, then $H^n(X,\K^Q_n/2) \cong Ch^n(X)$ via Bloch's formula.

\begin{lem}
\label{lem:aofl}
The hyperbolic morphism $H_{n,n-1}:\K_n^Q\to \mathbf{A}(\L)$ induces an isomorphism $H^n_{\mathrm{Nis}}(X,\K_n^Q/2)\cong  H^n_{\mathrm{Nis}}(X,\mathbf{A}(\L))$.
\end{lem}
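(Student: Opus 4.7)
The plan is to derive the cohomological isomorphism from the short exact sequence of strictly $\aone$-invariant sheaves
\[
0\longrightarrow \operatorname{Im}(f_{n,n-2})\longrightarrow \K_n^Q\longrightarrow \mathbf{A}(\L)\longrightarrow 0,
\]
extracted from (\ref{eqn:Karoubi}), combined with Bloch's formula and a computation at the top of the Rost--Schmid complex.

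Bloch's formula gives $H^n_{\Nis}(X,\K_n^Q)\cong CH^n(X)$, and combined with the short exact sequence $0\to\K_n^Q\xrightarrow{2}\K_n^Q\to\K_n^Q/2\to 0$ it identifies $H^n_{\Nis}(X,\K_n^Q/2)\cong CH^n(X)/2=Ch^n(X)$. From the long exact sequence in Nisnevich cohomology associated with the first SES, it suffices to show that the boundary $H^n(X,\operatorname{Im}(f_{n,n-2}))\to H^n(X,\K_n^Q)=CH^n(X)$ has image precisely $2\cdot CH^n(X)$, together with injectivity of $H^{n+1}(X,\operatorname{Im}(f_{n,n-2}))\to H^{n+1}(X,\K_n^Q)$ to guarantee surjectivity of $H^n(X,\K_n^Q)\to H^n(X,\mathbf{A}(\L))$; this surjection, induced by $H_{n,n-1}$, will then factor through the desired iso with $Ch^n(X)=H^n(X,\K_n^Q/2)$.

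To pin down the image, I would apply Lemmas \ref{lem:contractionofgw} and \ref{lem:karoubiperiodicitycontractions} together with exactness of contractions on the category of strictly $\aone$-invariant sheaves to obtain
\[
(\operatorname{Im}(f_{n,n-2}))_{-n} \;=\; \operatorname{Im}\bigl(f_{0,-2}\colon\mathbf{GW}_0^{-2}(\L) \longrightarrow \K_0^Q\bigr).
\]
By the $4$-periodicity of $GW^j$ in the index $j$, $\mathbf{GW}_0^{-2}(\L)$ is identified with symplectic $K$-theory $K_0Sp(-,\L)$, whose forgetful map to $K_0=\mathbb{Z}$ records the underlying rank of a symplectic bundle. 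Since every symplectic bundle has even rank, the image on sections over any field is precisely $2\mathbb{Z}$, matching multiplication by $2$ on $\K_n^Q$ at the top of the Rost--Schmid complex. The main obstacle is handling the interaction with lower-codimension pieces of the complex --- in particular the codim-$(n-1)$ contraction $(\operatorname{Im}(f_{n,n-2}))_{-(n-1)}=\operatorname{Im}(f_{1,-1})$, where the duality on $K_1$ is inversion rather than the identity and so the analogous composition is subtler than on $K_0$; I expect this compatibility, and the required injectivity/surjectivity checks on the long exact sequence, to be handled by further applications of Lemma \ref{lem:karoubiperiodicitycontractions} combined with a careful analysis of the Rost--Schmid differentials.
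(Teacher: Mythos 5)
Your strategy is the right one, and it essentially matches the route the paper takes (its proof is a one-line reference to \cite[Lemma 4.13]{AsokFaselThreefolds}, which is also an analysis of the Gersten/Rost--Schmid complex of $\mathbf{A}(\L)$ via contractions). The contraction computation $(\operatorname{Im}(f_{n,n-2}))_{-n}=\operatorname{Im}(f_{0,-2})$, together with the identification of $f_{0,-2}\colon \mathbf{GW}_0^{-2}(\L)\to\K_0^Q$ with ``multiplication by $2$'' on sections over fields, is exactly the crucial input.

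Where you go wrong is in imagining that the codimension-$(n-1)$ piece, with its inversion duality on $K_1$, creates a genuine obstacle. It does not, and you do not need to look at it at all. Once you know $\ker:=\operatorname{Im}(f_{n,n-2})$ is a strictly $\aone$-invariant subsheaf of $\K_n^Q$ with $\ker_{-n}= 2\K_0^Q\subset\K_0^Q$ (on fields), two things follow immediately from the shape of the Rost--Schmid complex. First, since $\ker_{-(n+1)}\subset(\K_n^Q)_{-(n+1)}=\K^Q_{-1}=0$, the Rost--Schmid complex of $\ker$ has no term in degree $n+1$, so $H^{n+1}_{\Nis}(X,\ker)=0$; the long exact sequence then gives surjectivity of $CH^n(X)=H^n_{\Nis}(X,\K_n^Q)\to H^n_{\Nis}(X,\mathbf{A}(\L))$ with no injectivity-checking at all. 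Second, $H^n_{\Nis}(X,\ker)$ and $H^n_{\Nis}(X,\K_n^Q)$ are both \emph{cokernels} of the respective $(n-1)$-differentials (again because the complexes terminate at $n$), and the map between them is induced by a map of complexes; consequently the image of $H^n_{\Nis}(X,\ker)$ in $CH^n(X)$ is exactly the image of the degree-$n$ term $\bigoplus_{x\in X^{(n)}}2\Z\hookrightarrow\bigoplus_{x\in X^{(n)}}\Z$, i.e.\ $2\,CH^n(X)$, regardless of what the degree-$(n-1)$ differential does. Hence $H^n_{\Nis}(X,\mathbf{A}(\L))\cong CH^n(X)/2CH^n(X)=Ch^n(X)=H^n_{\Nis}(X,\K_n^Q/2)$, compatibly with $H_{n,n-1}$. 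The subtlety about inversion-duality on $K_1$ is genuine if you wanted to understand $\operatorname{Im}(f_{1,-1})$, but the lemma never asks you to.
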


\begin{proof}
The proof of \cite[Lemma 4.13]{AsokFaselThreefolds} applies mutatis mutandis.
\end{proof}

\begin{lem}
\label{lem:bofl}
The $(n-3)$rd contraction of the hyperbolic morphism
\[
H_{n-1,n-2}:\K_{n-1}^Q \longrightarrow \mathbf{GW}_{n-1}^{n-2}(\L)
\]
induces a morphism $H^\prime_{2,1}:\K_{2}^Q\to \mathbf{B}(\L)_{3-n}$, which in turn induces an isomorphism $Ch^{n-1}(X)\cong H^{n-1}_{\mathrm{Nis}}(X,\K_{n-1}^Q/2)\cong H^{n-1}_{\mathrm{Nis}}(X,\mathbf{B}(\L))$. Moreover, $H^{n}_{\mathrm{Nis}}(X,\mathbf{B}(\L))=0$.
\end{lem}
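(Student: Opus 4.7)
The strategy is to adapt the proof of Lemma \ref{lem:aofl} to the complementary portion of the Karoubi periodicity exact sequence (\ref{eqn:Karoubi}), working in the abelian category of strictly $\aone$-invariant sheaves and comparing Gersten (Rost--Schmid) resolutions with the aid of Bloch's formula. The proof splits into three parts: (i) constructing the morphism $H'_{2,1}$; (ii) establishing the cohomology isomorphism; and (iii) proving the vanishing of $H^n_{\Nis}(X,\mathbf{B}(\L))$.

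For (i), iterated application of Lemmas \ref{lem:contractionofgw} and \ref{lem:karoubiperiodicitycontractions} shows that $(n-3)$-fold contraction of (\ref{eqn:Karoubi}) produces the Karoubi sequence at indices $(i,j)=(3,2)$,
\[
\K_3^Q \xrightarrow{H_{3,2}} \mathbf{GW}_3^2(\L) \xrightarrow{\eta} \mathbf{GW}_2^1(\L) \xrightarrow{f_{2,1}} \K_2^Q,
\]
identifying $\mathbf{B}(\L)_{3-n}$ with $\operatorname{Im}(\eta)=\ker(f_{2,1})$. The contracted hyperbolic morphism $H_{2,1}\colon\K^Q_2\to \mathbf{GW}_2^1(\L)$ is produced by Lemma \ref{lem:karoubiperiodicitycontractions}, and an analysis of the full Karoubi periodicity sequence at the relevant indices (tracking the image of $H_{2,1}$ relative to $\ker(f_{2,1})$ via the Hermitian $K$-theory relation $f\circ H = 1+\sigma$, and exploiting the principle of Subsection \ref{ss:strictlyaoneinvariant} that strictly $\aone$-invariant sheaves are controlled by their fibers on finitely generated extensions of $k$) then produces the factorization $H'_{2,1}\colon \K^Q_2\to \mathbf{B}(\L)_{3-n}$.

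For (ii), both $\K^Q_{n-1}/2$ and $\mathbf{B}(\L)$ are strictly $\aone$-invariant (subobjects and quotients in this abelian category being strictly $\aone$-invariant) and admit Gersten (Rost--Schmid) resolutions. By Bloch's formula, $H^{n-1}_{\Nis}(X,\K^Q_{n-1}/2)\cong Ch^{n-1}(X)$. For $\mathbf{B}(\L)$, the Gersten term at a codimension-$p$ point $x$ has the form $\mathbf{B}(\L)_{-p}(k(x),\omega_x)$ for an appropriate line bundle twist $\omega_x$; by iterating Lemma \ref{lem:contractionofgw}, the term in degree $n-1$ becomes the image of an $\eta$-map between Grothendieck--Witt groups over $k(x)$ whose cokernel is identified, via the $f\circ H = 1+\sigma$ relation and the classical structure of Hermitian $K$-theory over fields, with $\Z/2$. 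The map of Gersten complexes induced by $H'_{2,1}$ (propagated through further contractions) is then the canonical mod-$2$ projection, yielding the asserted isomorphism.

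For (iii), the top Gersten term of $\mathbf{B}(\L)$ at a codimension-$n$ point $x$ is $\mathbf{B}(\L)_{-n}(k(x),\omega_x)$, which by repeated application of Lemma \ref{lem:contractionofgw} coincides with $\operatorname{Im}(\eta\colon\mathbf{GW}^{-1}_0(\L)|_x\to \mathbf{GW}^{-2}_{-1}(\L)|_x)$. A direct analogue of the argument of Proposition \ref{prop:vanishingofcontractions} (using, as there, the surjectivity over fields of the hyperbolic morphism $\K^Q_0\to\mathbf{GW}^3_0$) shows this sheaf is trivial, whence the top Gersten term vanishes and $H^n_{\Nis}(X,\mathbf{B}(\L))=0$. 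The main obstacle lies in Step (ii): correctly accounting for the $\L$-twist under iterated contraction (using Proposition \ref{prop:multiplicativeGLSP} to reconcile the conjugation and multiplicative $\gm{}$-actions on Grothendieck--Witt sheaves) and verifying that the induced map of Gersten complexes is genuinely the mod-$2$ projection, rather than some multiple.
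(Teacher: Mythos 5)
The paper's own proof is a one-line citation to \cite[Lemmas 4.13--4.15]{AsokFaselThreefolds}; your proposal is an attempt to reconstruct those cited arguments from scratch. Your overall strategy---contracting the Karoubi sequence (\ref{eqn:Karoubi}) to the $(3,2)$ slot via Lemmas~\ref{lem:contractionofgw} and~\ref{lem:karoubiperiodicitycontractions} and then comparing Gersten (Rost--Schmid) resolutions over residue fields---is the right one and matches what those lemmas establish in the special case $n=3$. But several of the load-bearing steps in your sketch are asserted rather than proved, and a couple of the pointers are off.

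First, the factorization $\operatorname{Im}(H_{2,1})\subset\ker(f_{2,1})=\mathbf{B}(\L)_{3-n}$ is not a formal consequence of ``$f\circ H=1+\sigma$''; it requires verifying that $f_{2,1}\circ H_{2,1}=0$, which in turn requires computing that the $1$-shifted duality involution acts by $-1$ on $\K^Q_2$ of a field (so the composite is $1+(-1)=0$). That sign computation does work out, but without it the factorization is unsubstantiated. Second, in your part (ii) you call the degree-$(n-1)$ Gersten term a \emph{cokernel}; it is an \emph{image} of an $\eta$-map, namely $\mathbf{B}(\L)_{-(n-1)}=\operatorname{Im}(\eta\colon\mathbf{GW}^0_1\to\mathbf{GW}^{-1}_0)$. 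Identifying this with $\Z/2$ over a field needs the vanishing of negative $K$-theory (so $\eta$ is surjective onto $\mathbf{GW}^{-1}_0=\mathbf{GW}^3_0$ by exactness) together with the fact, already used in the proof of Proposition~\ref{prop:vanishingofcontractions} via \cite[Lemma 4.1]{Fasel08c}, that $\mathbf{GW}^3_0$ is a quotient of $\K^Q_0=\Z$; appealing to the ``classical structure of Hermitian $K$-theory over fields'' papers over this. Third, your sketch establishes surjectivity of the comparison map on $H^{n-1}$ but not injectivity; you need to examine the degree-$(n-2)$ Gersten terms of both $\K^M_{n-1}/2$ and $\mathbf{B}(\L)$, which you omit. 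Fourth, Proposition~\ref{prop:multiplicativeGLSP} identifies two $\gm{}$-actions on $\mathbf{GW}^3_i$ and is not the device controlling line-bundle twists in the Gersten resolution; the relevant input is the discussion in Subsection~\ref{ss:actionsongwsheaves} that $\mathbf{GW}^j_i(\L)$ admits a Gersten resolution agreeing with \cite{FaselSrinivas}. Your part (iii) is essentially correct once stated cleanly: surjectivity of $H_{0,-1}\colon\K^Q_0\to\mathbf{GW}^{-1}_0$ on fields forces $\eta\colon\mathbf{GW}^{-1}_0\to\mathbf{GW}^{-2}_{-1}$ to vanish on fields by exactness of the Karoubi sequence, hence $\mathbf{B}(\L)_{-n}=0$, killing the top Gersten term.
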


\begin{proof}
This follows from \cite[Lemmas 4.13-4.15]{AsokFaselThreefolds}.
\end{proof}

By definition, there is a short exact sequence of strictly $\aone$-invariant sheaves of the form:
\[
0 \longrightarrow \mathbf{A}(\L) \longrightarrow \mathbf{GW}_n^{n-1}(\L) \longrightarrow \mathbf{B}(\L)\to 0.
\]
Taking cohomology and applying Lemmas \ref{lem:aofl} and \ref{lem:bofl}, one obtains the following result.

\begin{prop}
\label{prop:surjectionfromchowmod2}
If $k$ is a field having characteristic unequal to $2$, and if $X$ is a smooth $k$-scheme, there is an exact sequence of the form
\[
Ch^{n-1}(X) \stackrel{\partial}{\longrightarrow} Ch^n(X) \longrightarrow H^n_{\mathrm{Nis}}(X,\mathbf{GW}_n^{n-1}(\L))\longrightarrow 0.
\]
\end{prop}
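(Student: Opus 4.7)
The plan is to derive the exact sequence by a short computation with the long exact cohomology sequence attached to the short exact sequence of strictly $\aone$-invariant sheaves
\[
0 \longrightarrow \mathbf{A}(\L) \longrightarrow \mathbf{GW}_n^{n-1}(\L) \longrightarrow \mathbf{B}(\L) \longrightarrow 0
\]
that is displayed just before the proposition. Since all three sheaves are strictly $\aone$-invariant (as subquotients of the strictly $\aone$-invariant sheaves appearing in the Karoubi periodicity sequence, which sits inside the abelian category of strictly $\aone$-invariant sheaves by \cite[Lemma 6.2.13]{MStable}), their Nisnevich cohomology on the smooth scheme $X$ fits into a genuine long exact sequence, and I can read off the desired four-term sequence from the relevant segment.

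First I would extract the segment
\[
H^{n-1}_{\Nis}(X,\mathbf{B}(\L)) \longrightarrow H^n_{\Nis}(X,\mathbf{A}(\L)) \longrightarrow H^n_{\Nis}(X,\mathbf{GW}_n^{n-1}(\L)) \longrightarrow H^n_{\Nis}(X,\mathbf{B}(\L))
\]
of the long exact cohomology sequence. Then I would substitute in the identifications provided by Lemmas \ref{lem:aofl} and \ref{lem:bofl}: Lemma \ref{lem:aofl} identifies $H^n_{\Nis}(X,\mathbf{A}(\L))$ with $H^n_{\Nis}(X,\K^Q_n/2)$, which is $Ch^n(X)$ via Bloch's formula; Lemma \ref{lem:bofl} identifies $H^{n-1}_{\Nis}(X,\mathbf{B}(\L))$ with $H^{n-1}_{\Nis}(X,\K^Q_{n-1}/2) \cong Ch^{n-1}(X)$ and, crucially, supplies the vanishing $H^n_{\Nis}(X,\mathbf{B}(\L)) = 0$ needed to make the right-hand term zero and obtain surjectivity onto $H^n_{\Nis}(X,\mathbf{GW}_n^{n-1}(\L))$.

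Since both lemmas are already in hand, no new calculation is required; the only thing to be careful about is that the boundary map $\partial$ on the left is the one induced from the long exact sequence via the two isomorphisms of Lemmas \ref{lem:aofl} and \ref{lem:bofl}, and that this map is well-defined up to these canonical identifications. There is no genuinely hard step here: the content of the proposition is a bookkeeping assembly of the two previous lemmas with the long exact sequence; the real work was done in establishing Lemmas \ref{lem:aofl} and \ref{lem:bofl} (and in the Karoubi periodicity input that produced $\mathbf{A}(\L)$ and $\mathbf{B}(\L)$).
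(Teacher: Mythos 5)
Your proposal is correct and follows exactly the paper's own argument: take the long exact cohomology sequence of the short exact sequence $0 \to \mathbf{A}(\L) \to \mathbf{GW}_n^{n-1}(\L) \to \mathbf{B}(\L) \to 0$, and substitute the identifications and vanishing supplied by Lemmas \ref{lem:aofl} and \ref{lem:bofl}. There is nothing to add; the bookkeeping is precisely what the paper does.
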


\subsection{Computing $H^n_{\mathrm{Nis}}(X,\mathbf{GW}_n^{n-1}(\L))$}
\label{ss:computinghngw}
While Proposition \ref{prop:surjectionfromchowmod2} is strong enough to imply the vanishing statement that appears in the course of our verification of Theorem \ref{thmintro:murthysplittingdimension4}, it is not hard to identify the connecting homomorphism $\partial$ explicitly.  The remainder of the section is devoted to this task; the discussion of this section will not be used explicitly elsewhere in the paper.

Recall that one can define Steenrod operations $Sq^2:Ch^n(X)\to Ch^{n+1}(X)$ (\cite[\S 8]{Brosnan03} or \cite{VRed}). If $\L$ is an invertible $\O_X$-module, write $c_1(\L)$ for its first Chern class in $Ch^1(X)$, and define twisted Steenrod operations
\[
Sq^2_{\L}:Ch^n(X) \longrightarrow Ch^{n+1}(X)
\]
by $Sq^2_{\L}(\alpha)=Sq^2(\alpha)+c_1(\L)\cdot \alpha$.

\begin{thm}\label{thm:cohomologyofgwandsteenrodsquares}
Let $X$ be a smooth scheme over a field $k$ such that $\mathrm{char}(k)\neq 2$. For any $n\in\N$, there is an exact sequence of the form
\[
Ch^{n-1}(X)\stackrel{Sq^2_{\L}}\longrightarrow Ch^n(X) \longrightarrow H^n_{\mathrm{Nis}}(X,\mathbf{GW}_n^{n-1}(\L)) \longrightarrow 0.
\]
\end{thm}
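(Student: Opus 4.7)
The theorem extends Proposition \ref{prop:surjectionfromchowmod2} by identifying the connecting homomorphism $\partial\colon Ch^{n-1}(X) \to Ch^n(X)$ with the twisted Steenrod square $Sq^2_{\L}$. Since the exactness and the surjectivity on the right are already in place, the task reduces to giving an explicit cycle-level description of $\partial$ and matching it with $Sq^2_{\L}$.

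The plan is to analyze $\partial$ through the short exact sequence
\[
0 \longrightarrow \mathbf{A}(\L) \longrightarrow \mathbf{GW}_n^{n-1}(\L) \longrightarrow \mathbf{B}(\L) \longrightarrow 0
\]
together with the isomorphisms $H^{n-1}_{\Nis}(X,\mathbf{B}(\L)) \cong Ch^{n-1}(X)$ and $H^n_{\Nis}(X,\mathbf{A}(\L)) \cong Ch^n(X)$ supplied by Lemmas \ref{lem:aofl} and \ref{lem:bofl}. First, I would unwind those two identifications: each passes through a hyperbolic morphism from $\K^Q_\ast$ and, on Gersten/Rost--Schmid complexes, reduces in the relevant degree to the standard mod-$2$ Chow cycle complex. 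Second, I would use the Karoubi periodicity sequence (\ref{eqn:Karoubi}) together with the Milnor conjecture on quadratic forms (proved by Orlov--Vishik--Voevodsky), which yields short exact sequences $0 \to \mathbf{I}^{m+1} \to \mathbf{I}^m \to \K^M_m/2 \to 0$, to compare the displayed sequence with the twisted analogues of these fundamental-ideal sequences, modulo contributions of $\K^Q_\ast$ arising from the hyperbolic and forgetful maps. This should exhibit $\partial$ as a Bockstein-type connecting homomorphism associated with a sequence of $\mathbf{I}$-sheaves, which in the untwisted case is known to compute $Sq^2$ on mod-$2$ Chow groups.

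The principal obstacle is producing the correction term $c_1(\L)\cdot\alpha$ in the twisted case. The expected source is the twisted Rost--Schmid differential described in Subsection \ref{ss:twists}: the difference between the differentials of $\mathbf{GW}_n^{n-1}$ and $\mathbf{GW}_n^{n-1}(\L)$ is controlled by $\langle a \rangle$-type corrections coming from transition functions of $\L$, and after reduction modulo $2$ these should contribute exactly $c_1(\L)$. Carrying this out rigorously requires careful bookkeeping of signs and trivializations of $\L$ at codimension-one points; Proposition \ref{prop:multiplicativeGLSP} is useful here because it ensures that the multiplicative $\gm{}$-action used to define the twist agrees with the conjugation action, which makes the relevant cocycle computations tractable. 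An alternative, more abstract route would be to test equality of $\partial$ and $Sq^2_{\L}$ on fundamental classes of smooth codimension-$(n-1)$ subschemes and reduce via deformation to the normal cone to the split case of a smooth divisor, where both maps admit a closed-form description in terms of $c_1$ of the normal bundle adjusted by $\L$.
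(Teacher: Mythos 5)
Your main route is genuinely different from the paper's. The paper's proof does not pass through a conceptual identification of $\partial$ as a Bockstein for the fundamental-ideal filtration. Instead it handles $n=0$ by citation, then for $n\geq 1$ observes that $Ch^{n-1}(X)$ is generated by classes of integral codimension-$(n-1)$ subvarieties $Y\subset X$, passes to the normalization $C\to Y$, removes points of codimension $\geq 2$ on $C$ (and $\geq n+1$ on $X$) so that $C$ may be assumed smooth without changing the cohomology groups computed by the Gersten resolution, and then evaluates $\partial$ directly on such generators by pushing forward, invoking the argument of \cite[Theorem 4.17]{AsokFaselThreefolds} verbatim. This is essentially your ``alternative route'' at the end, except the paper uses normalization and a generic-to-codimension-one shrinking rather than deformation to the normal cone; the $c_1(\L)$ twist then emerges mechanically from the pushforward through the Gersten complex (where transition data for $\L$ enter the twisted differential), rather than needing to be produced by an abstract comparison.

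Your primary route — comparing the Karoubi periodicity sequence with the Arason/Orlov--Vishik--Voevodsky sequences $0\to\mathbf{I}^{m+1}\to\mathbf{I}^m\to\K^M_m/2\to 0$ so as to exhibit $\partial$ as an $\mathbf{I}$-Bockstein that computes $Sq^2$ mod-$2$ on Chow groups — is conceptually appealing and is indeed the right heuristic for \emph{why} $Sq^2$ appears, but as written there is a gap: you never actually construct the isomorphism between the two connecting maps, and you yourself flag that the twist correction $c_1(\L)\cdot\alpha$ is the sticking point. Getting that term out of a diagram-chase at the level of sheaves requires matching the two $\gm{}$-actions precisely and tracking how the twisted Rost--Schmid differential deviates from the untwisted one; you gesture at Proposition \ref{prop:multiplicativeGLSP} for this, but that result concerns the $\mathbf{GW}^3_i$ sheaves and does not by itself produce the $c_1(\L)$ shift on $Ch^{n-1}\to Ch^n$. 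The cycle-level approach (the paper's, and your alternative) sidesteps this by evaluating on explicit subvariety generators, where the twist appears as a difference of normal-bundle data and the computation is finite. If you want to pursue the Bockstein route rigorously, the missing ingredient is a compatibility statement between the twisted hyperbolic/forgetful maps, the quotient map $\mathbf{GW}^{n-2}_{n-1}(\L) \to \mathbf{I}$-type sheaves, and their Gersten differentials; without it, the argument identifies the untwisted $\partial$ with $Sq^2$ but does not pin down the twisted one.
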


\begin{proof}
If $n=0$, then the result follows from \cite[Lemma 4.1]{Fasel08c}.  In case $n\geq 1$, we first observe that $Ch^{n-1}(X)$ is generated by the classes of integral subvarieties $Y$ of codimension $n-1$ in $X$. If $C$ is the normalization of $Y$, then we have a finite morphism $i:C\to Y\subset X$. If necessary, by removing points of codimension $\geq 2$ in $C$ and $n+1$ in $X$ (which, by inspection of the Gersten resolution, will not change the cohomology groups in which we are interested), we can assume that $C$ is smooth.  The rest of the proof is identical to that of \cite[Theorem 4.17]{AsokFaselThreefolds}.
\end{proof}

\section{Some homotopy sheaves of classical symmetric spaces}
\label{section:metastablecomputations}
In this section, we study the $\aone$-homotopy sheaves of $GL_{2n}/Sp_{2n}$.  In Proposition \ref{prop:connectivityofSL2nSp2ntoSLSp}, we establish a ``stable range" for $\bpi_i^{\aone}(GL_{2n}/Sp_{2n})$ by analyzing the connectivity of the stabilization morphism $GL_{2n}/Sp_{2n} \to GL/Sp$ (see Subsection \ref{ss:geometricrepresentability} for recollections about the latter space).  In Theorem \ref{thm:main}, we give a computation of the first non-stable $\aone$-homotopy sheaf of $GL_{2n}/Sp_{2n}$.  This computation yields, in particular, the computation of $\bpi_3^{\aone}({\mathbb A}^3 \setminus 0)$ mentioned in Theorem \ref{thmintro:pi3a3minus0}.  However, the more general computation has other applications, e.g., to obstructions to existence of algebraic symplectic structures on smooth varieties.


\subsection{The sheaf of connected components of $GL_{2n}/Sp_{2n}$}
\label{ss:connectedcomponents}
We first study the $\aone$-connected components of $GL_{2n}/Sp_{2n}$.

\begin{lem}
\label{lem:pfaffianone}
The inclusions $Sp_{2n} \hookrightarrow SL_{2n} \longrightarrow GL_{2n}$ yield a split $\aone$-fiber sequence of the form
\[
SL_{2n}/Sp_{2n} \longrightarrow GL_{2n}/Sp_{2n} {\longrightarrow} \gm{};
\]
a splitting is given by $t \mapsto diag(t,1\ldots,1)$. In particular, the first morphism is the inclusion of the $\aone$-connected component of the base-point.
\end{lem}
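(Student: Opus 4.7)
The plan is to exploit the determinant to split off a $\gm{}$ factor from $GL_{2n}/Sp_{2n}$ and then check $\aone$-connectedness of $SL_{2n}/Sp_{2n}$. First, I would note that since $Sp_{2n} \subset SL_{2n} = \ker(\det)$, the determinant $\det: GL_{2n} \to \gm{}$ descends to a well-defined morphism $GL_{2n}/Sp_{2n} \to \gm{}$. The assignment $t \mapsto \mathrm{diag}(t,1,\ldots,1)$ defines a morphism $\gm{} \to GL_{2n}$, which composed with the quotient provides the section claimed in the statement, since its composition with $\det$ is the identity.

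Next, I would use this section to show that the projection $GL_{2n}/Sp_{2n} \to \gm{}$ is not merely split but (Zariski-)globally trivial as a fiber bundle. Indeed, the morphism $SL_{2n} \times \gm{} \to GL_{2n}$ defined by $(M,t) \mapsto \mathrm{diag}(t,1,\ldots,1)\cdot M$ is an $SL_{2n}$-equivariant isomorphism of schemes (acting by left multiplication on the first factor). Passing to the quotient by the right action of $Sp_{2n}$, which only affects the $SL_{2n}$ factor, yields an isomorphism $GL_{2n}/Sp_{2n} \isomto (SL_{2n}/Sp_{2n}) \times \gm{}$ under which the map to $\gm{}$ becomes the second projection. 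A split product projection is an $\aone$-fiber sequence in any reasonable sense, giving the desired $SL_{2n}/Sp_{2n} \to GL_{2n}/Sp_{2n} \to \gm{}$.

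Finally, to identify the first morphism with the inclusion of the $\aone$-connected component of the base-point, I would show that $SL_{2n}/Sp_{2n}$ is $\aone$-connected. For this, since $Sp_{2n}$-torsors are Zariski-locally trivial (as $Sp_{2n}$ is special), the sequence $Sp_{2n} \to SL_{2n} \to SL_{2n}/Sp_{2n}$ is an $\aone$-fiber sequence. The long exact sequence of $\aone$-homotopy sheaves, together with the fact that $SL_{2n}$ and $Sp_{2n}$ are $\aone$-connected (being generated by elementary and symplectic elementary matrices, which are $\aone$-homotopic to the identity), forces $\bpi_0^{\aone}(SL_{2n}/Sp_{2n}) = \ast$. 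Combined with $\bpi_0^{\aone}(\gm{}) = \gm{}$ and the product decomposition above, this gives $\bpi_0^{\aone}(GL_{2n}/Sp_{2n}) = \gm{}$, with $SL_{2n}/Sp_{2n}$ the connected component of the identity. No step here is genuinely hard; the only mildly subtle point is recording that the section strengthens a priori homotopy-theoretic splitting into an honest trivialization of the underlying scheme, so that one does not need to argue abstractly about $\aone$-fiber sequences of quotient stacks.
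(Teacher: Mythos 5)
Your proof is correct and, for the fiber sequence part, takes a genuinely different and more elementary route than the paper.  The paper establishes the $\aone$-fiber sequence by writing $GL_{2n}/Sp_{2n}=GL_{2n}\times^{SL_{2n}} SL_{2n}/Sp_{2n}$ and invoking \cite[Proposition 5.1]{Wendt}, a nontrivial theorem guaranteeing that such associated bundles give $\aone$-fiber sequences.  You instead observe that $(M,t)\mapsto \operatorname{diag}(t,1,\ldots,1)M$ is a scheme isomorphism $SL_{2n}\times\gm{}\isomto GL_{2n}$ that intertwines the right $Sp_{2n}$-actions, so that $GL_{2n}/Sp_{2n}\cong(SL_{2n}/Sp_{2n})\times\gm{}$ as schemes with the determinant becoming the second projection; a split product projection is a fiber sequence for trivial reasons, without appeal to any theorem about quotients of algebraic groups.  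This is a clean simplification, and your closing remark correctly identifies the point being gained.  One slip: the isomorphism $SL_{2n}\times\gm{}\isomto GL_{2n}$ is \emph{not} $SL_{2n}$-equivariant for left multiplication on the first factor (left multiplication by $h$ produces $\operatorname{diag}(t,1,\ldots,1)hM$, not $h\operatorname{diag}(t,1,\ldots,1)M$), but you never use this, and the $Sp_{2n}$-equivariance for right translation that you actually invoke is correct.

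For the $\aone$-connectedness of $SL_{2n}/Sp_{2n}$, your argument works but is heavier than necessary.  You appeal to the $\aone$-fiber sequence $Sp_{2n}\to SL_{2n}\to SL_{2n}/Sp_{2n}$, which does hold, but you attribute it merely to Zariski-local triviality of $Sp_{2n}$-torsors; in practice that fiber sequence requires a theorem of Morel/Wendt (the same sort of input you just avoided for the other sequence), not simply the specialness of $Sp_{2n}$.  The paper's route is shorter and avoids this: since $SL_{2n}\to SL_{2n}/Sp_{2n}$ is an epimorphism of Nisnevich sheaves, $Sing_*^{\aone}(SL_{2n})(L)\to Sing_*^{\aone}(SL_{2n}/Sp_{2n})(L)$ is surjective on $\pi_0$ for every finitely generated extension $L/k$; connectivity of the former (elementary matrices) then gives connectivity of the latter, and one concludes by \cite[Lemma 6.1.3]{MStable}.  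If you keep your fiber-sequence argument, cite the relevant theorem rather than treating it as a formal consequence of local triviality.
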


\begin{proof}
Since $GL_{2n}/Sp_{2n}=GL_{2n}\times^{SL_{2n}} SL_{2n}/Sp_{2n}$ the sequence
\[
SL_{2n}/Sp_{2n} \longrightarrow GL_{2n}/Sp_{2n} {\longrightarrow} \gm{}
\]
is a $\aone$-fibre sequence by \cite[Proposition 5.1]{Wendt}.

Next, to show that $SL_{2n}/Sp_{2n}$ is $\aone$-connected, it suffices by \cite[Lemma 6.1.3]{MStable} to show that for every finitely generated extension $L/k$ the simplicial set $Sing_*^{\aone}(SL_{2n}/Sp_{2n})(L)$ is connected.  Since the morphism $SL_{2n} \to SL_{2n}/Sp_{2n}$ is an epimorphism of Nisnevich sheaves, the connectedness of the simplicial set $Sing_*^{\aone}(SL_{2n}/Sp_{2n})(L)$ follows from the connectedness of the simpicial set $Sing_*^{\aone}(SL_{2n})(L)$, which is a consequence of the fact that any element of $SL_{2n}(L)$ can be written as a product of elementary matrices.

Since the morphism
\[
GL_{2n}/Sp_{2n} {\longrightarrow} \gm{}
\]
splits, it follows that $\bpi_0^{\aone}(GL_{2n}/Sp_{2n})=\bpi_0^{\aone}(\gm{})= \gm{}$, and this completes the verification of the lemma.
\end{proof}

\begin{notation}
Set $X_n:=SL_{2n}/Sp_{2n}$ and $X_{\infty}=SL/Sp$.
\end{notation}

\begin{cor}
\label{cor:glspconnectedcomponents}
There is a canonical isomorphism $\bpi_0^{\aone}(GL/Sp) \cong \gm{}$, and the induced morphism $X_\infty=SL/Sp \to GL/Sp$ is the inclusion of the $\aone$-connected component of the base-point.  In particular, there are canonical isomorphisms $\bpi_i^{\aone}(SL/Sp) \isomt \mathbf{GW}^3_{i+1}$ for any integer $i > 0$.
\end{cor}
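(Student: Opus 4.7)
The plan is to pass to the filtered colimit over $n$ in the split $\aone$-fibre sequences
\[
SL_{2n}/Sp_{2n} \longrightarrow GL_{2n}/Sp_{2n} \longrightarrow \gm{}
\]
provided by Lemma \ref{lem:pfaffianone}. The stabilization inclusions $SL_{2n} \hookrightarrow SL_{2n+2}$ and $Sp_{2n} \hookrightarrow Sp_{2n+2}$ are compatible with the determinant map to $\gm{}$ and with the common splitting $t \mapsto \mathrm{diag}(t,1,\ldots,1)$, so these sequences assemble into a directed system of split $\aone$-fibre sequences with constant base $\gm{}$. In the motivic model structure on simplicial presheaves recalled in Subsection \ref{ss:preliminaries}, filtered colimits preserve motivic weak equivalences and motivic fibrations with motivically fibrant codomains (see \cite[Corollary 2.16]{DundasOestvaerRoendigs}), so formation of $\aone$-homotopy fibres commutes with filtered colimits. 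Taking the colimit thus produces a split $\aone$-fibre sequence
\[
SL/Sp \longrightarrow GL/Sp \longrightarrow \gm{},
\]
whose first map is the morphism $X_\infty \to GL/Sp$ of the statement.

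To finish, I would read off the $\aone$-homotopy sheaves from the long exact sequence. Since $\bpi_0^{\aone}$ commutes with filtered colimits of sheaves and each $SL_{2n}/Sp_{2n}$ is $\aone$-connected by Lemma \ref{lem:pfaffianone}, the space $SL/Sp$ is $\aone$-connected. Moreover, the sheaf $\gm{}$ is strictly $\aone$-invariant, hence $\aone$-local as a constant simplicial sheaf, so $\bpi_0^{\aone}(\gm{}) = \gm{}$ and $\bpi_i^{\aone}(\gm{}) = 0$ for all $i \geq 1$ (alternatively, using $\gm{} \simeq \Omega_s \mathbb{P}^{\infty}$ and the identification of $\mathbb{P}^{\infty}$ with $K(\gm{},1)$). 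The long exact sequence of $\aone$-homotopy sheaves then collapses to give $\bpi_0^{\aone}(GL/Sp) \isomto \gm{}$ (surjectivity being supplied by the splitting) and $\bpi_i^{\aone}(SL/Sp) \isomto \bpi_i^{\aone}(GL/Sp)$ for every $i \geq 1$. Combining the latter with Proposition \ref{prop:geometricGW3} yields the desired isomorphisms $\bpi_i^{\aone}(SL/Sp) \cong \mathbf{GW}^3_{i+1}$ for $i > 0$. Finally, because $SL/Sp$ is the $\aone$-homotopy fibre over the basepoint of the canonical map $GL/Sp \to \bpi_0^{\aone}(GL/Sp) = \gm{}$, it is by definition the inclusion of the $\aone$-connected component of the basepoint.

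There is no genuine obstacle here: once the split $\aone$-fibre sequence of Lemma \ref{lem:pfaffianone} and the geometric identification $\bpi_i^{\aone}(GL/Sp) = \mathbf{GW}^3_{i+1}$ of Proposition \ref{prop:geometricGW3} are in hand, the argument is entirely formal. The only care required is in ensuring that $\aone$-homotopy fibres and the connected component sheaf $\bpi_0^{\aone}$ both commute with the filtered colimit defining $GL/Sp$ and $SL/Sp$, which is precisely the content of the model-theoretic compatibility recalled from \cite[Corollary 2.16]{DundasOestvaerRoendigs}.
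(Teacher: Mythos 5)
Your argument is correct and is essentially the approach the paper itself takes: pass to the colimit in the split $\aone$-fibre sequences of Lemma \ref{lem:pfaffianone} (using the compatibility of filtered colimits with the motivic model structure recalled from \cite[Corollary~2.16]{DundasOestvaerRoendigs}), observe that $\gm{}$ is $\aone$-discrete and that $SL/Sp$ is $\aone$-connected, and read off the identifications from the long exact sequence together with Proposition~\ref{prop:geometricGW3}. The paper states this more tersely, but the underlying argument is the same.
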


\begin{proof}
Using Lemma \ref{lem:pfaffianone} and passing to the colimit with respect to $n$ one obtains an $\aone$-fiber sequence of the form
\[
SL/Sp \longrightarrow GL/Sp \longrightarrow \gm{}.
\]
Since $\gm{}$ is $\aone$-discrete, i.e., $\bpi_0^{\aone}(\gm{}) \cong \gm{}$ and higher $\aone$-homotopy sheaves of $\gm{}$ for any choice of base-point are trivial, and since $SL/Sp$ is $\aone$-connected, the result follows.
\end{proof}


The practical consequence of the above statements is that the $\aone$-homotopy theory of the map $GL_{2n}/Sp_{2n} \to GL/Sp$ is reduced to studying the map $X_n \to X_\infty$.

\subsection{Stabilization fiber sequences involving $X_n$}
\label{ss:stabilizationsequences}
Now, we study the map $X_n \to X_{n+1}$.  Following the development of ideas from classical topology, we will identify $X_n$ as the $\aone$-homotopy fiber of a morphism $X_{n+1} \to {\mathbb A}^{2n+1} \setminus 0$.  To accomplish this we first provide an alternative geometric model for $X_{n+1}$.  Begin by considering the commutative diagram
\begin{equation}
\label{eqn:spslfibersquare}
\xymatrix{
Sp_{2n} \ar[r]\ar[d] & Sp_{2n+2} \ar[d]\\
SL_{2n+1} \ar[r] & SL_{2n+2}
}
\end{equation}
where the top horizontal map is defined (functorially) by sending a symplectic matrix $M$ to the block-diagonal matrix $diag(M,Id_2)$, the bottom horizontal map sends a matrix $M$ of determinant $1$ to the block-diagonal matrix $diag(M,1)$, the left-hand map is determined by the fact that the composite $Sp_{2n}\to Sp_{2n+2}\to SL_{2n+2}$ factors through $SL_{2n+1}$ and the right-hand vertical map is the standard inclusion.  It is straightforward to check that this square is cartesian.

Since $Sp_{2n+2}$ acts transitively on $SL_{2n+2}/SL_{2n+1}$, and the stabilizer of the identity coset is $Sp_{2n}$, we conclude that there exists an isomorphism of schemes $SL_{2n+2}/SL_{2n+1} \cong Sp_{2n+2}/Sp_{2n}$. Analogously, we can deduce the following result, which provides our alternative model for $X_{n+1}$.

\begin{lem}
\label{lem:xmisomorphism}
For any integer $n \geq 1$, the map $SL_{2n+1}\to SL_{2n+2}$ induces an isomorphism of schemes $SL_{2n+1}/Sp_{2n} \cong X_{n+1}$.
\end{lem}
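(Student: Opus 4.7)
The plan is to mimic the argument given in the paragraph immediately preceding the lemma: there one identifies $SL_{2n+2}/SL_{2n+1}$ with $Sp_{2n+2}/Sp_{2n}$ by showing that $Sp_{2n+2}$ acts transitively on $SL_{2n+2}/SL_{2n+1}$ with stabilizer $Sp_{2n}$. By a symmetric argument, I would show that $SL_{2n+1}$ acts transitively on $SL_{2n+2}/Sp_{2n+2}$ with stabilizer $Sp_{2n}$, which yields the desired isomorphism.

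First, from the cartesian square (\ref{eqn:spslfibersquare}) I would read off the scheme-theoretic equality $Sp_{2n} = SL_{2n+1} \cap Sp_{2n+2}$ inside $SL_{2n+2}$. Since $Sp_{2n} \subset Sp_{2n+2}$, the composite $SL_{2n+1} \hookrightarrow SL_{2n+2} \to SL_{2n+2}/Sp_{2n+2}$ is invariant under the $Sp_{2n}$-action on $SL_{2n+1}$ by right translation, so it factors through a morphism
\[
\bar\phi \colon SL_{2n+1}/Sp_{2n} \longrightarrow SL_{2n+2}/Sp_{2n+2}.
\]
The identification of the scheme-theoretic stabilizer of the identity coset with $Sp_{2n}$, coming directly from the cartesian square, then shows that $\bar\phi$ is a monomorphism.

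Second, I would establish transitivity. The transitivity of $Sp_{2n+2}$ on $SL_{2n+2}/SL_{2n+1}$ invoked in the preceding paragraph amounts precisely to the surjectivity of the multiplication map $Sp_{2n+2} \times SL_{2n+1} \to SL_{2n+2}$. Applying the inversion automorphism $g \mapsto g^{-1}$ of $SL_{2n+2}$, which stabilizes each of the closed subgroups $Sp_{2n+2}$ and $SL_{2n+1}$ and reverses order of multiplication, yields the surjectivity of $SL_{2n+1} \times Sp_{2n+2} \to SL_{2n+2}$; equivalently, $SL_{2n+1}$ acts transitively on $SL_{2n+2}/Sp_{2n+2}$. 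Consequently, $\bar\phi$ is surjective.

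Combining these two observations, $\bar\phi$ is a surjective monomorphism between smooth $k$-schemes of the same dimension $2n^{2}+3n$. Since $Sp_{2n}$ and $Sp_{2n+2}$ are special, the sheaf-theoretic and geometric quotients coincide (cf.\ the remark in Subsection \ref{ss:geometricrepresentability}), and the orbit map $SL_{2n+1} \to SL_{2n+2}/Sp_{2n+2}$ is a Zariski-locally trivial $Sp_{2n}$-torsor onto its image. Standard theory of homogeneous spaces for smooth group schemes then forces $\bar\phi$ to be an isomorphism. The only real obstacle is to verify the stabilizer and transitivity statements scheme-theoretically (rather than merely on $\bar k$-points); the first is immediate from the cartesian square, and the second reduces, by the inversion trick, to the scheme-theoretic transitivity already used in the preceding paragraph.
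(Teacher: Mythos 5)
Your argument is the same one the paper intends by the word ``analogously'': show that $SL_{2n+1}$ acts transitively on $SL_{2n+2}/Sp_{2n+2}$ with scheme-theoretic stabilizer $Sp_{2n}$ coming from the cartesian square~(\ref{eqn:spslfibersquare}), and invoke the standard identification of orbit and homogeneous space. The one genuine addition is the inversion trick, which deduces transitivity of $SL_{2n+1}$ on $SL_{2n+2}/Sp_{2n+2}$ from transitivity of $Sp_{2n+2}$ on $SL_{2n+2}/SL_{2n+1}$ rather than reproving it by extending a nondegenerate pair to a symplectic basis; that is a clean observation, though the final step as you phrase it (``the orbit map is a $Sp_{2n}$-torsor onto its image'') is essentially a restatement of the conclusion and should be attributed directly to the transitivity-plus-smooth-stabilizer criterion rather than presented as an independent intermediate fact.
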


The following result provides a description of the connectivity of the $\aone$-homotopy fiber of the stabilization map $X_n \to X_{n+1}$, together with some complements.

\begin{prop}
\label{prop:connectivityofSL2nSp2ntoSLSp}
For any $n\geq 1$, there is an $\aone$-fiber sequence of the form
\[
X_n \longrightarrow X_{n+1} \longrightarrow SL_{2n+1}/SL_{2n}.
\]
Therefore, the map $X_n \to X_{n+1}$ is $(2n-2)$-$\aone$-connected, and the map $X_n \to SL/Sp$ is $(2n-2)$-$\aone$-connected.  Consequently, there are isomorphisms $\bpi_i^{\aone}(X_n) \isomt \mathbf{GW}^3_{i+1}$ for $i \leq 2n-2$ and there is an exact sequence of the form
\[
\bpi_{2n}^{\aone}(SL/Sp) \longrightarrow \K^{MW}_{2n+1} \longrightarrow \bpi_{2n-1}^{\aone}(X_n) \longrightarrow \bpi_{2n-1}^{\aone}(SL/Sp) \longrightarrow 0.
\]
\end{prop}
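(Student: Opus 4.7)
The plan is to reduce everything to analysis of a well-chosen fiber sequence together with Morel's computation of the low $\aone$-homotopy sheaves of $\A^{m}\setminus 0$. To construct the fiber sequence, I would start from the identification $X_{n+1}\cong SL_{2n+1}/Sp_{2n}$ furnished by Lemma \ref{lem:xmisomorphism}. The chain of subgroups $Sp_{2n}\subset SL_{2n}\subset SL_{2n+1}$ then yields a canonical projection $SL_{2n+1}/Sp_{2n}\to SL_{2n+1}/SL_{2n}$ whose scheme-theoretic fiber over the identity coset is $SL_{2n}/Sp_{2n}=X_n$; equivalently, one recognizes $X_{n+1}$ as the associated bundle $SL_{2n+1}\times^{SL_{2n}}X_n\to SL_{2n+1}/SL_{2n}$ for the Zariski locally trivial $SL_{2n}$-torsor $SL_{2n+1}\to SL_{2n+1}/SL_{2n}$ ($SL_{2n}$ being special). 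Then \cite[Proposition 5.1]{Wendt}, used in the same way as in Subsection \ref{ss:geometricrepresentability}, produces the $\aone$-fiber sequence $X_n\to X_{n+1}\to SL_{2n+1}/SL_{2n}$.

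Next, I would identify the base. The map $SL_{2n+1}\to \A^{2n+1}\setminus 0$ sending a matrix to its last column is the quotient by the stabilizer of $e_{2n+1}$, a semidirect product $SL_{2n}\ltimes \ga^{2n}$. Consequently $SL_{2n+1}/SL_{2n}\to \A^{2n+1}\setminus 0$ is a Zariski locally trivial $\ga^{2n}$-bundle, hence an $\aone$-weak equivalence by $\aone$-contractibility of $\ga^{2n}$. By Morel's theorem, $\A^{2n+1}\setminus 0$ is $(2n-1)$-$\aone$-connected with $\bpi_{2n}^{\aone}(\A^{2n+1}\setminus 0)\cong \K^{MW}_{2n+1}$, so the long exact sequence in $\aone$-homotopy sheaves of the fibration immediately yields that $X_n\to X_{n+1}$ is an isomorphism on $\bpi_i^{\aone}$ for $i\leq 2n-2$ and surjective on $\bpi_{2n-1}^{\aone}$, that is, $(2n-2)$-$\aone$-connected.

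To pass from $X_{n+1}$ to $SL/Sp$, I would iterate. The same argument applied with $n$ replaced by $n+k$ shows that each stabilization map $X_{n+k}\to X_{n+k+1}$ is $(2(n+k)-2)$-$\aone$-connected, so the connectivity of the stabilization maps grows without bound. Using \cite[Corollary 2.16]{DundasOestvaerRoendigs} to commute formation of homotopy fibers past filtered colimits, the telescope map $X_n\to SL/Sp$ is then itself $(2n-2)$-$\aone$-connected, and combining this with Corollary \ref{cor:glspconnectedcomponents} provides the identification $\bpi_i^{\aone}(X_n)\cong \mathbf{GW}^3_{i+1}$ for $i\leq 2n-2$.

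For the final four-term exact sequence, I would extract from the long exact sequence of $X_n\to X_{n+1}\to \A^{2n+1}\setminus 0$ the piece
\[
\bpi_{2n}^{\aone}(X_{n+1})\to \K^{MW}_{2n+1}\to \bpi_{2n-1}^{\aone}(X_n)\to \bpi_{2n-1}^{\aone}(X_{n+1})\to 0,
\]
and then run the iteration argument once more, with $n+1$ in place of $n$, to deduce that $X_{n+1}\to SL/Sp$ is $2n$-$\aone$-connected, so $\bpi_i^{\aone}(X_{n+1})\cong \bpi_i^{\aone}(SL/Sp)$ for $i=2n-1,2n$; substituting yields the claimed sequence. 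The only real subtlety I anticipate is this last point: the left end of the exact sequence requires an \emph{isomorphism} (not merely a surjection) $\bpi_{2n}^{\aone}(X_{n+1})\cong\bpi_{2n}^{\aone}(SL/Sp)$, and this is exactly what is supplied by the fact that the stabilization maps past $X_{n+1}$ are already $2n$-$\aone$-connected rather than just $(2n-2)$-$\aone$-connected — a one-step-shifted bookkeeping that is the principal thing to get right.
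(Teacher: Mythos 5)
Your argument is correct and follows essentially the same route as the paper: form the $\aone$-fiber sequence from $X_{n+1}\cong SL_{2n+1}/Sp_{2n}$ over $SL_{2n+1}/SL_{2n}$ via Wendt's locally trivial fibration result, identify the base with $\A^{2n+1}\setminus 0$ up to $\aone$-weak equivalence via projection onto the last column, invoke Morel's connectivity and the computation of $\bpi_{2n}^{\aone}(\A^{2n+1}\setminus 0)$, and iterate to compare with $SL/Sp$. Your explicit care with the one-step shift for the four-term exact sequence is exactly the point the paper handles by ``the third statement follows from the second by induction on $n$''; the only minor discrepancy is the Wendt citation (the paper uses \cite[Proposition 5.2]{Wendt} here rather than Proposition 5.1).
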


\begin{proof}
For the first statement, observe that we have a fiber sequence
\[
X_{n}=SL_{2n}/Sp_{2n} \longrightarrow SL_{2n+1}/Sp_{2n} \longrightarrow SL_{2n+1}/SL_{2n}
\]
by \cite[Proposition 5.2]{Wendt}. Now, Lemma \ref{lem:xmisomorphism} yields an isomorphism $X_{n+1}\cong SL_{2n+1}/Sp_{2n}$.  For the second statement, observe first that projecting a matrix to its last column defines a map $SL_{2n+1}/SL_{2n}\to \A^{2n+1}\setminus 0$ whose fibers are affine spaces. This map is thus an $\aone$-weak equivalence. Now $\A^{2n+1}\setminus 0$ is $(2n-1)$-$\aone$-connected by \cite[Theorem 6.38 and Remark 6.42]{MField}.  The third statement follows from the second by induction on $n$, and the fourth statement follows from the previous three together with Corollary \ref{cor:glspconnectedcomponents} by looking at the long exact sequence in homotopy sheaves attached to the stated $\aone$-fiber sequence and using the fact that $\bpi_{2n}^{\aone}(\A^{2n+1}\setminus 0) \cong \K^{MW}_{2n+1}$, i.e., \cite[Theorem 6.40]{MField}.
\end{proof}

\subsection{The first non-stable sheaf}
\label{ss:firstnonstable}
The goal of this section is to identify the cokernel of the morphism $\bpi_{2n}^{\aone}(SL/Sp) \longrightarrow \K^{MW}_{2n+1}$ appearing in Proposition \ref{prop:connectivityofSL2nSp2ntoSLSp}.  To perform this computation, we need to recall some notation and results from \cite[\S 3]{AsokFaselSpheres}; there we introduced strictly $\aone$-invariant sheaves $\mathbf{S}_m$ for any $m\geq 2$ and $\mathbf{T}_m$ for $m$ odd and $\geq 3$.

By definition, if $L/k$ is any finitely generated field extension, then ${\mathbf S}_m(L)$ is the cokernel of a homomorphism $\K_m^Q(L)\to \K_m^M(L)$ introduced by Suslin in \cite[\S 4]{Suslin82b}. There are surjective morphisms $\K_m^M/(m-1)!\to \mathbf{S}_m$ and, provided $m$ is odd, $\mathbf{S}_m\to \K_m^M/2$. The sheaf $\mathbf{T}_m$ for $m$ odd is then defined as the fiber product
\[
\xymatrix{
{\mathbf T}_m \ar[r]\ar[d] & \mathbf{I}^m \ar[d] \\
{\mathbf S}_m \ar[r] & \K^M_m/2,
}
\]
where $\mathbf{I}^m$ is the unramified sheaf corresponding to the $m$-th power of the fundamental ideal in the Witt ring.  See Subsection \ref{ss:strictlyaoneinvariant} (in particular the examples) for more details regarding strictly $\aone$-invariant sheaves.

\begin{thm}
\label{thm:main}
For $n\geq 1$, the canonical morphism $GL_{2n}/Sp_{2n} \to GL/Sp$ is $(2n-2)$-$\aone$-connected, and there is a short exact sequence of the form
\[
0 \longrightarrow {\mathbf F}_{2n+1} \longrightarrow \bpi_{2n-1}^{\aone}(GL_{2n}/Sp_{2n}) \longrightarrow \mathbf{GW}^3_{2n} \longrightarrow 0,
\]
where ${\mathbf F}_{2n+1}$ is a quotient of ${\mathbf T}_{2n+1}$.
\end{thm}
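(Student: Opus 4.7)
The connectivity claim that $GL_{2n}/Sp_{2n} \to GL/Sp$ is $(2n-2)$-$\aone$-connected is immediate from Lemma \ref{lem:pfaffianone} and Corollary \ref{cor:glspconnectedcomponents}, which identify $X_n = SL_{2n}/Sp_{2n}$ and $X_\infty = SL/Sp$ as the $\aone$-connected components of the basepoint in $GL_{2n}/Sp_{2n}$ and $GL/Sp$ respectively, combined with the $(2n-2)$-$\aone$-connectivity of $X_n \to X_\infty$ established in Proposition \ref{prop:connectivityofSL2nSp2ntoSLSp}.

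For the short exact sequence, I would apply the long exact sequence of $\aone$-homotopy sheaves to the $\aone$-fiber sequence $X_n \to X_{n+1} \to {\mathbb A}^{2n+1} \setminus 0$ from Proposition \ref{prop:connectivityofSL2nSp2ntoSLSp}. Since the indices $2n-1$ and $2n$ both lie in the stable range for $X_{n+1}$, one has $\bpi_{2n-1}^{\aone}(X_{n+1}) \cong \mathbf{GW}^3_{2n}$ and $\bpi_{2n}^{\aone}(X_{n+1}) \cong \mathbf{GW}^3_{2n+1}$, while Morel's theorem yields $\bpi_{2n}^{\aone}({\mathbb A}^{2n+1} \setminus 0) \cong \K^{MW}_{2n+1}$ (and vanishing of the next homotopy sheaf). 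Combined with the identification $\bpi_{2n-1}^{\aone}(GL_{2n}/Sp_{2n}) = \bpi_{2n-1}^{\aone}(X_n)$ from Lemma \ref{lem:pfaffianone}, the long exact sequence becomes
\[
\mathbf{GW}^3_{2n+1} \stackrel{\partial}{\longrightarrow} \K^{MW}_{2n+1} \longrightarrow \bpi_{2n-1}^{\aone}(GL_{2n}/Sp_{2n}) \longrightarrow \mathbf{GW}^3_{2n} \longrightarrow 0,
\]
and defining ${\mathbf F}_{2n+1} := \coker(\partial)$ produces the required short exact sequence.

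The remaining and principal task is to exhibit ${\mathbf F}_{2n+1}$ as a quotient of ${\mathbf T}_{2n+1}$. Recalling that ${\mathbf T}_{2n+1} = {\mathbf S}_{2n+1} \times_{\K^M_{2n+1}/2} {\mathbf I}^{2n+1}$ and Morel's analogous fiber-product description $\K^{MW}_{2n+1} = \K^M_{2n+1} \times_{\K^M_{2n+1}/2} {\mathbf I}^{2n+1}$, Suslin's surjection $\K^M_{2n+1} \twoheadrightarrow {\mathbf S}_{2n+1}$ induces a surjection $\pi \colon \K^{MW}_{2n+1} \twoheadrightarrow {\mathbf T}_{2n+1}$ whose kernel consists precisely of elements with trivial ${\mathbf I}^{2n+1}$-component and with $\K^M_{2n+1}$-component in the image of Suslin's map $\K^Q_{2n+1} \to \K^M_{2n+1}$. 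The desired factorization $\K^{MW}_{2n+1} \twoheadrightarrow {\mathbf T}_{2n+1} \twoheadrightarrow {\mathbf F}_{2n+1}$ exists if and only if $\ker(\pi) \subseteq \operatorname{im}(\partial)$, and it is this inclusion that must be verified.

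The main obstacle is the explicit description of $\operatorname{im}(\partial)$. By strict $\aone$-invariance it suffices to check the inclusion on sections over finitely generated field extensions $L/k$. My approach would be to precompose $\partial$ with the Karoubi hyperbolic map $H \colon \K^Q_{2n+1} \to \mathbf{GW}^3_{2n+1}$ and prove the compatibility $\partial \circ H = (\mathrm{Suslin}, 0)$ in $\K^{MW}_{2n+1} = \K^M_{2n+1} \times_{\K^M_{2n+1}/2} {\mathbf I}^{2n+1}$, which would immediately produce every element of $\ker(\pi)$ as a boundary. Establishing this identity cleanly is the hard technical step: it amounts to comparing the fiber sequence of Proposition \ref{prop:connectivityofSL2nSp2ntoSLSp} with the analogous fiber sequence $SL_{2n} \to SL_{2n+1} \to {\mathbb A}^{2n+1}\setminus 0$ via the map induced by the inclusion $Sp_{2n} \hookrightarrow SL_{2n}$, and identifying the boundary map of the latter with Suslin's map on the $\K^M$-factor while verifying that the symplectic origin of $H$-classes forces vanishing of the ${\mathbf I}^{2n+1}$-factor.
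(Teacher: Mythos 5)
Your overall strategy and the exact sequence are the same as the paper's, and the first two parts of your argument (connectivity, extraction of the four-term sequence) are correct and match.

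The genuine gap is exactly where you flag it: the identity $\partial \circ H = \psi'_{2n+1}$ (your ``$(\mathrm{Suslin},0)$'') is asserted but not proved, and the step you describe as ``hard and technical'' --- separately verifying the vanishing of the $\mathbf{I}^{2n+1}$-factor from ``the symplectic origin of $H$-classes'' --- is not actually how the paper closes this, nor is it needed. The paper's trick is to factor everything through the surjection $\bpi^{\aone}_{2n}(SL_{2n+1})\twoheadrightarrow\K^Q_{2n+1}$. One has the evident commutative triangle
\[
\bpi^{\aone}_{2n}(SL_{2n+1}) \longrightarrow \bpi^{\aone}_{2n}(SL_{2n+1}/Sp_{2n}) \longrightarrow \bpi^{\aone}_{2n}(SL_{2n+1}/SL_{2n}),
\]
induced by the chain of quotient maps $SL_{2n+1}\to SL_{2n+1}/Sp_{2n}\to SL_{2n+1}/SL_{2n}$. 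Two citations then identify the two sides after passing to the quotient $\K^Q_{2n+1}$: the composite along the bottom is $\psi'_{2n+1}$ by Lemma 3.13 of \cite{AsokFaselSpheres} (already known there to have image in $2\K^M_{2n+1}$ and cokernel ${\mathbf T}_{2n+1}$), and the first map becomes $H_{3,2n+1}$ by Theorem 8.4 of \cite{SchlichtingTripathi}. Since the vertical surjection is an epimorphism, commutativity descends and gives $\chi_{2n+1}\circ H_{3,2n+1}=\psi'_{2n+1}$ directly. In particular the vanishing of the $\mathbf{I}^{2n+1}$-factor does not need to be argued from ``symplectic origin''; it is inherited automatically from the known factorization of $\psi'_{2n+1}$ through $2\K^M_{2n+1}\subset\K^{MW}_{2n+1}$.

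One minor imprecision: your phrase ``via the map induced by the inclusion $Sp_{2n}\hookrightarrow SL_{2n}$'' is slightly misleading; the comparison of the two $\aone$-fiber sequences $SL_{2n}\to SL_{2n+1}\to \A^{2n+1}\setminus 0$ and $SL_{2n}/Sp_{2n}\to SL_{2n+1}/Sp_{2n}\to\A^{2n+1}\setminus 0$ goes through the quotient morphisms $SL_m\to SL_m/Sp_{2n}$ (not an inclusion), with the identity on the base. This is essentially the same triangle the paper works with, just drawn as a map of fiber sequences.
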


\begin{proof}
Recall first that $X_n$ can be identified as the $\aone$-connected component of the base point in $GL_{2n}/Sp_{2n}$. It follows that $\bpi_{2n-1}^{\aone}(X_n)=\bpi_{2n-1}^{\aone}(GL_{2n}/Sp_{2n})$.  By means of Proposition \ref{prop:geometricGW3}, the exact sequence in Proposition \ref{prop:connectivityofSL2nSp2ntoSLSp} takes the form
\[
\mathbf{GW}^3_{2n+1} \stackrel{\chi_{2n+1}}{\longrightarrow} \K^{MW}_{2n+1} \longrightarrow \bpi_{2n-1}^{\aone}(GL_{2n}/Sp_{2n}) \longrightarrow \mathbf{GW}^3_{2n} \longrightarrow 0,
\]
and we set
\[
{\mathbf F}_{2n+1} := \coker(\mathbf{GW}^3_{2n+1} \stackrel{\chi_{2n+1}}{\longrightarrow} \K^{MW}_{2n+1}).
\]
It remains to describe ${\mathbf F}_{2n+1}$ (here ${\mathbf F}$ stands for ``forgetful") as a quotient of ${\mathbf T}_{2n+1}$.

To this end, observe that the morphism $\chi_{2n+1}:\mathbf{GW}^3_{2n+1} \to \K^{MW}_{2n+1}$ is induced by the morphism $X_{n+1}\cong SL_{2n+1}/Sp_{2n}\to SL_{2n+1}/SL_{2n}$ by applying $\bpi^{\aone}_{2n}$.
There is a commutative triangle of the form:
\begin{equation}\label{eqn:triangle}
\xymatrix{\bpi^{\aone}_{2n}(SL_{2n+1})\ar[r]\ar[rd] & \bpi^{\aone}_{2n}(SL_{2n+1}/Sp_{2n})\ar[d] \\
 & \bpi^{\aone}_{2n}(SL_{2n+1}/SL_{2n}).}
\end{equation}
By \cite[Lemma 3.13]{AsokFaselSpheres}, the morphism $\bpi^{\aone}_{2n}(SL_{2n+1})\to \bpi^{\aone}_{2n}(SL_{2n+1}/SL_{2n})$ admits the following factorization
\[
\bpi^{\aone}_{2n}(SL_{2n+1})\longtwoheadrightarrow\K^Q_{2n+1} \longrightarrow 2\K^M_{2n+1} \subset \K^{MW}_{2n+1}
\]
where the first arrow is the stabilization morphism $\bpi^{\aone}_{2n}(SL_{2n+1})\to \bpi^{\aone}_{2n}(SL)$. Moreover, \cite[Lemma 3.10]{AsokFaselSpheres} shows that $\mathrm{Im}(\psi^\prime_{2n+1}:\K^Q_{2n+1}\to \K^{MW}_{2n+1})$ contains $(2n)! \K^M_{2n+1}$. On the other hand, the stabilization morphism
\[
\bpi^{\aone}_{2n}(SL_{2n+1}/Sp_{2n})\cong \bpi^{\aone}_{2n}(SL_{2n+2}/Sp_{2n+2}) \longrightarrow \bpi^{\aone}_{2n}(SL/Sp)
\]
is an isomorphism by Proposition \ref{prop:connectivityofSL2nSp2ntoSLSp}.  Now, the commutative square of spaces
\[
\xymatrix{SL_{2n+1}\ar[r]\ar[d] & SL_{2n+1}/Sp_{2n}\ar[d] \\
SL\ar[r] & SL/Sp}
\]
yields a commutative square of sheaves of the form:
\[
\xymatrix{\bpi^{\aone}_{2n}(SL_{2n+1})\ar[r]\ar@{->>}[d]  & \bpi^{\aone}_{2n}(SL_{2n+1}/Sp_{2n})\ar[d]^-{\cong} \\
\K_{2n+1}^Q\ar[r] & \mathbf{GW}_{2n+1}^3},
\]
where the bottom arrow is $H_{3,2n+1}$ by \cite[Theorem 8.4]{SchlichtingTripathi}. Since the left-hand vertical map is surjective, it follows that the commutative triangle (\ref{eqn:triangle}) induces a commutative triangle
\begin{equation}\label{eqn:triangle2}
\xymatrix@C=4em{\K_{2n+1}^Q\ar[r]^-{H_{3,2n+1}}\ar[rd]_-{\psi^\prime_{2n+1}} & \mathbf{GW}_{2n+1}^3\ar[d]^-{\chi_{2n+1}} \\
 & \K_{2n+1}^{MW}.}
\end{equation}
Now ${\mathbf T}_{2n+1} = \coker(\psi_{2n+1}^\prime)$ by  \cite[p. 18]{AsokFaselSpheres}, and we obtain an epimorphism ${\mathbf T}_{2n+1} \to {\mathbf F}_{2n+1}$ as claimed.
\end{proof}

\begin{rem}
Given an oriented vector bundle $V$ of rank $2n$ on a smooth affine variety $Y$, the obstruction for $V$ to carry a non-degenerate symplectic form can be analyzed using the above results. Indeed, $V$ gives rise to a map $Y\to BSL_{2n}$ and constructing a non-degenerate symplectic form on $V$ is equivalent to lifting this map to a map $Y\to BSp_{2n}$. The obstruction to the existence of such a lifting is governed by the homotopy fiber of $BSp_{2n}\to BSL_{2n}$ which is precisely $X_n$. The above computation of $\bpi^{\aone}_i(X_n)$ provides sufficient information to apply the machinery of obstruction theory, as described in Section \ref{section:splittingproblem}, to study this problem when $d\leq 2n$.
\end{rem}

\subsection{Equivariance with respect to $\gm{}$-actions}
\label{ss:actionsonpi3a3minus0}
With the exception of $SL_{2n+1}$, all the spaces appearing in Diagram (\ref{eqn:spslfibersquare}) are endowed with the action of $\gm{}$ defined in Subsection \ref{ss:actionsongwsheaves}. It is easy to check that the conjugation by $\gamma_{2n+2,\lambda}^{-1}$ on $SL_{2n+2}$ preserves the subgroup $SL_{2n+1}$.  With respect to the induced action on $SL_{2n+1}$, all the maps in Diagram (\ref{eqn:spslfibersquare}) are $\gm{}$-equivariant, and the isomorphism $SL_{2n+2}/Sp_{2n+2} \cong SL_{2n+1}/Sp_{2n}$ of Lemma \ref{lem:xmisomorphism} is $\gm{}$-equivariant as well.

With respect to the $\gm{}$-actions just mentioned, it follows that the $\aone$-fiber sequence of Proposition \ref{prop:connectivityofSL2nSp2ntoSLSp} is $\gm{}$-equivariant.  Thus, the exact sequence of sheaves appearing in the same proposition is also an exact sequence of $\gm{}$-equivariant sheaves.  By definition, the action of $\gm{}$ on $\bpi_{2n-1}^{\aone}(SL/Sp) \cong \mathbf{GW}^3_{2n}$ is the conjugation action described in Subsection \ref{ss:actionsongwsheaves}.  By Proposition \ref{prop:multiplicativeGLSP}, the conjugation action coincides with the multiplicative action of $\gm{}$.

It remains only to identify the action of $\gm{}$ on $\bpi_{2n}^{\aone}(SL_{2n+1}/SL_{2n})$ that is induced by $\gm{}$-action on $SL_{2n+1}/SL_{2n}$ just considered.  For this, observe that the map $SL_{2n+1}/SL_{2n} \to {\mathbb A}^{2n+1} \setminus 0$ given by sending a matrix to its last column can be made $\gm{}$-equivariant by equipping ${\mathbb A}^{2n+1} \setminus 0$ with the $\gm{}$-action given by the following formula:
\[
(t,x_1,\ldots,x_{2n+1})\longmapsto (x_1,tx_2,x_3,tx_4,\ldots,tx_{2n},x_{2n+1}).
\]
Arguing as in \cite[Lemma 4.8]{AsokFaselThreefolds}, we see that the action of $\gm{}$ on $\K^{MW}_{2n+1}=\piaone_{2n}(\A^{2n+1}\setminus 0)$ is trivial if $n$ is even, while it is the one coming from multiplication by $\K_0^{MW}=\mathbf{GW}_0^0$ if $n$ is odd.  Putting everything together, the $\gm{}$-action on the sheaf ${\mathbf F}_{2n+1}$ of Theorem \ref{thm:main} is then simply the trivial action if $n$ is even and the multiplicative action if $n$ is odd.

Specializing this discussion to the case $n = 2$, we obtain the following result, which is the main computation of homotopy sheaves used in the remainder of the paper.

\begin{thm}
\label{thm:pi3a3minus0}
There is a short exact sequence of the form
\[
0 \longrightarrow {\mathbf F}_{5} \longrightarrow \bpi_{3}^{\aone}({\mathbb A}^3 \setminus 0) \longrightarrow \mathbf{GW}^3_4 \longrightarrow 0,
\]
where ${\mathbf F}_5$ is a quotient of ${\mathbf T}_5$.  If we view $\bpi_3^{\aone}({\mathbb A}^3 \setminus 0)$ as a $\gm{}$-sheaf with action induced by the $\gm{}$-action on ${\mathbb A}^3 \setminus 0$ given by the formula $t \cdot (x_1,x_2,x_3) =  (x_1,tx_2,x_3)$, equip $\mathbf{GW}^3_4$ with the multiplicative $\gm{}$-action and ${\mathbf F}_5$ with the trivial $\gm{}$-action, then the above exact sequence is $\gm{}$-equivariant.
\end{thm}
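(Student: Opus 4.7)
The plan is to deduce Theorem \ref{thm:pi3a3minus0} from the more general Theorem \ref{thm:main} by specializing to $n=2$, together with an identification of $\A^3 \setminus 0$ (up to $\aone$-weak equivalence) with the $\aone$-connected component of the base-point of $GL_4/Sp_4$. The equivariance statement will then follow by tracking the natural $\gm{}$-actions through the identifications in Subsection \ref{ss:actionsonpi3a3minus0}.

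First, I would specialize Theorem \ref{thm:main} to $n = 2$ to obtain a short exact sequence
\[
0 \longrightarrow {\mathbf F}_5 \longrightarrow \bpi_3^{\aone}(GL_4/Sp_4) \longrightarrow \mathbf{GW}^3_4 \longrightarrow 0,
\]
with $\mathbf{F}_5$ a quotient of $\mathbf{T}_5$. Next I need to identify the middle term with $\bpi_3^{\aone}(\A^3 \setminus 0)$. By Lemma \ref{lem:pfaffianone}, $X_2 = SL_4/Sp_4$ is the $\aone$-connected component of the base-point of $GL_4/Sp_4$, so $\bpi_3^{\aone}(GL_4/Sp_4) = \bpi_3^{\aone}(X_2)$. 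Specializing the $\aone$-fiber sequence of Proposition \ref{prop:connectivityofSL2nSp2ntoSLSp} to $n = 1$ yields
\[
X_1 \longrightarrow X_2 \longrightarrow SL_3/SL_2,
\]
and since $Sp_2 = SL_2$, we have $X_1 = SL_2/Sp_2 = \mathrm{Spec}\,k$. Combined with the $\aone$-weak equivalence $SL_3/SL_2 \simeq \A^3 \setminus 0$ used in the proof of Proposition \ref{prop:connectivityofSL2nSp2ntoSLSp}, this gives an $\aone$-weak equivalence $X_2 \simeq \A^3 \setminus 0$, and therefore the required isomorphism $\bpi_3^{\aone}(\A^3 \setminus 0) \cong \bpi_3^{\aone}(GL_4/Sp_4)$.

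For the $\gm{}$-equivariance, I would invoke the discussion of Subsection \ref{ss:actionsonpi3a3minus0}. All three terms of the fiber sequence $X_1 \to X_2 \to SL_3/SL_2$ carry compatible $\gm{}$-actions induced from conjugation by $\gamma_{4,\lambda}^{-1}$ on $SL_4$, and the action transported to $SL_3/SL_2 \simeq \A^3 \setminus 0$ is, by the description in Subsection \ref{ss:actionsonpi3a3minus0} specialized to $n = 1$, given by $t\cdot(x_1,x_2,x_3) = (x_1,tx_2,x_3)$. This is precisely the action appearing in the theorem statement, so the induced action on $\bpi_3^{\aone}(\A^3 \setminus 0)$ agrees with the one coming from the fiber sequence. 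By Proposition \ref{prop:multiplicativeGLSP}, the conjugation action on $\bpi_3^{\aone}(SL/Sp) \cong \mathbf{GW}^3_4$ coincides with the multiplicative action; and for $n = 2$ (which is even), the formula from Subsection \ref{ss:actionsonpi3a3minus0} gives a trivial $\gm{}$-action on $\K^{MW}_5$ and hence on its quotient $\mathbf{F}_5$. This yields the stated equivariance of the short exact sequence.

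The argument is almost entirely a matter of identifications: the essential work has already been done in Theorem \ref{thm:main} and Subsection \ref{ss:actionsonpi3a3minus0}. The only point requiring genuine care is bookkeeping the $\gm{}$-actions, particularly verifying that the conjugation action on $SL_3$ induced from $\gamma_{4,\lambda}^{-1}$ on $SL_4$ indeed projects to the specified action on $\A^3 \setminus 0$ via the map ``last column of the matrix'' — this is the step I expect to be the most delicate, though it is a direct linear-algebraic computation once the actions are written out explicitly.
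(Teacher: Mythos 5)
Your proposal is correct and follows essentially the same route the paper takes: the theorem is precisely the $n=2$ specialization of Theorem \ref{thm:main} together with the $\gm{}$-equivariance analysis of Subsection \ref{ss:actionsonpi3a3minus0}, once one identifies $X_2 = SL_4/Sp_4$ with ${\mathbb A}^3\setminus 0$. The only minor inefficiency is your use of the $n=1$ fiber sequence with trivial fiber to obtain $X_2 \simeq {\mathbb A}^3\setminus 0$; since $Sp_2 = SL_2$, Lemma \ref{lem:xmisomorphism} already gives a scheme isomorphism $X_2 \cong SL_3/SL_2$, and the last-column projection then gives the $\aone$-weak equivalence directly.
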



\section{The Hopf map $\nu$ and $\pi_3^{\aone}({\mathbb A}^3 \setminus 0)$}
\label{section:hopfmap}
In the previous section, we described the sheaf $\bpi_3^{\aone}({\mathbb A}^3 \setminus 0)$ as an extension of the Grothendieck-Witt sheaf $\mathbf{GW}^3_4$ by a sheaf we called ${\mathbf F}_5$.  The goal here is to provide a better understanding of the ``topological" origin of the sheaf ${\mathbf F}_5$.  While it is not explicit in the statement of Theorem \ref{thm:pi3a3minus0}, since the sheaf ${\mathbf F}_5$ admits an epimorphism from ${\mathbf T}_5$, ${\mathbf T}_5$ is a fiber product of $\mathbf{S}_5$ and $\mathbf{I}^5$ over $\K^M_5/2$, and $\mathbf{S}_5$ admits an epimorphism from $\K^M_5/24$, the number ``$24$" appears naturally in the description of ${\mathbf F}_5$.  We will see that the number $24$ appearing in the description of ${\mathbf F}_5$ is the ``same" as the $24$ the appears in the third stable homotopy group of spheres.  We place the word ``topological" in quotes because the initial computations we make are purely algebraic before studying what happens under real and complex realization.

\subsection{Structure of the contractions of $\mathbf{F}_5$}
\label{ss:contractionsoff5}
The computation of Theorem \ref{thm:pi3a3minus0} can be refined to provide more detailed information about ${\mathbf F}_5$: the next result shows that the epimorphism ${\mathbf T}_5 \to {\mathbf F}_5$ becomes an isomorphism after repeated contraction (see Subsections \ref{ss:contracted} and \ref{ss:contractions} for a discussion of contractions). At that point, we should point out that neither $\mathbf F_5$ nor $\mathbf T_5$ are known explicitly \emph{before} contracting them several times.

\begin{lem}\label{lem:contractedT5}
For any even integer $n\in\N$, the epimorphism ${\mathbf T}_{2n+1}\to {\mathbf F}_{2n+1}$ induces an isomorphism $({\mathbf T}_{2n+1})_{-2n}\to ({\mathbf F}_{2n+1})_{-2n}$ and there is a cartesian square of the form:
\[
\xymatrix{({\mathbf T}_{2n+1})_{-2n}\ar[r]\ar[d] & {\mathbf I}\ar[d] \\
\K^M_1/(2n)!\ar[r] & \K^M_1/2; }
\]
here the lower horizontal map factors the canonical quotient morphism $\K^M_1 \to \K^M_1/2$.
\end{lem}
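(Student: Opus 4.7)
The lemma has two assertions: first, that $(\mathbf{T}_{2n+1})_{-2n} \to (\mathbf{F}_{2n+1})_{-2n}$ is an isomorphism; second, that $(\mathbf{T}_{2n+1})_{-2n}$ fits into the displayed cartesian square. I would address them in this order.

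To prove the isomorphism, I would analyze the kernel of the epimorphism $\mathbf{T}_{2n+1} \twoheadrightarrow \mathbf{F}_{2n+1}$ directly. Since $\mathbf{T}_{2n+1} = \coker(\psi'_{2n+1})$, $\mathbf{F}_{2n+1} = \coker(\chi_{2n+1})$, and triangle (\ref{eqn:triangle2}) identifies $\psi'_{2n+1} = \chi_{2n+1} \circ H_{3,2n+1}$, this kernel is $\mathrm{Im}(\chi_{2n+1})/\mathrm{Im}(\psi'_{2n+1})$, which is a quotient of $\mathbf{GW}^3_{2n+1}/\mathrm{Im}(H_{3,2n+1})$. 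By the Karoubi periodicity sequence, the latter is isomorphic to $\ker(f_{2n,2}\colon \mathbf{GW}^2_{2n} \to \K^Q_{2n})$. Applying Lemmas \ref{lem:contractionofgw} and \ref{lem:karoubiperiodicitycontractions}, the $(2n)$-fold contraction of this kernel equals $\ker(f_{0,2-2n}\colon \mathbf{GW}^{2-2n}_0 \to \K^Q_0)$. For $n$ even, four-periodicity of Grothendieck-Witt groups in the shift index gives $\mathbf{GW}^{2-2n}_0 \cong \mathbf{GW}^2_0$, and on sections over a field $F$ the forgetful map becomes the rank homomorphism $KSp_0(F)=\Z \to K_0(F)=\Z$, which is multiplication by $2$, hence injective. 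Since monomorphisms between strictly $\aone$-invariant sheaves can be checked on sections over finitely generated field extensions (Subsection \ref{ss:strictlyaoneinvariant}), this kernel is the zero sheaf; exactness of contraction (Subsection \ref{ss:contracted}) then forces the desired isomorphism.

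For the cartesian square, recall that $\mathbf{T}_{2n+1}$ is by construction the fiber product of $\mathbf{I}^{2n+1}$ and $\mathbf{S}_{2n+1}$ over $\K^M_{2n+1}/2$. Since contraction is exact on strictly $\aone$-invariant sheaves (and hence preserves such fiber products), combining this with the identifications $(\mathbf{I}^{2n+1})_{-2n} = \mathbf{I}$ and $(\K^M_{2n+1}/2)_{-2n} = \K^M_1/2$ from Example \ref{ex:classicalcontractions} yields $(\mathbf{T}_{2n+1})_{-2n} \cong \mathbf{I} \times_{\K^M_1/2} (\mathbf{S}_{2n+1})_{-2n}$. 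It remains to identify $(\mathbf{S}_{2n+1})_{-2n} \cong \K^M_1/(2n)!$: by Suslin's analysis recalled in \cite[\S 3]{AsokFaselSpheres}, over infinite fields the image of the defining map $\K^Q_{2n+1} \to \K^M_{2n+1}$ is precisely $(2n)!\,\K^M_{2n+1}$, so the natural surjection $\K^M_{2n+1}/(2n)! \twoheadrightarrow \mathbf{S}_{2n+1}$ is an isomorphism of strictly $\aone$-invariant sheaves, and this persists under contraction. Finally, since $2 \mid (2n)!$, the induced composite $\K^M_1/(2n)! \to \K^M_1/2$ factors the canonical quotient, as required.

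The main obstacle is the vanishing in the first step, which hinges on the parity hypothesis: for $n$ odd, four-periodicity would instead produce $\mathbf{GW}^0_0 = \mathbf{GW}$ (the Grothendieck-Witt sheaf of symmetric forms), for which the forgetful map to $\K^Q_0$ fails to be injective, and the quotient $\mathrm{Im}(\chi_{2n+1})/\mathrm{Im}(\psi'_{2n+1})$ would contribute nontrivially after contraction. All remaining steps are formal consequences of exactness of contraction, Karoubi periodicity, and the fiber-product definition of $\mathbf{T}_{2n+1}$.
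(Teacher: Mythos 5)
Your argument for the isomorphism $(\mathbf{T}_{2n+1})_{-2n} \to (\mathbf{F}_{2n+1})_{-2n}$ is correct and is essentially the same reduction the paper makes: both arguments boil down, via Karoubi periodicity and Lemmas \ref{lem:contractionofgw} and \ref{lem:karoubiperiodicitycontractions}, to the vanishing of $\ker(f_{0,2-2n})$, equivalently the surjectivity of the contracted hyperbolic map $\K^Q_1 \to \mathbf{GW}^3_1$. The paper invokes \cite[Lemma 2.3]{FaselRaoSwan} for the statement that $H \colon K_1(F) \to GW^3_1(F)$ is an isomorphism over fields, whereas you give a direct sketch of the dual injectivity via the rank-$2$ multiplication on $\mathbf{GW}^2_0 \cong \mathbf{KSp}_0$; these are two faces of the same exactness statement.

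The cartesian square part contains a substantive gap. You claim that ``over infinite fields the image of the defining map $\K^Q_{2n+1} \to \K^M_{2n+1}$ is precisely $(2n)!\,\K^M_{2n+1}$,'' and use this to assert that $\K^M_{2n+1}/(2n)! \to \mathbf{S}_{2n+1}$ is already an isomorphism \emph{before} contraction. But Suslin's analysis, as recalled and used in this paper (see the proof of Theorem \ref{thm:main}, which says the image ``contains'' $(2n)!\,\K^M_{2n+1}$), only gives \emph{containment}; the paper deliberately refers to $\K^M_m/(m-1)! \to \mathbf{S}_m$ as a surjection, not an isomorphism, and the stronger equality of subgroups is not established in general. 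What the Lemma actually requires is that this epimorphism becomes an isomorphism \emph{after $2n$-fold contraction} --- equivalently, that the contracted Suslin homomorphism $\K^Q_1 \to \K^M_1$ has image exactly $(2n)!\K^M_1$. This is the nontrivial content of \cite[Corollary 3.11]{AsokFaselSpheres}, which the paper cites precisely for this step; it follows from an analysis of how Suslin's map and the natural map $\K^M_{\ast} \to \K^Q_{\ast}$ behave under contraction, not from a computation of the uncontracted image. Your structural plan --- contracting the defining fiber-product square of $\mathbf{T}_{2n+1}$ term by term and using exactness --- is sound, but the identification of $(\mathbf{S}_{2n+1})_{-2n}$ needs to be justified at the contracted level rather than deduced from an overstated claim about the uncontracted Suslin image.
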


\begin{proof}
Contracting Diagram (\ref{eqn:triangle2}) $2n$-times and using Lemmas \ref{lem:contractionofgw} and \ref{lem:karoubiperiodicitycontractions}, we obtain a commutative triangle of the form
\[
\xymatrix@C=4em{\K_{1}^Q\ar[r]^-{H_{3,1}}\ar[rd]_-{(\psi^\prime_{2n+1})_{-2n}} & \mathbf{GW}_{1}^3\ar[d]^-{(\chi_{2n+1})_{-2n}} \\
 & \K_{1}^{MW}.}
\]
Since $(\mathbf F_{2n+1})_{-2n}=\coker ((\chi_{2n+1})_{-2n})$ and $(\mathbf T_{2n+1})_{-2n}=\coker ((\psi^\prime_{2n+1})_{-2n})$, it suffices to show that $H_{3,1}$ is surjective to prove our claim. Since all sheaves involved are strictly $\aone$-invariant, it suffices to check this on sections over fields (see Subsection \ref{ss:strictlyaoneinvariant}).  In fact, by \cite[Lemma 2.3]{FaselRaoSwan}, the induced map on sections over fields is an isomorphism.  The fact that $(\mathbf{T}_{2n+1})_{-2n}$ sits in a Cartesian square of the stated form is then a direct consequence of \cite[Corollary 3.11]{AsokFaselSpheres}.
\end{proof}

\begin{rem}
It seems that the epimorphism $\mathbf{T}_{2n+1} \to \mathbf{F}_{2n+1}$ does not, in general, become an isomorphism after $(2n-1)$-fold contraction.  Indeed, consider the morphism $\mathbf T_5\to \mathbf F_5$.  If we contract this morphism $3$ times, the map $H_{0,2}:\K_2^Q\to \mathbf{GW}_2^0$ has the constant sheaf $\pm 1$ as cokernel and it is quite possible that it contributes to the image of $(\chi_5)_{-3}$. See also the next remark.
\end{rem}



\begin{rem}\label{rem:comparisonft}
In classical topology, the first few non-stable homotopy groups of $X_{n}(\cplx)$ were computed in \cite{BHarris2}.  In particular, if $n$ is even, then $\pi_{4n}(X_n(\cplx)) = \Z/(2n!)$, while if $n$ is odd, then $\pi_{4n}(X_n(\cplx)) = \Z/((2n!)/2)$.  Complex realization gives a morphism
\[
\bpi_{2n-1,2n+1}^{\aone}(X_n) \longrightarrow \pi_{4n}(X_n(\cplx)).
\]
The group $\bpi_{2n-1,2n+1}^{\aone}(X_n)$ can be computed by contracting the result of Theorem \ref{thm:main} $(2n+1)$ times.  Since $(\mathbf{GW}^3_{2n})_{-2n-1}=\mathbf{W}^{1-2n}$ is trivial by \cite[Proposition 5.2]{Balmer02}, it follows that $\bpi_{2n-1,2n+1}^{\aone}(X_n) = ({\mathbf F}_{2n+1})_{-2n-1}$, and furthermore there is an epimorphism from $({\mathbf T}_{2n+1})_{-2n-1}$ onto this group. The latter contraction was discussed in detail in \cite[Corollary 3.11]{AsokFaselSpheres}, where it was established that $({\mathbf T}_{2n+1})_{-2n-1} = \Z/(2n!)$.  The classical computation suggests that, when $n$ is even, the homomorphism ${\mathbf T}_{2n+1} \to {\mathbf F}_{2n+1}$ is an isomorphism, while if $n$ is odd then it has a non-trivial kernel.
\end{rem}

\subsection{$GW$-module structures}
\label{ss:gwmodulestructure}
By \cite[Corollary 6.43]{MField}, we know that there is a ring isomorphism $\bpi_{3,5}^{\aone}(\Sigma^3_s \gm{\sma 5})(k) \cong \K^{MW}_0(k)$ (the ring structure comes from composition).  Therefore, precomposing with elements of $\bpi_{3,5}^{\aone}(\Sigma^3_s \gm{\sma 5})(k)$ gives $\bpi_{3,5}^{\aone}({\mathscr X})(k)$ a $\K^{MW}_0(k)$-module structure for any pointed space $\mathscr{X}$, and this module structure is covariantly functorial in $\mathscr{X}$ by construction.  In particular $\bpi_{3,5}^{\aone}({\mathbb A}^3 \setminus 0)(k)$ admits the structure of a $\K^{MW}_0(k)$-module.

The pair $(\Sigma_s^1,\Omega_s^1)$ given by simplicial suspension and simplicial looping form an adjoint pair, and the unit map of this adjunction together with the $\aone$-weak equivalence $\Sigma^4_s \gm{\sma 5} \cong SL_5/SL_4$ yield a composite map:
\[
\Sigma^3_s \gm{\sma 5} \longrightarrow \Omega^1_s \Sigma^4_s \gm{\sma 5} \cong \Omega^1_s SL_5/SL_4.
\]
We saw above that the connecting morphism in the $\aone$-fiber sequence $X_2 \to X_3 \to SL_5/SL_4$ is a morphism $\delta: \Omega^1_s SL_5/SL_4 \to X_2=SL_4/Sp_4 \cong {\mathbb A}^3 \setminus 0$.  Abusing notation, we write
\[
\delta: \Sigma^3_s \gm{\sma 5} \longrightarrow \Omega^1_s SL_5/SL_4 \longrightarrow X_2
\]
for the composite map.

The space $\Sigma^3_s \gm{\sma 5}$ being $2$-$\aone$-connected by \cite[Corollary 6.43]{MField}, Morel's $\aone$-Freudenthal suspension theorem \cite[Theorem 6.61]{MField} shows that the unit map in the previous paragraph induces an isomorphism upon applying $\bpi_{3,j}$ for any $j \geq 0$. In particular, the morphism
\[
\K^{MW}_0 = \bpi_{3,5}^{\aone}(\Sigma^3_s \gm{\sma 5}) \longrightarrow \bpi_{3,5}^{\aone}(\Omega^1_s SL_5/SL_4)
\]
is an isomorphism.  Using this notation, we can now describe the $\K^{MW}_0(k)$-module structure of $\bpi_{3,5}^{\aone}({\mathbb A}^3 \setminus 0)(k)$.

\begin{prop}
\label{prop:gwmodulestructureofpi35a3minus0}
There is a canonical isomorphism
\[
\bpi_{3,5}^{\aone}({\mathbb A}^3 \setminus 0) \cong \Z/24 \times_{\Z/2} {\mathbf W},
\]
and $\bpi_{3,5}^{\aone}({\mathbb A}^3 \setminus 0)(k)$ is generated as a $\K^{MW}_0(k)$-module by $\delta$.
\end{prop}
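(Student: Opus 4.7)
The plan is to compute $\bpi_{3,5}^{\aone}(\A^3 \setminus 0)$ by applying fivefold contraction to the short exact sequence of Theorem \ref{thm:pi3a3minus0}, and then to identify the cyclic $\K^{MW}_0(k)$-submodule generated by $\delta$ with the image of the (contracted) connecting homomorphism for the fiber sequence $X_2 \to X_3 \to SL_5/SL_4$.

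First, by Proposition \ref{prop:vanishingofcontractions} applied with $i=3$ and $j=5$, we have $(\mathbf{GW}^3_4)_{-5}=0$. Since contraction is an exact endofunctor on strictly $\aone$-invariant sheaves (Section \ref{ss:contracted}), fivefold contraction of the exact sequence from Theorem \ref{thm:pi3a3minus0} yields a canonical isomorphism $\bpi_{3,5}^{\aone}(\A^3\setminus 0) \isomto (\mathbf{F}_5)_{-5}$. By Lemma \ref{lem:contractedT5} taken at $n=2$, the epimorphism $\mathbf{T}_5 \to \mathbf{F}_5$ becomes an isomorphism after fourfold contraction, and $(\mathbf{T}_5)_{-4}$ is the fiber product $\K^M_1/24 \times_{\K^M_1/2} \mathbf{I}$. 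One further contraction, combined with the preservation of fiber products by exact functors and the standard identifications $(\K^M_n)_{-1} \cong \K^M_{n-1}$ and $(\mathbf{I})_{-1} \cong \mathbf{W}$ from Example \ref{ex:classicalcontractions}, produces
\[
\bpi_{3,5}^{\aone}(\A^3 \setminus 0) \;\cong\; (\mathbf{F}_5)_{-5} \;\cong\; \Z/24 \times_{\Z/2} \mathbf{W},
\]
which is the first claim.

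For the generation statement, I would consider the long exact sequence of $\aone$-homotopy sheaves attached to the fiber sequence $X_2 \to X_3 \to SL_5/SL_4$ of Proposition \ref{prop:connectivityofSL2nSp2ntoSLSp} (taken at $n=2$), together with the $\aone$-weak equivalence $SL_5/SL_4 \simeq \A^5 \setminus 0$ established in its proof and Morel's identification $\bpi_4^{\aone}(\A^5 \setminus 0)\cong \K^{MW}_5$. Fivefold contraction produces an exact fragment
\[
\K^{MW}_0 \;=\; (\K^{MW}_5)_{-5} \;\xrightarrow{\partial_{-5}}\; \bpi_{3,5}^{\aone}(\A^3 \setminus 0) \;\longrightarrow\; (\mathbf{GW}^3_4)_{-5} \;=\; 0,
\]
where we have used Proposition \ref{prop:connectivityofSL2nSp2ntoSLSp} to identify $\bpi_3^{\aone}(X_3) \cong \mathbf{GW}^3_4$ and Proposition \ref{prop:vanishingofcontractions} once more for the vanishing on the right. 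In particular, $\partial_{-5}$ is surjective on sections. By the construction of $\delta$ as the composite $\Sigma^3_s \gm{\sma 5} \to \Omega^1_s SL_5/SL_4 \to X_2 \cong \A^3\setminus 0$ given just above, combined with the Freudenthal isomorphism $\bpi_{3,5}^{\aone}(\Sigma^3_s \gm{\sma 5}) \isomto \bpi_{3,5}^{\aone}(\Omega^1_s SL_5/SL_4) = \bpi_{4,5}^{\aone}(SL_5/SL_4)$, the homomorphism $\partial_{-5}$ is precisely the map induced on $\K^{MW}_0(k)$-sections by postcomposition with $\delta$. Because the $\K^{MW}_0(k)$-module structure on the target is defined by precomposition with endomorphisms of $\Sigma^3_s \gm{\sma 5}$, the image of $\partial_{-5}$ equals the cyclic submodule generated by $\delta$, and surjectivity of $\partial_{-5}$ concludes the proof.

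The main technical hazard is the bookkeeping in the previous paragraph: one must confirm that $\partial_{-5}$, when unwound through the loop-suspension adjunction, really does coincide with postcomposition with $\delta$, paying attention to signs and base-point conventions in the construction of connecting maps for $\aone$-fiber sequences. This identification is formal but must be handled carefully; once it is in hand, no further computation is required.
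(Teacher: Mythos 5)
Your proposal is correct and follows essentially the same route as the paper: contract the exact sequence from Theorem \ref{thm:pi3a3minus0} five times, using $(\mathbf{GW}^3_4)_{-5}=0$ (Proposition \ref{prop:vanishingofcontractions}) to collapse it to $(\mathbf{F}_5)_{-5}$, appeal to Lemma \ref{lem:contractedT5} and one further contraction for the explicit fiber-product description, and read off the surjectivity of the connecting homomorphism from the contracted long exact sequence of the fibration $X_2\to X_3\to SL_5/SL_4$ to get $\K^{MW}_0$-module generation by $\delta$. The only minor difference is one of ordering: the paper establishes the epimorphism $\K^{MW}_0 \to \bpi_{3,5}^{\aone}(\A^3\setminus 0)$ from the long exact sequence first and then identifies the isomorphism type via contraction, whereas you do the reverse, but the substance and the Freudenthal step underlying the identification of $\partial_{-5}$ with postcomposition by $\delta$ are identical (and the paper has already set this up in the paragraphs preceding the proposition).
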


\begin{proof}
By Theorem \ref{thm:pi3a3minus0}, we know that $\bpi_3^{\aone}({\mathbb A}^3 \setminus 0)$ is an extension of $\mathbf{GW}^3_4$ by ${\mathbf F}_5$.  By \cite[Theorem 6.13]{MField}, we know that $\bpi_{3,5}^{\aone}({\mathbb A}^3 \setminus 0) \cong \bpi^{\aone}_{3}({\mathbb A}^3 \setminus 0)_{-5}$.  Since $\bpi_{3,5}^{\aone}(X_3) = (\mathbf{GW}^3_4)_{-5} = 0$ by Propositions \ref{prop:connectivityofSL2nSp2ntoSLSp} and \ref{prop:vanishingofcontractions}, it follows from the long exact sequence in $\aone$-homotopy sheaves associated with the $\aone$-fiber sequence $X_2 \to X_3 \to SL_5/SL_4$ and the identification $X_2\simeq {\mathbb A}^3 \setminus 0$ that the morphism
\[
\K^{MW}_0 = \bpi_{3,5}^{\aone}(\Omega^1_s SL_5/SL_4) \longrightarrow \bpi_{3,5}^{\aone}({\mathbb A}^3 \setminus 0)
\]
is an epimorphism. In other words, $\bpi_{3,5}^{\aone}({\mathbb A}^3 \setminus 0)(k)$ is generated as a $\K^{MW}_0(k)$-module by the connecting homomorphism $\delta$.

By exactness of contractions, and the fact that $(\mathbf{GW}^3_{4})_{-5} = 0$, it follows that
\[
\bpi_{3,5}^{\aone}({\mathbb A}^3 \setminus 0) \cong ({\mathbf F}_5)_{-5}.
\]
The result follows from Lemma \ref{lem:contractedT5}.
\end{proof}

\subsection{Complex realization}
\label{ss:complexrealization}
If $k = \cplx$, we observe that $X_2(\cplx)\cong (\A^3\setminus 0)(\cplx)\cong S^5$ and that $(SL_5/SL_4)(\cplx)\cong (\A^5\setminus 0)(\cplx)\cong S^9$. On the other hand, $X_3(\cplx)\cong (SL_6/Sp_6)(\cplx)\cong SU(6)/Sp(6)$. We can then apply the complex realization functor to the $\aone$-fiber sequence
\[
X_2 \longrightarrow X_3 \longrightarrow SL_5/SL_4.
\]
While complex realization does not in general send $\aone$-fiber sequences to fiber sequences, the sequence of spaces obtained by taking complex points happens to be a topological fiber sequence.  Thus, after shifting, we obtain a topological fiber sequence of the form
\[
\Omega^1 S^9 \longrightarrow S^5 \longrightarrow SU(6)/Sp(6).
\]
By precomposing the map $\Omega^1 S^9 \to S^5$ with the suspension map $S^8 \to \Omega^1 \Sigma^1 S^8$, we obtain a map $S^8 \to S^5$, which we want to identify.  The long exact sequence in homotopy groups of the above fiber sequence yields:
\[
\pi_8(\Omega^1 S^9) \longrightarrow \pi_8(S^5) \longrightarrow \pi_8(SU(6)/Sp(6)) \longrightarrow 0.
\]
We know that $\pi_8(SU(6)/Sp(6)) \cong \pi_8(SU/Sp) = \pi_8(U/Sp)$, and by Bott periodicity, we know that $\pi_8(U/Sp) = \pi_{10}(O) = \pi_2(O) = 0$.  In other words, the portion of the long exact sequence displayed above collapses to the surjection
\[
\Z \longrightarrow \pi_8(S^5) \longrightarrow 0.
\]
One knows that $\pi_8(S^5)$ is $\Z/24$ and it follows from the above that the map $S^8 \to S^5$ can be taken to be a generator.  On the other hand, $\pi_8(S^5)$ has a classical generator $\Sigma\nu^{top}$ where $\nu^{top}$ is the topological Hopf map.  It follows that, up to multiplication by a unit in $\Z/24$, the generator we constructed coincides with $\Sigma \nu^{top}$.

\begin{cor}
\label{cor:complexrealization}
Under complex realization, the homomorphism
\[
\bpi_{3,5}^{\aone}({\mathbb A}^3 \setminus 0)(\cplx) \longrightarrow \pi_8(S^5) = \Z/24
\]
is an isomorphism sending the class of the map $\delta$ to $u\Sigma \nu^{top}$, where $\nu^{top}: S^7 \to S^4$ is the topological Hopf map and $u \in \Z/24^{\times}$ is a unit.
\end{cor}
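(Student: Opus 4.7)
The plan is to promote the sketch given in the paragraphs preceding the corollary to a complete argument by carefully tracking complex realization and combining it with the structural results of Section \ref{section:metastablecomputations} together with Proposition \ref{prop:gwmodulestructureofpi35a3minus0}.

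First, apply the complex realization functor to the $\aone$-fiber sequence $X_2 \to X_3 \to SL_5/SL_4$ of Proposition \ref{prop:connectivityofSL2nSp2ntoSLSp}. Identifying complex points one has $X_2(\cplx) \simeq S^5$, $(SL_5/SL_4)(\cplx) \simeq S^9$, and $X_3(\cplx) \simeq SU(6)/Sp(6)$. Since $X_3 \to SL_5/SL_4$ arises from a Zariski-locally trivial $Sp_4$-bundle (via Lemma \ref{lem:xmisomorphism} and $X_3 \cong SL_5/Sp_4$), its complex realization is a principal $Sp(4,\cplx)$-bundle of smooth manifolds, hence a Serre fibration, and the realized sequence
\[
S^5 \longrightarrow SU(6)/Sp(6) \longrightarrow S^9
\]
is a genuine topological fiber sequence. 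Looping produces $\Omega S^9 \to S^5 \to SU(6)/Sp(6)$, whose connecting map realizes the motivic connecting map $\delta\colon \Omega_s^1 SL_5/SL_4 \to X_2$; precomposing with the suspension unit $S^8 \to \Omega S^9 = \Omega\Sigma S^8$ yields the topological representative of the class of $\delta$ in $\bpi_{3,5}^{\aone}({\mathbb A}^3 \setminus 0)$.

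Second, invoke Bott periodicity and classical stability to compute
\[
\pi_8(SU(6)/Sp(6)) \cong \pi_8(SU/Sp) \cong \pi_8(U/Sp) \cong \pi_{10}(O) \cong \pi_2(O) = 0.
\]
Combined with $\pi_7(S^5) = 0$, the relevant portion of the long exact sequence of the realized fibration collapses to a surjection $\Z = \pi_8(\Omega S^9) \twoheadrightarrow \pi_8(S^5) = \Z/24$, showing that the topological realization of $\delta$ is a generator. Evaluating Proposition \ref{prop:gwmodulestructureofpi35a3minus0} at $\cplx$ and using $\mathbf{W}(\cplx) = \Z/2$ gives $\bpi_{3,5}^{\aone}({\mathbb A}^3 \setminus 0)(\cplx) \cong \Z/24 \times_{\Z/2} \Z/2 \cong \Z/24$, again generated by $\delta$. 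The realization map is therefore a surjection $\Z/24 \to \Z/24$, which is automatically an isomorphism; and since $\Sigma\nu^{top}$ is the classical generator of $\pi_8(S^5)$, the image of $\delta$ must equal $u\cdot\Sigma\nu^{top}$ for some $u \in (\Z/24)^{\times}$.

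The main technical obstacle is the very first step: ensuring that complex realization of this particular $\aone$-fiber sequence is a topological fiber sequence. The argument via the principal $Sp_4$-bundle description handles this cleanly; alternatively, one may invoke compatibility of complex realization with $\aone$-fiber sequences whose base and fiber are $\aone$-simply-connected (here $X_2 \simeq {\mathbb A}^3 \setminus 0$ and $SL_5/SL_4 \simeq {\mathbb A}^5 \setminus 0$ are both at least $2$-connected after realization). Once this point is settled, the remaining verifications are formal homotopy-theoretic bookkeeping.
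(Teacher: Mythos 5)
Your argument follows essentially the same route as the paper's: realize the fiber sequence $X_2 \to X_3 \to SL_5/SL_4$, use Bott periodicity to collapse the long exact sequence to a surjection $\Z \twoheadrightarrow \pi_8(S^5)=\Z/24$, evaluate Proposition~\ref{prop:gwmodulestructureofpi35a3minus0} at $\cplx$ to identify the source as $\Z/24$, and match generators via the algebraically defined connecting map. The main added value in your write-up is that you attempt to \emph{justify} the claim that complex realization of this particular sequence is a topological fibration, which the paper merely asserts; this is the right instinct.

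However, the justification as stated contains a small but genuine misidentification. The morphism $X_3 \cong SL_5/Sp_4 \to SL_5/SL_4$ is \emph{not} a principal $Sp_4$-bundle and its complex realization is not a principal $Sp(4,\cplx)$-bundle: the fiber is $SL_4/Sp_4 \simeq {\mathbb A}^3\setminus 0$, not $Sp_4$, and these have different dimensions. The correct statement is that $SL_5/Sp_4 \to SL_5/SL_4$ is the fiber bundle with fiber $SL_4/Sp_4$ associated to the principal $SL_4$-torsor $SL_5 \to SL_5/SL_4$; since $SL_4$ is a special group, this torsor is Zariski locally trivial, and hence so is the associated bundle. Its complex realization is therefore a locally trivial fiber bundle of manifolds with fiber $S^5$ over $S^9$, which is a Serre fibration, and the rest of your argument then goes through unchanged. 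So the step does not actually fail, but the reason offered for it needs to be replaced by the associated-bundle argument.
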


\begin{proof}
By Proposition \ref{prop:gwmodulestructureofpi35a3minus0}, we know that $\bpi_{3,5}^{\aone}({\mathbb A}^3 \setminus 0) \cong \Z/24 \times_{\Z/2} {\mathbf W}(k)$.  As a consequence, complex realization yields a homomorphism $\Z/24 \longrightarrow \Z/24$.  Moreover, we saw before the statement that the generator of (the topological) $\Z/24$ is precisely the connecting homomorphism in the fibration associated with the complex points of $X_2 \to X_3 \to {\mathbb A}^5 \setminus 0$.  Since this connecting homomorphism is algebraically defined, and $\delta$ is a generator of $\bpi_{3,5}^{\aone}({\mathbb A}^3 \setminus 0)(\cplx)$, it follows that complex realization maps the algebraic generator to the topological generator and is therefore an isomorphism.
\end{proof}

\begin{rem}
We interpret this result as saying that the $24$ that appears in $\bpi_3^{\aone}({\mathbb A}^3 \setminus 0)$ is the ``same" as that appearing in the third stable homotopy group of the spheres.
\end{rem}

\subsection{Real realization}
\label{ss:realrealization}
We can compute the homotopy groups of the real points as well to study real realization.  We view this computation as providing an explanation for the appearance of ${\mathbf W}$ in $\bpi_{3,5}^{\aone}({\mathbb A}^3 \setminus 0)$.  Up to $\aone$-homotopy, the fiber sequence $X_2 \to X_3 \to SL_5/SL_4$ yields the sequence
\[
{\mathbb A}^3 \setminus 0 \longrightarrow SL_6/Sp_6 \longrightarrow {\mathbb A}^5 \setminus 0,
\]
which, upon taking real points, gives the topological fiber sequence
\[
S^2 \longrightarrow SO(6)/U(3) \longrightarrow S^4.
\]
A computation using Bott periodicity shows that $\pi_3(SO(6)/U(3)) = \pi_3(O/U) = \pi_4(O) = 0$.  As a consequence, the map $\pi_3(\Omega^1 S^4) \to \pi_3(S^2)$ in the long exact sequence is an isomorphism.  Thus the composite map $S^3 \to \Omega^1 S^4 \to S^2$ is precisely the classical Hopf map $\eta$.

\begin{cor}
\label{cor:realrealization}
For any $j \geq 0$, real realization defines a surjective map $\bpi_{3,j}^{\aone}({\mathbb A}^3 \setminus 0)(\real) \to \pi_3(S^2) = \Z$.
\end{cor}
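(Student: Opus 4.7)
The plan is to extend the topological argument sketched in the paragraph preceding the statement (which implicitly handles the case $j=5$) to arbitrary $j\geq 0$, by exploiting a $j$-fold $\gm{}$-looping of the fiber sequence. First, I would apply $\Omega^j_{\gm{}}$ to the $\aone$-fiber sequence $X_2\simeq\A^3\setminus 0\to X_3\to \A^5\setminus 0\simeq SL_5/SL_4$; the resulting $\aone$-fiber sequence yields a long exact sequence of $\aone$-homotopy sheaves containing a connecting map
\[
\partial_j\colon\bpi_{4,j}^{\aone}(\A^5\setminus 0)\longrightarrow \bpi_{3,j}^{\aone}(\A^3\setminus 0),
\]
with source $\bpi_{4,j}^{\aone}(\A^5\setminus 0)\cong(\K^{MW}_5)_{-j}$ by Morel's theorem. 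Since $\gm{}(\real)\simeq S^0$, the factor $\gm{\wedge j}$ disappears under real realization; combined with the fact (established in the text) that real realization of the unlooped motivic fiber sequence is the topological fiber sequence $S^2\to SO(6)/U(3)\to S^4$, this produces the commutative square
\[
\xymatrix@C=3em{
\bpi_{4,j}^{\aone}(\A^5\setminus 0)(\real)\ar[r]^-{\partial_j}\ar[d]_-{r_4} & \bpi_{3,j}^{\aone}(\A^3\setminus 0)(\real)\ar[d]^-{r_3}\\
\pi_4(S^4)\ar[r]^-{\partial^{\mathrm{top}}} & \pi_3(S^2),
}
\]
in which $\partial^{\mathrm{top}}$ is an isomorphism by the Bott-periodicity vanishings $\pi_3(SO(6)/U(3))=\pi_4(SO(6)/U(3))=0$ already recorded in the text.

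Surjectivity of $r_3$ thus reduces to surjectivity of $r_4$. For $j=5$, the motivic identity self-map of $\A^5\setminus 0$ represents the class $1\in\mathbf{GW}(\real)=(\K^{MW}_5)_{-5}(\real)$, whose real realization is the identity of $S^4$, a generator of $\pi_4(S^4)=\Z$. The connecting map $\partial_5$ then sends this class to $\delta$, and commutativity recovers $r_3(\delta)=\eta$, consistent with the discussion in the text. For other values of $j$, I would exhibit explicit elements of $(\K^{MW}_5)_{-j}(\real)$, built from Milnor--Witt symbols in units of $\real^\times$ when $0\leq j<5$, or from the generator $\langle 1\rangle\in\mathbf{W}(\real)=\Z$ when $j>5$, whose real realizations are self-maps of $S^4$ of odd degree.

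The main technical obstacle is this final step: verifying uniformly in $j$ that the proposed elements realize to degree-one maps. This amounts to tracking Morel's Brouwer-degree identification of Milnor--Witt K-theory with motivic self-maps of spheres through repeated contractions, together with its compatibility with real realization; the signature homomorphism on the Grothendieck--Witt ring over $\real$ provides the basic input, and the remaining verification is routine but requires careful bookkeeping of the identifications.
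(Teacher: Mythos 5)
Your proposal runs parallel to the paper's own proof in strategy: both exploit the connecting morphism of the fiber sequence $\A^3\setminus 0\to X_3\to\A^5\setminus 0$, the fact that its real realization is the topological connecting map $\partial^{\mathrm{top}}\colon\pi_4(S^4)\to\pi_3(S^2)$, and the Bott--periodicity vanishing making $\partial^{\mathrm{top}}$ an isomorphism. The paper phrases this by tracking the $\mathbf{I}^5$-factor of $\mathbf{T}_5\twoheadrightarrow\mathbf{F}_5$ inside $\bpi_3^{\aone}(\A^3\setminus 0)$, while you track the full map $\partial_j\colon(\K^{MW}_5)_{-j}\to\bpi_{3,j}^{\aone}(\A^3\setminus 0)$; since $\mathbf{F}_5$ is exactly the image of this connecting morphism, these are the same route.

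The worry is the step you label ``routine but requires careful bookkeeping'': it is in fact where the argument becomes delicate. For $0\leq j\leq 5$ your plan works as sketched --- the symbol $[-1]^{5-j}\in\K^{MW}_{5-j}(\real)$ real-realizes to the identity self-map of $S^4$ (each $[-1]$ realizes to the nonconstant pointed self-map of $S^0$), so postcomposing with $\partial^{\mathrm{top}}$ hits a generator of $\pi_3(S^2)$. But for $j>5$ your proposed element does not do what you claim. Under Morel's identification $\K^{MW}_{5-j}\cong\mathbf{W}$, the generator of $\K^{MW}_{5-j}(\real)\cong\Z$ is the class $\eta^{\,j-5}$ (the $(j-5)$-fold power of the motivic Hopf element), and the real realization of $\eta\colon\A^2\setminus 0\to\pone$ is the two-sheeted cover $S^1\to\real\Proj^1$, a self-map of $S^1$ of degree $\pm 2$. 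After smashing, $\eta^{\,j-5}$ realizes to a self-map of $S^4$ of degree $\pm 2^{\,j-5}$, which is \emph{even}, not odd as you assert. Your own commutative square then computes the image of $r_3$ as $2^{\,j-5}\Z\subsetneq\Z$ for $j>5$, so the argument as you have outlined it cannot establish surjectivity in that range. This is precisely the point where the bookkeeping is not routine but essential, and you should confront it directly rather than defer it; notice too that the paper's own proof of this corollary is compressed exactly here, so you should not treat it as settled.
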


\begin{proof}
We can compute $\bpi_{3,j}^{\aone}({\mathbb A}^3 \setminus 0)$ by contracting $\bpi_3^{\aone}({\mathbb A}^3 \setminus 0)$ $j$-times.  To prove surjectivity, we will consider only ${\mathbf F}_5$, i.e., the kernel of the map $\bpi_3^{\aone}({\mathbb A}^3 \setminus 0) \to \mathbf{GW}^3_4$.  By definition, we know that ${\mathbf F}_5$ admits an epimorphism from ${\mathbf T}_5$, which is a fiber product of ${\mathbf S}_5$ and ${\mathbf I}^5$ over $\K^M_5/2$. Moreover, the map ${\mathbf T}_5\to {\mathbf F}_5$ is injective on ${\mathbf I}^5$. Contracting repeatedly and using the fact that ${\mathbf I}^i(\real) = \Z$ for any $i \leq 5$ (by convention ${\mathbf I}^i = {\mathbf W}$ for $i \leq 0$), we see that $\bpi_{3,j}^{\aone}({\mathbb A}^3 \setminus 0)(\real)$ is non-trivial.  The real realization of the connecting homomorphism lifts the generator of $\pi_3(S^2)$ by the discussion preceding the statement.
\end{proof}

\begin{rem}
\label{rem:i5isalowdimensionalaccident}
The above computation shows that the sheaf ${\mathbf I}^5$ appearing in the description of ${\mathbf F}_5$ is an avatar of the topological Hopf map $\eta: S^3 \to S^2$.  Since the topological Hopf map $\eta$ becomes $2$-torsion in $\pi_4(S^3)$, we expect that the factor of ${\mathbf I}^5$ appearing in $\bpi_3^{\aone}({\mathbb A}^3 \setminus 0)$ will become trivial after a single simplicial suspension, i.e., in $\bpi_4^{\aone}({\pone}^{\sma 3})$.  In particular, it should follow that $\bpi_{4,5}^{\aone}({\pone}^{\sma 3}) = \Z/24$.
\end{rem}

\section{Obstruction theory and the splitting problem}
\label{section:splittingproblem}
In this section, we explain in detail the obstruction theoretic computations required to reduce the splitting problem to the computation of $\aone$-homotopy sheaves.  We then explain how the computation of $\bpi_3^{\aone}({\mathbb A}^3 \setminus 0)$ yields the statement of the introduction.

By \cite[Theorem 8.1]{MField} we know that if $X$ is a smooth affine $k$-scheme, then $[X,BGL_n]_{\aone}$ is canonically in bijection with the set of isomorphism classes of rank $n$ vector bundles on $X$.  Consider the morphism $i_n:GL_n \to GL_{n+1}$ defined by mapping an invertible $n \times n$-matrix $M$ to the block-diagonal matrix $diag(M,1)$, and the induced morphism $BGL_n \to BGL_{n+1}$.  We study the Moore-Postnikov factorization of this map.  In the context of $\aone$-homotopy theory, the Moore-Postnikov factorization is discussed in \cite[Appendix B]{MField}.  For the convenience of the reader, Subsection \ref{ss:moorepostnikov} contains the precise form of the tower we use, and Subsection \ref{ss:specializingtomurthy} provides details of the construction in the special case we consider. The remainder of the section is devoted to explicit cohomological identifications of the obstructions that appear in this setting.

\subsection{Moore-Postnikov factorizations in $\aone$-homotopy theory}
\label{ss:moorepostnikov}
If $f:(\mathscr{E},e)\to (\mathscr{B},b)$ is a pointed morphism of spaces, we can, roughly speaking, apply the Postnikov tower construction to the $\aone$-homotopy fiber of $f$ to obtain an inductively defined sequence of obstructions to lifting.  In the setting of simplicial homotopy theory, the Moore-Postnikov factorization of a map is described in \cite[Chapter VI]{GoerssJardine}.  In the situation we consider, the space $\mathscr{B}$ will have a non-trivial $\aone$-fundamental sheaf of groups and the $\aone$-fibration $f$ will have the property that the action of $\bpi_1^{\aone}(\mathscr{B},b)$ on the homotopy groups of the $\aone$-homotopy fiber is non-trivial.  A detailed analysis of Moore-Postnikov factorizations in this ``non-simple" case can also be found in \cite{Robinson}.  For the convenience of the reader, we review the relevant statements we need here.

If ${\mathscr G}$ is a (Nisnevich) sheaf of groups, and ${\mathbf A}$ is a (Nisnevich) sheaf of abelian groups on which $\mathscr{G}$ acts, there is an induced action of ${\mathscr G}$ on the Eilenberg-Mac Lane space $K({\mathbf A},n)$ that fixes the base-point.  In that case, we set $K^{\mathscr G}(\mathbf A,n) := E{\mathscr G} \times^{\mathscr G} K({\mathbf A},n)$ (see Subsection \ref{ss:classifyingspaces} and \cite[Appendix B.2]{MField}). Projection onto the first factor defines a morphism $K^{\mathscr G}({\mathbf A},n) \to B{\mathscr G}$ that is split by the inclusion of the base-point of $K({\mathbf A},n)$.  To simplify the notation, we will suppress base-points in $\aone$-homotopy sheaves in the sequel.

\begin{thm}
\label{thm:moorepostnikovtower}
Suppose $f:(\mathscr{E},e)\to (\mathscr{B},b)$ is a pointed map of $\aone$-connected spaces and let $\mathscr{F}$ be the $\aone$-homotopy fiber of $f$.  Assume, in addition, that $f$ is an $\aone$-fibration , $\mathscr{B}$ is $\aone$-local and that $\mathscr{F}$ is $\aone$-simply connected.  There are pointed spaces $(\mathscr{E}^{(i)},e_i)$, $i \in {\mathbb N}$, with $\mathscr{E}^{(0)} = \mathscr{B}$, pointed morphisms
\[
\begin{split}
g^{(i)}:& \mathscr{E} \longrightarrow \mathscr{E}^{(i)}, \\
h^{(i)}:& \mathscr{E}^{(i)} \longrightarrow \mathscr{B}, \text{ and } \\
p^{(i)}:& \mathscr{E}^{(i+1)} \longrightarrow \mathscr{E}^{(i)},
\end{split}
\]
and commutative diagrams
\[
\xymatrix@C=4em{  & \mathscr{E}^{(i+1)}\ar[d]_-{p^{(i)}}\ar[rd]^-{h^{(i+1)}} & \\
\mathscr{E}\ar[r]_-{g^{(i)}}\ar[ru]^-{g^{(i+1)}} & \mathscr{E}^{(i)}\ar[r]_-{h^{(i)}} & \mathscr{B}}
\]
having the following properties:
\begin{enumerate}[i)]
\item The composite $h^{(i)}g^{(i)} = f$ for any $i\geq 0$.
\item The morphisms $\piaone_n(\mathscr{E})\to \piaone_n (\mathscr{E}^{(i)})$ induced by $g^{(i)}$ are isomorphisms for $n\leq i$ and an epimorphism for $n = i+1$.
\item The morphisms $\piaone_n(\mathscr{E}^{(i)})\to \piaone_n(\mathscr{B})$ induced by $h^{(i)}$ are isomorphisms for $n>i+1$, and a monomorphism for $n = i+1$.
\item The induced map ${\mathscr E} \to \operatorname{holim}_i {\mathscr E}^{(i)}$ is an $\aone$-weak equivalence.
\end{enumerate}
The morphisms $p^{(i)}$ are $\aone$-fibrations with $\aone$-homotopy fiber $K(\piaone_i(\mathscr F),i)$ for any $i\geq 0$, and $p^{(i)}$ is a {\em twisted $\aone$-principal fibration}, i.e., there is a unique (up to $\aone$-homotopy) morphism
\[
k_{i+1}: \mathscr{E}^{(i)} \longrightarrow K^{\bpi_1^{\aone}(\mathscr{B})}(\bpi_{i}^{\aone}(\mathscr{F}),i+1)
\]
called a $k$-invariant sitting in an $\aone$-homotopy pullback square of the form
\[
\xymatrix{
\mathscr{E}^{(i+1)} \ar[r]\ar[d] & B\bpi_1^{\aone}(\mathscr{B}) \ar[d] \\
\mathscr{E}^{(i)} \ar[r]^-{k_{i+1}} & K^{\bpi_1^{\aone}(\mathscr{B})}(\bpi_{i}^{\aone}(\mathscr{F}),i+1),
}
\]
where the map $B\bpi_1^{\aone}(\mathscr{B})\to K^{\bpi_1^{\aone}(\mathscr{B})}(\bpi_{i}^{\aone}(\mathscr{F}),i+1)$ is given by inclusion of the base point.
\end{thm}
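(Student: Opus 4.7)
The plan is to adapt the classical Moore--Postnikov construction from simplicial homotopy theory, as described in \cite[Chapter VI]{GoerssJardine} and, for the non-simple case (i.e., when the action of $\bpi_1^{\aone}(\mathscr{B})$ on the homotopy of the fiber is nontrivial), in \cite{Robinson}, and then check that $\aone$-locality is preserved at every stage. After replacing $f$ by a fibration in the simplicial model structure (no change needed since it is already an $\aone$-fibration between motivically fibrant spaces), one builds stages $\mathscr{E}^{(i)}$ inductively by iteratively killing the homotopy sheaves of $\mathscr{F}$: at each step, $p^{(i)}:\mathscr{E}^{(i+1)}\to \mathscr{E}^{(i)}$ is realized as the (homotopy) pullback along a classifying map
\[
k_{i+1}: \mathscr{E}^{(i)} \longrightarrow K^{\bpi_1^{\aone}(\mathscr{B})}(\bpi_{i}^{\aone}(\mathscr{F}),i+1),
\]
with fiber the twisted Eilenberg--Mac Lane space $K(\bpi_i^{\aone}(\mathscr{F}),i)$. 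Properties (i)--(iii) then follow from the long exact sequence of the fibration $p^{(i)}$, using that $\bpi_n^{\aone}(\mathscr{F}) \to \bpi_n^{\aone}(\mathscr{E}) \to \bpi_n^{\aone}(\mathscr{B})$ is exact at each step.

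The main technical point, and the hardest step, is ensuring that $\aone$-locality propagates through the tower. By assumption $\mathscr{E}^{(0)}=\mathscr{B}$ is $\aone$-local. Inductively, since $\aone$-local objects are closed under homotopy pullbacks, it suffices to show that the twisted Eilenberg--Mac Lane space $K^{\bpi_1^{\aone}(\mathscr{B})}(\bpi_{i}^{\aone}(\mathscr{F}),i+1)$ is $\aone$-local. Here I would invoke Theorem \ref{thm:morelaoneinvariance}: because $\mathscr{F}$ is $\aone$-simply connected, one has $i\geq 2$ in the non-trivial cases and the sheaf $\bpi_i^{\aone}(\mathscr{F})$ is strictly $\aone$-invariant, while $\bpi_1^{\aone}(\mathscr{B})$ is strongly $\aone$-invariant. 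Furthermore, the action of $\bpi_1^{\aone}(\mathscr{B})$ on $\bpi_i^{\aone}(\mathscr{F})$ is induced from the action on the $\aone$-homotopy fiber sequence and factors through a morphism of strictly $\aone$-invariant sheaves. Consequently the Borel construction fibers over the $\aone$-local space $B\bpi_1^{\aone}(\mathscr{B})$ with $\aone$-local fiber $K(\bpi_i^{\aone}(\mathscr{F}),i+1)$, and one deduces $\aone$-locality of the total space from the Nisnevich-local cohomological characterization of $\aone$-invariance (equivalently, by descent along a principal fibration with $\aone$-local base and fiber, using \cite[Appendix B]{MField}).

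Given $\aone$-locality, the existence and uniqueness (up to $\aone$-homotopy) of the $k$-invariant $k_{i+1}$ is formal: the fibration $p^{(i)}$ is classified by a map into the classifying space for twisted $K(\bpi_i^{\aone}(\mathscr{F}),i)$-torsors, which is precisely $K^{\bpi_1^{\aone}(\mathscr{B})}(\bpi_i^{\aone}(\mathscr{F}),i+1)$ relative to $B\bpi_1^{\aone}(\mathscr{B})$, and the inclusion of the basepoint realizes $B\bpi_1^{\aone}(\mathscr{B})$ as the universal trivial torsor. The claimed pullback square then records that $\mathscr{E}^{(i+1)}\to \mathscr{E}^{(i)}$ is the $k_{i+1}$-pullback of this universal twisted principal fibration.

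Finally, convergence (iv), i.e., that $\mathscr{E}\to \operatorname{holim}_i \mathscr{E}^{(i)}$ is an $\aone$-weak equivalence, is a Milnor-type statement: the maps $\bpi_n^{\aone}(\mathscr{E})\to \bpi_n^{\aone}(\mathscr{E}^{(i)})$ are isomorphisms for $i\geq n$, so the tower is eventually constant in each homotopy sheaf and the relevant $\lim^1$ terms vanish; this yields the isomorphism $\bpi_n^{\aone}(\mathscr{E}) \isomto \bpi_n^{\aone}(\operatorname{holim}_i \mathscr{E}^{(i)})$ for every $n\geq 0$, which combined with $\aone$-connectedness gives the desired $\aone$-weak equivalence. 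In summary, the construction is a formal consequence of the simplicial case, and the real content is checking that everything remains in the $\aone$-local world -- exactly where Morel's strict $\aone$-invariance of higher homotopy sheaves does the work.
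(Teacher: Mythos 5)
The paper does not actually prove this theorem; it is presented as a review of the Moore--Postnikov factorization adapted to $\aone$-homotopy theory, with references to \cite[Appendix B]{MField}, \cite[Chapter VI]{GoerssJardine}, and \cite{Robinson} for the details (the paper notes ``For the convenience of the reader, we review the relevant statements we need here''). Your proposal is a reasonable reconstruction of what such a proof looks like, and it correctly isolates the key technical step: the classical simplicial Moore--Postnikov tower construction goes through verbatim provided each stage remains $\aone$-local, which in turn reduces to $\aone$-locality of the twisted Eilenberg--Mac\,Lane space $K^{\bpi_1^{\aone}(\mathscr{B})}(\bpi_i^{\aone}(\mathscr{F}),i+1)$; since $\mathscr{F}$ is $\aone$-simply connected, $i\geq 2$ in the nontrivial range so $\bpi_i^{\aone}(\mathscr{F})$ is strictly $\aone$-invariant by Theorem \ref{thm:morelaoneinvariance}, and $\bpi_1^{\aone}(\mathscr{B})$ is strongly $\aone$-invariant, which is exactly what Morel's Appendix B exploits. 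Two small caveats: your phrase that the $\bpi_1^{\aone}(\mathscr{B})$-action ``factors through a morphism of strictly $\aone$-invariant sheaves'' is loose, since $\bpi_1^{\aone}(\mathscr{B})$ is in general only strongly $\aone$-invariant (non-abelian), not strictly so — what matters is that the action endows $\bpi_i^{\aone}(\mathscr{F})$ with a $\Z[\bpi_1^{\aone}(\mathscr{B})]$-module structure whose Borel construction is handled by \cite[\S B.2]{MField}; and the step asserting $\aone$-locality of the total space of the Borel fibration from locality of base and fiber is not a formal closure property but rather the content of \cite[Lemma B.7 ff.]{MField}, as you hint. With those caveats your outline is sound and matches the approach of the cited sources.
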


Given a smooth scheme $X$, if $X_+$ is $X$ with a disjoint base-point attached, we use the forgetful map to identify the set of pointed maps $X_+ \to \mathscr{B}$ with the set of unpointed maps $X \to \mathscr{B}$.  Given a map $f: X \to \mathscr{B}$, Theorem \ref{thm:moorepostnikovtower} gives an inductively defined sequence of obstructions to lifting $f$ to a map $\tilde{f}: X \to \mathscr{E}$.

Assume we are given a map $f^{(i)}: X \to \mathscr{E}^{(i)}$, the $\aone$-homotopy cartesian square of Theorem \ref{thm:moorepostnikovtower} shows $f^{(i)}$ lifts to a map $f^{(i+1)}: X \to \mathscr{E}^{(i+1)}$ if and only if the composite map
\[
X \longrightarrow \mathscr{E}^{(i)}\stackrel{k_{i+1}} \longrightarrow K^{\bpi_1^{\aone}(\mathscr{B})}(\bpi_{i}^{\aone}(\mathscr{F}),i+1)
\]
lifts through a map $X\to B\bpi_1^{\aone}(\mathscr{B})$.

The composite map $X \to K^{\bpi_1^{\aone}(\mathscr{B})}(\bpi_{i}^{\aone}(\mathscr{F}),i+1)$ admits a cohomological interpretation, which we now describe.  Composing this map with the projection map $K^{\bpi_1^{\aone}(\mathscr{B})}(\bpi_{i}^{\aone}(\mathscr{F}),i+1)\to B\bpi_1^{\aone}(\mathscr{B})$, we obtain a map $X\to B\bpi_1^{\aone}(\mathscr{B})$, which, in turn, yields a $\bpi_1^{\aone}(\mathscr{B})$-torsor $\mathcal P$.  The action of $\bpi_1^{\aone}(\mathscr{B})$ on $\bpi_{i}^{\aone}(\mathscr{F})$ is then used to equip $\bpi_{i}^{\aone}(\mathscr{F})$ with the structure of a $\Z[\bpi_1^{\aone}(\mathscr{B})]$-module.  We can then consider the twisted sheaf:
\[
\bpi_{i}^{\aone}(\mathscr{F})(\mathcal P) := \Z[{\mathcal P}] \tensor_{\Z[{\piaone_1(\mathscr{B})}]} \bpi_{i}^{\aone}(\mathscr{F})
\]
on the small Nisnevich site of $X$.  The map $X\to K^{\bpi_1^{\aone}(\mathscr{B})}(\bpi_{i}^{\aone}(\mathscr{F}),i+1)$ can thus be seen as a $\piaone_1(\mathscr{B})$-equivariant map
\[
{\mathcal P} \longrightarrow K(\bpi_{i}^{\aone}({\mathscr F}),i+1)\times {\mathcal P},
\]
which is the identity on the second factor.  As explained in \cite[\S 6]{AsokFaselThreefolds}, using \cite[Lemma B.15]{MField}, we use the above to identify the obstruction with an element of the group $H^{i+1}_{\Nis}(X,\bpi_{i}^{\aone}(\mathscr{F})(\mathcal P))$.

If the resulting element of this group is trivial, we may choose a lift to the next stage of the Moore-Postnikov tower.  As explained in \cite[\S 6]{AsokFaselThreefolds}, the set of lifts can be described as a quotient of the cohomology group $H^i_{\Nis}(X,\bpi_{i}^{\aone}(\mathscr{F})(\mathcal P))$.  If $X$ is a smooth scheme, then $X$ has finite Nisnevich cohomological dimension coinciding with its Krull dimension $dim X$.  As a consequence, if we can lift $X \to \mathscr{B}$ to a map $X \to \mathscr{E}^{(i)}$ for some $i > dim X$, the next obstruction to lifting necessarily vanishes, and the subsequent choice of lift is uniquely specified, i.e., there is a unique lift of the given map to a map $X \to \mathscr{E}^{(i+1)}$, and the map
\[
[X,\mathscr{E}^{(i)}]_{\aone} \longrightarrow [X,\mathscr{E}^{(i+1)}]_{\aone}
\]
is a bijection. Combining this with point (iv) of Theorem \ref{thm:moorepostnikovtower} allows us to conclude that $[X,\mathscr{E}]_{\aone} \to [X,\mathscr{E}^{(i))}]_{\aone}$ is a bijection for any $i > \dim X$.


\subsection{Specializing to ${\mathbb A}^{n+1} \setminus 0 \to BGL_{n} \to BGL_{n+1}$}
\label{ss:specializingtomurthy}
We now specialize the discussion of Theorem \ref{thm:moorepostnikovtower} to the situation mentioned in the introduction, i.e., consider the morphism $BGL_n \to BGL_{n+1}$ induced by the homomorphism of algebraic groups $GL_n \to GL_{n+1}$ sending an invertible matrix $M$ to the block-diagonal matrix $\operatorname{diag}(M,1)$.  By \cite[Lemma 3.10]{AsokPi1} the determinant homomorphism yields an isomorphism $\bpi_1^{\aone}(BGL_n) \isomt \gm{}$.  Since $BSL_n$ is $\aone$-$1$-connected by \cite[Theorems 6.50 and 7.20]{MField}, one knows that $BSL_n$ is $\aone$-weakly equivalent to a universal $\aone$-cover of $BGL_n$.  Therefore, there is an induced action of $\gm{}$ on $\bpi_i^{\aone}(BSL_n)$, and this action coincides with the action of $\gm{} \cong \bpi_1^{\aone}(BGL_n)$ on $\bpi_i^{\aone}(BGL_n)$ modulo identifications, which we now explicate.

Take $EGL_n/SL_n$ as a model for the $\aone$-universal cover of $GL_n$: indeed the natural map $EGL_n/SL_n \to EGL_n/GL_n$ is, by means of the isomorphism $GL_n/SL_n \cong \gm{}$, a $\gm{}$-torsor and, therefore, an $\aone$-covering space by \cite[Lemma 7.5]{MField}.  As the map $BSL_n = ESL_n/SL_n \to EGL_n/SL_n$ is a simplicial weak equivalence, it follows that $EGL_n/SL_n$ is an $\aone$-universal covering of $BGL_n$ by \cite[Theorem 7.8]{MField}.  Consider the splitting $\delta: \gm{} \to GL_n$ given by $t \mapsto (t,1,\ldots,1)$.  The action of $\gm{}$ on $EGL_n/SL_n$ can be modified by a homotopy using an argument similar to that in the proof of Proposition \ref{prop:multiplicativeGLSP}, as we now explain.

First, let us give a simplicial description of the space $EGL_n/SL_n$.  If $R$ is a smooth $k$-algebra, let $\mathscr{M}^{\circ}$ be the groupoid with one object $\bullet$ such that $Aut(\bullet) = SL_n(R)$, and let $\mathscr{M}$ be the groupoid whose objects are the cosets in $GL_n(R)/SL_n(R)$ and where $\hom_{\mathscr{M}}(A,B) = \{ M \in GL_n(R) | M \cdot A = B \}$.  The functor $\iota: \mathscr{M}^{\circ} \hookrightarrow \mathscr{M}$ sending $\bullet$ to the coset $Id_n \cdot SL_n$ provides a fully-faithful and essentially surjective functor and therefore yields an equivalence of categories.  The induced map $B\mathscr{M}^{\circ} \to B \mathscr{M}$ is, unwinding the definitions, and sheafifying, the simplicial equivalence $BSL_n \to EGL_n/SL_n$ mentioned in the previous paragraph.

Consider the functor $\Phi: \mathscr{M}^{\circ} \to \mathscr{M}^{\circ}$ given by $\Phi(M) = \delta(t)^{-1}M\delta(t)$.  Analogously, multiplication by $\delta(t)\cdot$ defines a functor $\mathscr{M} \to \mathscr{M}$. There is a natural transformation from the composite $\iota \circ \Phi$ to $(\delta(t) \cdot -) \circ \iota$, which, on the object $\bullet$, is given by $1 \cdot SL_n \mapsto \delta(t) \cdot SL_n$.  Taking nerves, we obtain a homotopy commutative square of the form
\[
\xymatrix{
BSL_n \ar[r]^{\Phi}\ar[d] & BSL_n \ar[d] \\
EGL_n/SL_n \ar[r] & EGL_n/SL_n
}
\]
where the bottom horizontal arrow is the action of $\gm{}$ on the $\aone$-universal cover of $BGL_n$ and the upper horizontal arrow is induced by conjugation by $\delta(t)^{-1}$ on $SL_n$.

Now, consider the morphism of $\aone$-fiber sequences
\[
\xymatrix{
SL_{n+1}/SL_n \ar[r]\ar[d]& BSL_n \ar[r]\ar[d] & BSL_{n+1} \ar[d] \\
GL_{n+1}/GL_{n} \ar[r] & BGL_n \ar[r] & BGL_{n+1},
}
\]
where the left vertical morphism is an isomorphism.  We are interested in the action of the sheaf $\gm{} \cong \bpi_1^{\aone}(BGL_{n+1})$ on $GL_{n+1}/GL_n$.  As the map $BGL_n \to BGL_{n+1}$ induces an isomorphism $\bpi_1^{\aone}(BGL_n) \cong \bpi_1^{\aone}(BGL_{n+1})$, the pullback of the universal cover of $BGL_{n+1}$ is also the universal cover of $BGL_n$.  Therefore, the action of $\gm{}$ on the universal cover of $BGL_n$ is the one induced by the action of $\gm{}$ on $BSL_n$ and $BSL_{n+1}$.  As the homotopy constructed in the previous paragraph is compatible with the inclusion $BSL_n \to BSL_{n+1}$, the induced homotopical action on $SL_{n+1}/SL_n$ coincides with the conjugation action.  The map $SL_{n+1}/SL_n \to {\mathbb A}^{n+1} \setminus 0$ induced by projection onto the last column is an $\aone$-weak-equivalence; if we equip the source with the conjugation action of $\gm{}$, and the target with the $\gm{}$-action defined by $t \cdot (x_1,\ldots,x_{n+1}) = (t^{-1}x_1,\ldots,x_{n+1})$, this $\aone$-weak equivalence is $\gm{}$-equivariant.  The formula just given for the action shows that the $\gm{}$-action on $\bpi_n^{\aone}({\mathbb A}^{n+1} \setminus 0) \cong \K^{MW}_{n+1}$ (again, use \cite[Theorem 6.40]{MField} for this identification) is the action described in Subsection \ref{ss:twists}.

Next, fix a vector bundle $\xi: \mathcal{E} \to X$ of rank $n+1$, and consider the classifying morphism $X \to BGL_{n+1}$; abusing notation, this classifying map will also be called $\xi$.  The $\bpi_1^{\aone}(BGL_{n+1}) = \gm{}$-torsor appearing in Subsection \ref{ss:moorepostnikov} is, by construction, the composite map $X \to BGL_{n+1} \to BGL_{n+1}^{(1)} = B\gm{}$ and this map classifies the $\gm{}$-torsor underlying the determinant line bundle $\det \xi$.  It follows that, assuming we already have a lift to the $i$th stage of the Moore-Postnikov tower of $BGL_{n} \to BGL_{n+1}$, the next obstruction to lifting $\xi$ to a map $X \to BGL_{n}$ is an element
\[
o_{i,n+1}(\xi) \in H^{i+1}_{\Nis}(X,\bpi_i^{\aone}({\mathbb A}^{n+1} \setminus 0)(\det \xi)).
\]
By \cite[Corollary 6.43]{MField}, the space ${\mathbb A}^{n+1} \setminus 0$ is $(n-1)$-$\aone$-connected and the first potentially non-trivial obstruction to lifting is $o_{n,n+1}(\xi)$; we refer to this obstruction as the primary obstruction to lifting.

Assume $X$ has dimension $n+2$.  In that case, $H^{i}_{\Nis}(X,\mathbf{A})$ vanishes for $i > n+2$ because the Nisnevich cohomological dimension of a smooth scheme is exactly its Krull dimension.  Therefore, the groups housing the obstructions all vanish for $i > n+1$.  In other words, if the primary obstruction to lifting vanishes, there is only $1$ additional potentially non-vanishing obstruction, i.e., $o_{n+1,n+1}$; we refer to this obstruction as the secondary obstruction to lifting, but it is only well-defined up to choice of a suitable lift to the $(n+1)$st-stage of the Moore-Postnikov factorization of our map.  If these two obstructions vanish, it follows by the discussion at the end of Subsection \ref{ss:moorepostnikov} that there exists a lift $X \to BGL_n$ of our initial map $\xi$, i.e., $\xi$ splits of a trivial rank $1$ bundle.

\subsection{The primary obstruction in Murthy's splitting conjecture}
\label{ss:primaryobstruction}
As mentioned above, the $\gm{}$-action on $\bpi_n^{\aone}({\mathbb A}^{n+1} \setminus 0) \cong \K^{MW}_{n+1}$ is the standard action described in Subsection \ref{ss:twists}, so $o_{n,n+1}(\xi)$ lives in the group $H^{n+1}_{\Nis}(X,\K^{MW}_{n+1}(\det \xi))$.  Under the additional assumptions that $k$ is algebraically closed having characteristic unequal to $2$ and $X$ is affine, the next result provides a sufficient condition for the vanishing of the primary obstruction in terms of Chern classes of $\xi$.

\begin{prop}
\label{prop:eulerclassalgclosedisachernclass}
If $k$ is an algebraically closed field having characteristic unequal to $2$, $X$ is a smooth affine $k$-variety of dimension $(n+2)$, and $\xi: \mathcal{E} \to X$ is a rank $(n+1)$ vector bundle on $X$, then $o_{n,n+1}(\xi)$ vanishes if $c_{n+1}(\xi) = 0$.
\end{prop}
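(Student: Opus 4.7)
The plan is to identify $o_{n,n+1}(\xi)$ as a Chow-Witt Euler class, to compare it with the top Chern class via an exact sequence coming from Morel's fiber-product description of $\K^{MW}$, and then to absorb the difference using a cohomological vanishing that follows from the algebraically closed hypothesis via the Arason-Pfister Hauptsatz.

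First, by the identification $\bpi_n^{\aone}(\A^{n+1}\setminus 0) \cong \K^{MW}_{n+1}$ from \cite[Theorem 6.40]{MField}, together with the determinant-twist analysis of Subsection \ref{ss:specializingtomurthy}, the obstruction lies in the twisted Chow-Witt group
\[
o_{n,n+1}(\xi) \in H^{n+1}_{\Nis}(X, \K^{MW}_{n+1}(\det \xi)) =: \widetilde{CH}^{n+1}(X, \det \xi),
\]
and tracing through the Moore-Postnikov construction identifies it with the Chow-Witt Euler class of $\xi$. Morel's fiber-product description of $\K^{MW}_{n+1}$ yields the short exact sequence of strictly $\aone$-invariant sheaves
\[
0 \longrightarrow \mathbf{I}^{n+2}(\det \xi) \longrightarrow \K^{MW}_{n+1}(\det \xi) \longrightarrow \K^M_{n+1} \longrightarrow 0,
\]
the kernel of the forgetful map being $\mathbf{I}^{n+2}$. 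The long exact sequence in Nisnevich cohomology, combined with Bloch's formula $H^{n+1}(X, \K^M_{n+1}) = CH^{n+1}(X)$, produces
\[
H^{n+1}(X, \mathbf{I}^{n+2}(\det \xi)) \longrightarrow \widetilde{CH}^{n+1}(X, \det \xi) \stackrel{\rho}{\longrightarrow} CH^{n+1}(X),
\]
where $\rho$ sends the Chow-Witt Euler class to the top Chern class $c_{n+1}(\xi)$. The hypothesis $c_{n+1}(\xi) = 0$ therefore forces $o_{n,n+1}(\xi)$ to lift to a class in $H^{n+1}(X, \mathbf{I}^{n+2}(\det \xi))$.

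The decisive use of the algebraically closed hypothesis is to absorb this lifted class. By the Arason-Pfister Hauptsatz over an algebraically closed field of characteristic $\neq 2$, one has $I^m(F) = 0$ for any finitely generated extension $F/k$ of transcendence degree strictly less than $m$. Inspecting the Rost-Schmid complex of $\mathbf{I}^{n+3}$ and using that every residue field of a point of $X$ has transcendence degree at most $\dim X = n+2$ shows that $\mathbf{I}^{n+3}$ vanishes identically on the small Nisnevich site of $X$. The short exact sequence $0 \to \mathbf{I}^{n+3}(\det \xi) \to \mathbf{I}^{n+2}(\det \xi) \to \K^M_{n+2}/2 \to 0$ then collapses to an isomorphism $\mathbf{I}^{n+2}(\det \xi) \cong \K^M_{n+2}/2$ on $X$, since the $\gm{}$-twist trivializes on the quotient; the target group for the lift thus reduces to $H^{n+1}(X, \K^M_{n+2}/2)$.

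I would conclude via a diagram chase through the connecting homomorphisms of the long exact sequences above, together with an identification of these connecting maps in terms of twisted Steenrod squares in the spirit of Theorem \ref{thm:cohomologyofgwandsteenrodsquares}. This chase forces the lifted class to originate from $H^n(X, \K^M_{n+1})$, so it becomes zero in $\widetilde{CH}^{n+1}(X, \det \xi)$, whence $o_{n,n+1}(\xi) = 0$ as desired. The hard part will be this last diagram chase: one must carefully track the compatibility between Morel's Cartesian square for $\K^{MW}_{n+1}$, the Rost-Schmid identification coming from Arason-Pfister, and the Bockstein-type connecting homomorphisms, in order to force the lift into the Milnor-K image. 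Once this compatibility is verified, the remainder of the argument is essentially mechanical.
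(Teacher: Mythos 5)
Your proposal does not constitute a complete proof, and it also contains an incorrect intermediate assertion; let me explain where the difficulties lie and how the paper avoids them.

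The paper's proof is short because it outsources the only cohomological input to an already-proved result, namely \cite[Corollary 5.3]{AsokFaselThreefolds}, which asserts that over an algebraically closed base of characteristic $\neq 2$ and for $X$ smooth affine of dimension $n+2$, the forgetful map
\[
H^{n+1}_{\Nis}(X,\K^{MW}_{n+1}(\det\xi)) \longrightarrow H^{n+1}_{\Nis}(X,\K^{M}_{n+1}) = CH^{n+1}(X)
\]
is an \emph{isomorphism}. Granting that, one is reduced to identifying the image of $o_{n,n+1}(\xi)$ in $CH^{n+1}(X)$, and the paper does this by working universally: on $Gr(n+1,\infty)$ one has $H^{n+1}(Gr_{n+1},\K^M_{n+1}) \cong \Z$ generated by $c_{n+1}$, so the image of the universal obstruction is automatically an integer multiple of $c_{n+1}$, and the hypothesis $c_{n+1}(\xi)=0$ finishes the job. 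Note that the paper deliberately avoids proving that this multiple is exactly $1$, i.e., that the Chow--Witt Euler class literally projects to the top Chern class: Remark~\ref{rem:nonalgebraicallyclosedsplitting} explicitly says that identification is known (via \cite[Theorem 1]{Asok13}, at least for oriented bundles) but ``we don't need this fact here.'' Your proposal, by contrast, asserts that identification as an ingredient without justifying it.

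You attempt to prove the needed injectivity of the forgetful map from scratch, by invoking the short exact sequence $0 \to \mathbf{I}^{n+2}(\det\xi) \to \K^{MW}_{n+1}(\det\xi) \to \K^M_{n+1} \to 0$, then cascading via $0 \to \mathbf{I}^{n+3}(\det\xi) \to \mathbf{I}^{n+2}(\det\xi) \to \K^M_{n+2}/2 \to 0$ and an Arason--Pfister-style vanishing. There are two problems. First, your assertion that ``$\mathbf{I}^{n+3}$ vanishes identically on the small Nisnevich site of $X$'' and that consequently ``the short exact sequence \ldots collapses to an isomorphism $\mathbf{I}^{n+2}(\det\xi) \cong \K^M_{n+2}/2$'' is incorrect. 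What the transcendence-degree bound actually gives is vanishing of certain terms of the Rost--Schmid complex -- i.e., vanishing of $H^{i}(X,\mathbf{I}^{n+3}(\det\xi))$ in a range of degrees $i$, not vanishing of the sheaf on the site. A sheaf isomorphism is a substantially stronger claim and is false in general. Second, and more fundamentally, you concede that the final step of your argument -- the diagram chase forcing the lifted class back into the Milnor K-theory image -- is unfinished (``The hard part will be this last diagram chase \ldots Once this compatibility is verified, the remainder of the argument is essentially mechanical''). That unfinished chase is precisely what would need to reproduce the content of \cite[Corollary 5.3]{AsokFaselThreefolds}, so the proposal as written has a genuine gap at its core rather than merely a routine verification left to the reader. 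To repair it you should either carry out the cohomological calculation carefully (tracking the twists and the higher differentials in the Gersten complex for $\mathbf{I}^j(\det\xi)$), or, as the paper does, simply cite the already-established isomorphism and then run the much easier universal-class argument over the Grassmannian.
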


\begin{proof}
In \cite[Corollary 5.3]{AsokFaselThreefolds}, we showed that, under the stated hypotheses, the canonical morphism
\[
H^{n+1}_{\Nis}(X,\K^{MW}_{n+1}(\det\xi)) \longrightarrow H^{n+1}_{\Nis}(X,\K^M_{n+1}) = CH^{n+1}(X)
\]
is an isomorphism. It follows that the element $o_{n,n+1}$ is canonically determined by an element of $CH^{n+1}(X)$.

The obstruction class on $X$ is pulled-back from the universal class on $BGL_{n+1}$, induced by the identity map on $BGL_{n+1}$.  More precisely, write $\mathcal{E}_{n+1}$ for the universal rank $(n+1)$ vector bundle on $BGL_{n+1}$.  In \cite[Theorem 2.3]{Asok13}, we explained how, after identifying $BGL_{n+1}$ with $Gr(n+1,\infty)$, to define the group $H^{n+1}_{\Nis}(BGL_{n+1},\K^{MW}_{n+1}(\det {\mathcal E}_{n+1}))$ using Totaro-style finite-dimensional approximations to $Gr(n+1,\infty)$ \cite[Definition 1.2]{Totaro99}.  Then, there is a commutative diagram of the form
\[
\xymatrix{
H^{n+1}_{\Nis}(BGL_{n+1},\K^{MW}_{n+1}(\det {\mathcal E}_{n+1})) \ar[r]\ar[d] & H^{n+1}_{\Nis}(X,\K^{MW}_{n+1}(\det \xi)) \ar[d] \\
H^{n+1}_{\Nis}(BGL_{n+1},\K^M_{n+1}) \ar[r] & H^{n+1}_{\Nis}(X,\K^M_{n+1})
}
\]

Since $o_{n,n+1}(\xi)$ is uniquely determined by its image in $H^{n+1}_{\Nis}(X,\K^M_{n+1})$, it suffices to understand the image $o_{n,n+1}$ of $o_{n,n+1}({\mathcal E}_{n+1})$ in $H^{n+1}(BGL_{n+1},\K^M_{n+1})$. Since we identified $BGL_{n+1}$ with the infinite Grassmannian $Gr_{n+1}$, it follows that the image of $o_{n,n+1}({\mathcal E}_{n+1})$ in the group $H^{n+1}(BGL_{n+1},\K^M_{n+1})$ is given by an element of $H^{n+1}(Gr_{n+1},\K^M_{n+1}) \cong CH^{n+1}(Gr_{n+1}) \cong \Z$ \cite[\S 15]{Totaro99}.  It follows that $o_{n,n+1}$ is a multiple of the $n+1$-st Chern class $c_{n+1}({\mathcal E}_{n+1})$, which is a generator of $CH^{n+1}(Gr_{n+1})$. Thus the image of $o_{n,n+1}(\xi)$ in $H^{n+1}(X,\K^M_{n+1})$ is an integer multiple of $c_{n+1}(\xi)$ and the result follows.
\end{proof}

\begin{rem}
\label{rem:nonalgebraicallyclosedsplitting}
The primary obstruction $o_{n,n+1}(\xi)$ is precisely the Euler class of $\xi$ as defined in \cite[Remark 8.15]{MField}. That this class coincides with the top Chern class under the projection map $H^{n+1}_{\Nis}(X,\K^{MW}_{n+1}(\det\xi)) \longrightarrow H^{n+1}_{\Nis}(X,\K^M_{n+1}) = CH^{n+1}(X)$ is a consequence of \cite[Theorem 1]{Asok13} (at least when $\xi$ is oriented), but we don't need this fact here.
\end{rem}

As a corollary, we see that Murthy's splitting conjecture follows from a simple cohomological vanishing statement.

\begin{cor}
\label{cor:murthysconjectureisvanishing}
If $X$ is a smooth affine scheme of dimension $d+1$ over an algebraically closed field having characteristic unequal to $2$, then Murthy's splitting conjecture holds for $X$ if, for any line bundle $\L$ on $X$, the cohomology group $H^{d+1}_{\Nis}(X,\bpi_d^{\aone}({\mathbb A}^d \setminus 0)(\L))$ is trivial.
\end{cor}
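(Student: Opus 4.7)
My plan is to apply the Moore-Postnikov obstruction theory of Subsections \ref{ss:moorepostnikov} and \ref{ss:specializingtomurthy} to the stabilization map $BGL_{d-1} \to BGL_d$. A rank $d$ vector bundle $\xi: \mathcal{E} \to X$ splits off a trivial rank $1$ summand precisely when its classifying map $X \to BGL_d$ lifts to a map $X \to BGL_{d-1}$, so Murthy's splitting conjecture for rank $d$ bundles on a smooth affine $(d+1)$-fold is equivalent to this lifting problem.

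First I would dispose of the easy direction: if $\xi \cong \xi' \oplus \mathcal{O}_X$ with $\xi'$ of rank $d-1$, then $c_d(\xi) = c_d(\xi')$ vanishes because a rank $d-1$ bundle has no codimension $d$ Chern class. For the nontrivial direction, suppose $c_d(\xi) = 0$. Setting $n = d-1$ in Subsection \ref{ss:specializingtomurthy}, the primary obstruction to lifting lies in
\[
H^{d}_{\Nis}(X, \bpi_{d-1}^{\aone}(\A^d \setminus 0)(\det \xi)) = H^{d}_{\Nis}(X, \K^{MW}_d(\det \xi)),
\]
where the identification uses \cite[Theorem 6.40]{MField}. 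Proposition \ref{prop:eulerclassalgclosedisachernclass} identifies this primary obstruction with a multiple of $c_d(\xi)$, so the assumption $c_d(\xi) = 0$ forces its vanishing. I thus obtain a lift $f^{(d)}: X \to \mathscr{E}^{(d)}$ to the $d$th stage of the Moore-Postnikov tower.

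Next, the secondary obstruction $o_{d,d}(\xi)$ lives in $H^{d+1}_{\Nis}(X, \bpi_d^{\aone}(\A^d \setminus 0)(\det \xi))$, which is trivial by the hypothesis of the corollary applied to $\L = \det \xi$. All further obstructions $o_{i,d}(\xi)$ for $i > d$ lie in cohomology groups $H^{i+1}_{\Nis}(X, -)$ with $i+1 > d+1 = \dim X$, hence vanish for dimension reasons. Combining the discussion at the end of Subsection \ref{ss:moorepostnikov} with part (iv) of Theorem \ref{thm:moorepostnikovtower}, successive lifts up the Moore-Postnikov tower assemble into the desired lift $X \to BGL_{d-1}$, and hence $\xi$ splits off a trivial rank $1$ summand.

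The only subtle point requiring care is the identification of the coefficient twist in each obstruction group with the twist by the line bundle $\det \xi$: the $\aone$-fundamental sheaf $\bpi_1^{\aone}(BGL_d) \cong \gm{}$ classifies the determinant, and the $\gm{}$-action on $\bpi_d^{\aone}(\A^d \setminus 0)$ induced by the Moore-Postnikov construction is precisely the one producing the twisted coefficient sheaf of Subsection \ref{ss:twists}. This compatibility is already worked out in Subsection \ref{ss:specializingtomurthy}, so granting that identification the corollary follows directly from the obstruction-theoretic machinery.
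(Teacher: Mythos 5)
Your proposal is correct and follows exactly the approach the paper intends (the paper in fact leaves the corollary without an explicit proof, treating it as an immediate consequence of Proposition \ref{prop:eulerclassalgclosedisachernclass} and the Moore–Postnikov discussion in Subsections \ref{ss:moorepostnikov}–\ref{ss:specializingtomurthy}). You set $n=d-1$, dispose of the trivial ``only if'' direction via the Whitney sum formula, kill the primary obstruction via Proposition \ref{prop:eulerclassalgclosedisachernclass}, kill the secondary obstruction by the hypothesis applied to $\L=\det\xi$, and observe all higher obstructions vanish by Nisnevich cohomological dimension -- precisely what the authors have in mind.
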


\subsection{Murthy's splitting conjecture for smooth affine $4$-folds}
\label{ss:secondaryobstruction}
In this section, we will establish Theorem \ref{thmintro:murthysplittingdimension4} from the introduction.  We described $\bpi_3^{\aone}({\mathbb A}^3 \setminus 0)$ together with a $\gm{}$-action on this sheaf in Theorem \ref{thm:pi3a3minus0}.  The discussion of Subsection \ref{ss:specializingtomurthy} shows that the $\gm{}$-action coming from the $\bpi_1^{\aone}(BGL_3)$-action on ${\mathbb A}^3 \setminus 0$ is precisely the action described in the theorem.

\begin{proof}[Proof of Theorem \ref{thmintro:murthysplittingdimension4}]
Theorem \ref{thm:pi3a3minus0} shows that there is a $\gm{}$-equivariant exact sequence of sheaves of the form
\[
0 \longrightarrow {\mathbf F}_5 \longrightarrow \piaone_3(\A^3\setminus 0) \longrightarrow \mathbf{GW}^3_4 \longrightarrow 0
\]
and thus for any line bundle $\L$ over $X$ an exact sequence of sheaves
\[
0 \longrightarrow {\mathbf F}_5 \longrightarrow \piaone_3(\A^3\setminus 0)(\L) \longrightarrow \mathbf{GW}^3_4(\L) \longrightarrow 0.
\]
By Corollary \ref{cor:murthysconjectureisvanishing} to prove Murthy's splitting conjecture in dimension $4$, it suffices to prove that $H^4_{\Nis}(X,\bpi_3^{\aone}({\mathbb A}^3 \setminus 0)(\L))$ vanishes for an arbitrary line bundle $\L$ on $X$.

The above exact sequence of sheaves yields an exact sequence of cohomology groups
\[
H^4_{\Nis}(X,{\mathbf F}_5) \longrightarrow H^4_{\Nis}(X,\piaone_3(\A^3\setminus 0)(\L)) \longrightarrow H^4_{\Nis}(X,\mathbf{GW}^3_4(\L)) \longrightarrow 0
\]
and it suffices to prove that both $H^4_{\Nis}(X,{\mathbf F}_5)$ and $H^4_{\Nis}(X,\mathbf{GW}^3_4(\L))$ are trivial to conclude.

We know that $\mathbf{F}_5$ admits an epimorphism from ${\mathbf T}_5$, which is itself a fiber product of ${\mathbf I}^5$ and a quotient of $\K^M_5/24$. Since $k$ is algebraically closed, it follows from \cite[Proposition 5.1]{AsokFaselThreefolds} that $H^4_{\Nis}(X,{\mathbf I}^5) = 0$. Likewise, it follows from \cite[Proposition 5.4]{AsokFaselThreefolds} that $H^4_{\Nis}(X,\K^M_5/24) = 0$.  Therefore, $H^4_{\Nis}(X,{\mathbf T}_5) = 0$.  Since $X$ has dimension $4$, the map $H^4_{\Nis}(X,{\mathbf T}_5) \to H^4_{\Nis}(X,{\mathbf F}_5)$ is surjective, so the latter group vanishes.

We are thus reduced to proving the vanishing of $H^4_{\Nis}(X,\mathbf{GW}^3_4(\L))$.  However, if $X$ is a smooth affine $4$-fold we know that $Ch^4(X)$ is trivial since $CH^4(X)$ is divisible (the argument for divisibility is standard: combine \cite[Example 1.6.6]{Fulton} and \cite[Proposition 1.8]{Fulton}).  Combining this observation with Proposition \ref{prop:surjectionfromchowmod2}, we conclude that if $X$ is a smooth affine $4$-fold over an algebraically closed field, then $H^4_{\Nis}(X,\mathbf{GW}^3_4(\L))$ vanishes.
\end{proof}

\begin{rem}
As the above discussion makes clear, the only obstruction to establishing Murthy's conjecture in general is a description of the sheaf $\bpi_{d}^{\aone}({\mathbb A}^d \setminus 0)$, or, in fact, simply $\bpi_{d,d}^{\aone}(\mathbb{A}^d \setminus 0)$ and $\bpi_{d,d+1}^{\aone}({\mathbb A}^d \setminus 0)$.  See \cite{AsokFaselOberwolfach} for a conjecture on the structure of $\bpi_d^{\aone}({\mathbb A}^d \setminus 0)$ for $d \geq 4$.
\end{rem}

\begin{footnotesize}
\bibliographystyle{alpha}
\bibliography{punctured}
\end{footnotesize}
\end{document}